\documentclass[10pt]{article}
\usepackage[a4paper, left=3cm, right=3cm, height=22cm]{geometry}
\usepackage{amsmath,amsfonts,amsthm,bbm, amssymb}
\usepackage[arrow,matrix]{xy}
\usepackage{enumerate}

\usepackage[usenames]{color}
\usepackage{amssymb}
\usepackage{ulem}
\usepackage[inline]{showlabels}

\theoremstyle{plain}
\numberwithin{equation}{section}
\newtheorem{thm}{Theorem}[section]
\newtheorem{cor}[thm]{Corollary}
\newtheorem{lem}[thm]{Lemma}
\newtheorem{prop}[thm]{Proposition}

\theoremstyle{definition}
\newtheorem{df}[thm]{Definition}
\newtheorem{exe}[thm]{Example}
\newtheorem{rmk}[thm]{Remark}

\newtheorem{con}[thm]{Convention}
\newtheorem{cl}[thm]{Claim}
	
\usepackage[explicit]{titlesec}

\titleformat{\subsubsection}
  [runin] 
  {\normalfont\normalsize\bfseries} 
  {\thesubsubsection.} 
  {0.5em} 
	{#1} 

\newcommand{\mb}{\mathbb}
\newcommand{\mf}{\mathfrak}
\newcommand{\ml}{\mathcal}





\newcommand{\V}{\mathbf{F}}



\newcommand{\G}{G}
\newcommand{\lw}{\langle} 
\newcommand{\rw}{\vert} 
\newcommand{\law}{\vert} 
\newcommand{\raw}{\rangle} 

\usepackage[pdfauthor={Jin Cao}, %
				pdftitle={Cdga}%
			dvips,colorlinks=true]{hyperref}
\hypersetup{
	bookmarksnumbered=true,
	linkcolor=black,
}
\newcounter{elno}

\begin{document}
\author{Jin Cao}
\title{Differential Graded Algebras Over Some Reductive Group}

\maketitle

\begin{abstract}
In this paper, we study the general properties of commutative differential graded algebras in the category of representations over a reductive algebraic group with an injective central cocharacter. Besides describing the derived category of differential graded modules over such an algebra, we also provide a criterion for the existence of a t-structure on the derived category together with a characterization of the coordinate ring of the Tannakian fundamental group of its heart.
\end{abstract}



\setcounter{tocdepth}{1}
\tableofcontents

\section{Introduction}
In \cite{KM}, Kriz and May develop a general theory about Adams graded commutative differential algebra (cdga) over $\mb{Q}$. More precisely, given an Adams cdga $A$, they consider the bounded derived  category $\ml{D}^f_{A}$ of Adams graded dg $A$-modules and show a lot of formal properties of $\ml{D}^f_{A}$. Assume that $A$ is cohomologically connected, which says that negative cohomological degree parts of $A$ vanish and zero-th cohomological degree part of $A$ is isomorphic to $\mb{Q}$. Then they show that $\ml{D}^f_{A}$ has a $t$-structure. Furthermore, they can use the reduced bar construction (Section \ref{bar con}) to describe the heart $\ml{H}^f_{A}$ of $\ml{D}^f_{A}$. We recall that the reduced bar construction $\bar{B}(A)$ is a differential graded Hopf algebra. Taking the first cohomological part will get a Hopf algebra $H^0(\bar{B}(A))$, which corresponds to a pro-affine group scheme $G_A$ over $\mb{Q}$ with $\mb{G}_m$ action. Then $\ml{H}^f_A$ is equivalent to the category of graded representations of $G_A$ in finite dimensional $\mb{Q}$ vector spaces.
\

Let us mention one application of the theory of Adams cdgas. Given any field $k$, Kriz and May take $A$ to be Bloch's cycle complex $\ml{N}_k$. When $\ml{N}_k$ is cohomologically connected (depending on $k$), $\ml{H}_{\ml{N}_k}^f$ will exist and coincide with an earlier construction of mixed Tate motives by Bloch and Kriz \cite{BK}. For the definition of Bloch's cycle complex $\ml{N}_k$, we refer to \cite{L}. Later Spitzweck \cite{S} defines an equivalence 
$$\theta_k: \ml{D}^f_{\ml{N}_k} \to DMT(k, \mb{Q})$$
for any field $k$, where $DMT(k, \mb{Q})$ is the full rigid tensor subcategory of Veovodsky's triangulated category of geometric motives generated by Tate objects. The precise definition of $DMT(k, \mb{Q})$ can be found in \cite{L}.
\

If we view Adams cdgas as cdgas in the category of representations over $\mb{G}_m$, the above example provides us a motivation to study motives by the general theory of cdgas over a reductive group. Using the theory of cdgas over $GL_2$,  we generalize Spitzweck' equivalence to the case of motives for an elliptic curve without complex multiplication in \cite{Cao}. 
\

We now state our main result together with the outline of this paper. Assume $G$ is a reductive group with an injective central cocharacter and $A$ is cdga over $G$ (Definition \ref{def of cdga over GL_2}). Like the case of Adams graded cdgas, we can also define the derived category of dg $A$-modules, denoted by $\mathcal{D}_{A}^{\G}$ and study their properties, which is the content of Section \ref{Basic definition} - Section \ref{min}. Furthermore if we assume that $A$ is cohomologically connected, the full subcategory $\mathcal{D}_{A}^{\G, f}$ of $\mathcal{D}_{A}^{\G}$ consisting of compact objects works well (Section \ref{tensor structure}). Then we have:
\

\begin{thm}
(Theorem \ref{existence of t-structure} and Theorem \ref{main thm for cdga over $\G$})
Suppose $A$ is cohomologically connected. There exists a non-degenerate $t$-structure on $\mathcal{D}_{A}^{\G, f}$ with heart $\mathcal{H}_{A}^{\G, f}$. Furthermore,
\begin{itemize}
\item
There is a functor:
$$\rho: D^{b}(\mathcal{H}_{A}^{\G, f}) \longrightarrow \mathcal{D}^{\G, f}_{A}.$$
\item
The functor $\rho$ constructed above is an equivalence of triangulated categories if and only if $A$ is $1$-minimal.
\end{itemize}
\end{thm}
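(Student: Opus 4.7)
My overall plan is to adapt the Kriz--May treatment of Adams cdgas from \cite{KM} to the reductive setting, viewing the injective central cocharacter of $\G$ as the replacement for the Adams grading. The argument then splits into three independent steps: constructing the $t$-structure, constructing the realization functor $\rho$, and characterizing when $\rho$ is an equivalence.

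For the $t$-structure, I would take $\mathcal{D}^{\G,f,\leq 0}_A$ and $\mathcal{D}^{\G,f,\geq 0}_A$ to consist of the objects whose underlying cohomology (as a complex of $\G$-representations) is concentrated in nonpositive, respectively nonnegative, degrees. The role of cohomological connectedness is that the naive truncations $\tau_{\leq n}$ and $\tau_{\geq n}$ of a dg $A$-module then inherit canonical dg $A$-module structures, since the parts of $A$ that would otherwise mix cohomological degrees across the truncation point have vanishing cohomology. The $t$-structure axioms reduce to their classical counterparts for complexes, with the $\G$-action carried along throughout, and non-degeneracy is immediate from the cohomological boundedness of compact objects established in earlier sections.

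For the realization functor $\rho$, I would use a Beilinson-style filtered construction on $\mathcal{D}^{\G,f}_A$, or equivalently invoke the natural dg enhancement that the category carries from its definition. Either approach produces a canonical $t$-exact triangulated functor $\rho: D^b(\mathcal{H}^{\G,f}_A) \to \mathcal{D}^{\G,f}_A$ restricting to the identity on the heart, as a formal consequence of the universal property of the bounded derived category of an abelian subcategory inside an ambient triangulated category carrying a $t$-structure.

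The deeper content is the equivalence criterion. In the easy direction, an equivalence forces $\mathrm{Hom}_{\mathcal{D}^{\G,f}_A}(X, Y[n])$ to agree with $\mathrm{Ext}^n_{\mathcal{H}^{\G,f}_A}(X, Y)$ for all heart objects $X, Y$ and all $n$; any generator of the minimal model of $A$ in cohomological degree $>1$ would contribute a Massey-product obstruction producing a discrepancy, so $A$ must be $1$-minimal. For the converse, I would combine the reduced bar construction $\bar B(A)$ with the $1$-minimality hypothesis to express any compact dg $A$-module as an iterated extension of shifted heart objects, and then check that $\rho$ is fully faithful via a bar-filtration spectral sequence that degenerates precisely because the minimal model has no generators in cohomological degree greater than one. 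I expect the principal technical obstacle to lie in this final step: one must ensure that the bar resolution of a compact object remains compact at each stage, and this is exactly where the hypothesis of an injective central cocharacter enters, as it bounds the range of weight strata appearing at each bar degree and keeps the relevant finiteness properties in force.
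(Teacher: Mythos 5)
The central step of your $t$-structure construction contains a genuine error. You define $\mathcal{D}^{\G,f,\leq 0}_A$ and $\mathcal{D}^{\G,f,\geq 0}_A$ using the cohomology of the dg $A$-module $M$ viewed as a complex of $\G$-representations, i.e., $H^n(M)$. The paper instead uses $H^n(qM)$ where $q = -\otimes_A \mathbb{Q}$ is base change along the augmentation $A \to \mathbb{Q}$ (equivalently, $q$ may be identified with the associated graded $gr^W_*$ of the weight filtration). These disagree: $A$ is cohomologically connected but may have nonzero cohomology in all positive degrees, so under your definition $A$ would fail to lie in $\mathcal{D}^{\G,f,\leq 0}_A$ even though it must be in the heart (it is the unit for the tensor product and a simple object of $\mathcal{H}^{\G,f}_A$). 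Relatedly, your assertion that naive truncations $\tau_{\leq n}M$ inherit a dg $A$-module structure because the offending parts of $A$ have vanishing cohomology does not hold: vanishing cohomology does not make those elements disappear. If $a \in A^1$ and $m \in (\tau_{\leq 0}M)^0 = \ker d^0$, the product $am \in M^1$ must lie in $(\tau_{\leq 0}M)^1 = 0$, which generically fails. The paper circumvents this by truncating the \emph{generating data} of a cell module rather than the underlying complex (Lemma \ref{app}): after passing to a cell replacement $Q$ and decomposing $d_Q = d_Q^0 + d_Q^+$ using the weight grading, one retains only the generators of nonpositive cohomological degree plus a carefully chosen kernel at degree zero, and connectedness of $A$ (after replacing $A$ by a minimal model) is what makes this subobject stable under $d_Q$. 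Conservativity of $q$ then reduces checking axioms to the $\mathbb{Q}$-coefficients case.

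Beyond the $t$-structure, your sketch of $\rho$ (via a universal property or dg enhancement) is plausible in spirit but much less explicit than the paper's construction, which builds $\rho$ directly on bounded complexes of minimal heart objects using connection matrices $\Gamma^n$ and the coboundary $\delta^n$, yielding a cell module with differential $\Gamma^n[n] + \delta^n[n]$. For the equivalence criterion, your appeal to Massey-product obstructions and a bar-filtration spectral sequence is heuristic rather than a proof; the paper's route is to identify $\mathcal{H}^{\G,f}_A$ with co-representations of the co-Lie algebra $\gamma_A$ (via flat nilpotent connections), compute $\mathrm{Ext}^n_{\gamma_A}(\mathbb{Q},V)\cong H^n(\wedge^*(\gamma_A[-1])\otimes V)\langle 0 \rangle$, compare this to $\mathrm{Hom}_{\mathcal{D}^{\G,f}_A}$ via the chain of isomorphisms terminating in the definition of $\rho$, and then invoke the d\'evissage Lemma \ref{cri for equi between tri}. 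The converse direction ($\rho$ equivalence implies 1-minimal) follows in the paper from reversing this computation to deduce that $A\to\wedge^*(\gamma_A[-1])$ is a quasi-isomorphism after applying $\mathrm{Hom}_\G(V[n],-)$, not from a Massey-product argument. You should also reconsider the claimed role of the injective central cocharacter: it does not primarily serve to bound strata in a bar filtration, but rather underlies the existence of the Adams decomposition and hence of the weight filtration $W_*$ that the entire construction is built on.
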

The proof of the first part of the above theorem is included in Section \ref{t}. In order to complete the proof of the second part, we need to use the reduced bar construction (Section \ref{bar con}) to give other descriptions about $\ml{H}^f_A$ (Section \ref{heart}). Then we finish the whole proof in Section \ref{main}. If $G = \mb{G}_m$, the above theorem coincides with the works of Kriz and May.  
\

The theory of cdgas over a reductive group is closely related with the weighted completion (see Definition \ref{df of weighted completion}).  We explain their relation in Section \ref{relation}. Another observation is that $\ml{H}^{f}_A$ belongs to a special kind of Tannakian categories (Definition \ref{split tan}), whose Tannakian fundamental groups are a semi-product of a prounipotent algebraic group and a reductive group. In the last section of the paper, we give a precise description about the coordinate ring of the Tannakian fundamental groups of such Tannkain categories by framed objects \cite{BGSV}.

%

\section*{Acknowledgements}
This paper is part of my PhD thesis at Universit\"{a}t Duisburg-Essen  submitted in August, 2016. I want to express my deepest gratitude to my adviser Professor Marc Levine for his constant guidance, encouragement and patience during this work.

\section{Basic definitions} \label{Basic definition}
\begin{con} \label{conv}
Let $G$ be a reductive algebraic group over $\mb{Q}$ and $w : \mb{G}_m \to G$ be a central cocharacter -- that is, the image of $w$ is contained in the center of $G$. We assume that $w$ is nontrivial i.e., injective. Using the map $w$, we can define the weight of representations of $\G$. (See Definition \ref{def of weight}). Fix a finite dimensional faithful representation $\V$ of $G$ with positive weights\footnote{For the existence of such $\V$, we refer to Corollary 2.5 in \cite{DM}.}. 
\end{con}

\begin{df}  \label{def of weight}
Let $V$ be a rational $G$-representation. For any $r \in \mb{Z}$, define the weight $r$ part of $V$ to be a sub representation of $V$:
$$V \lw r \rw =  \{x \in V  |   w(\lambda) \cdot x = \lambda^{r} x \ \ \ \ \ \text{for any} \ \ \  \lambda \in \mb{G}_{m}(k) \}.$$ 
A rational $G$-representation $V$ is called pure of weight $r$ if $V\lw r \rw = V$.
\end{df}
\begin{con}
The Adams degree for a pure weight $r$ representation $W$ of $\G$ over $\mb{Q}$ is defined to be $-r$. Given a complex of  linear $\G$-representations $A^{*}$, the Adams degree $r$ part of $A^{*}$ is denoted by $A^{*} \law r \raw$. We call the category of linear $\G$-representations over $\mb{Q}$ simply as the category of $\G$-representations.
\end{con}
\begin{df} \label{def of cdga over GL_2}
A cdga $(A^{*}, d, \cdot)$ over $\G$ consists of a complex $(A^{*}, d)$ in the category of $\G$-representations, where $d = \oplus_{n} d^{n}: A^{n} \to A^{n+1}$ is a homomorphism between $\G$-representations, satisfying:
\begin{itemize}
\item
there exists a homomorphism of complexes of $\G$-representations: $\cdot: A^{*} \otimes A^{*} \rightarrow A^{*}$, which is unital, graded commutative and associative.
\item
$d^{n+m}(a \cdot b) = d^{n}a \cdot b + (-1)^{n} a \cdot d^{m}b$, where $a \in A^{n}, b \in A^{m}$.
\item
the Adams grading gives a decomposition of $A^{*}$ into subcomplexes $A^{*} = \oplus_{r \in \mb{Z}}A^{*}\law r \raw$ and $\mb{Q}$ (the trivial $\G$-representation) is a direct summand of $A^{*} \law 0 \raw$. 
\end{itemize}
$A^{*}$ is called Adams connected if the Adams decomposition satisfies $A^{*} = \oplus_{r \geq 0}A^{*}\law r \raw$ and $A^{*} \law 0 \raw = \mb{Q}$.
Furthermore, $A^{*}$ is called connected (resp. cohomologically connected) if $A^{n} = 0$ for $n < 0$ and $A^{0} = \mathbb{Q}$ (resp. $H^{n}(A^{*}) = 0$ for $n < 0$ and $H^{0}(A^{*}) = \mathbb{Q}$).
\

For $x \in A^{n}\law r \raw$, we call $n$ the cohomological degree of $x$, denoted by $n = deg(x)$, and $r$ the Adams degree of $x$, denoted by $r = |x|$.
\end{df}

\begin{df} \label{def of dg A-module}
Let $A$ be a cdga over $\G$. A dg $A$-module $(M^{*}, d)$ over $\G$ consists of a complex $M^{*}$ of $\G$-representations with the differential $d$, together with a map $A^{*} \otimes M^{*} \rightarrow M^{*}, a \otimes m \rightarrow a \cdot m$, which makes $M^{*}$ into a $A^{*}$-module, and satisfies the Leibniz rule
$$d(a \cdot m) = da \cdot m + (-1)^{deg a}a \cdot dm;  a \in A^{*}, m \in M^{*}.$$
\end{df}

\begin{rmk}
By definition, there exists a decomposition of $M^{*}$ into subcomplexes $M^{*} = \oplus_{s}M^{*}\law s \raw$ satisfying $A^{*}\law r \raw \cdot M^{*} \law s \raw \subset M^{*} \law r+s \raw$, which is called the Adams decomposition.
\end{rmk}
\begin{df}
Let $M$ and $N$ be two dg $A$-modules. A morphism $f$ between $M$ and $N$ is a morphism between the underlying complexes of $\G$-representations of $M$ and $N$ such that $a \cdot f(m) = f(a \cdot m)$ for any $a \in A$ and $m \in M$.
\end{df}
\begin{exe} \label{gen-sphere}
Let $A[n]$ denote the $A^{*}$-module which is $A^{m + n}$ in degree $m$, with  a natural action of $A^{*}$ by multiplication. Given $A^{*}$ a cdga over $\G$, we let $A\langle r \rangle[n]$ be $A^{*}$-module which is $\bigoplus_{t \in \mb{Z}}A^{m + n}\law t \raw \otimes \V^{\otimes r} \law s-t \raw$ in bi-degree $(m, s)$, with the action given by multiplication. More generally,
given any $\G$-representation $W$, $A[n] \otimes W$, with $\bigoplus_{t \in \mb{Z}}A^{m + n}\law t \raw \otimes W \law s-t \raw$ in degree $(m, s)$, is also a dg $A$-module over $\G$. When $W$ is a rational representation of $\G$, $A[n] \otimes W$ is called the generalized sphere $A$-modules for any $n \in \mb{Z}$.
\end{exe}
\begin{df} \label{df of cell modules}
A dg $A$-module $M$ is a cell module if
\begin{enumerate}
\item \label{cell mod-1}
There is an isomorphism of $A$-modules in the category of $\G$-representations:
$$\oplus_{j \in J}A [-n_{j}] \otimes V_{j} \rightarrow M,$$
where all the $V_j$ are rational representations of $\G$ and all $n_j$ are integers.
\item \label{cell mod-2}
There is a filtration on the index set $J$:
$$J_{-1} = \emptyset \subset J_{0} \subset J_{1} \cdots \subset J$$
such that $J = \bigcup_{n=0}^{\infty}J_{n}$ and for $j \in J_{n}$,
$$db_{j} = \sum_{i \in J_{n-1}}a_{ij}b_{i},$$
where $b_j$ is in the cohomological degree $n_j$ part of the complex $Hom_{\G}(V_j, M)$ and $a_{ij}$ is in $A$.
\end{enumerate}
A finite cell module is a cell module with finite index set $J$.
\end{df}
\begin{rmk}
Given $M$ a cell module, using the condition \ref{cell mod-1} and \ref{cell mod-2}, one can construct a filtration of sub cell modules $M_n$, where $M_n$ is isomorphic to $\oplus_{j \in J_n}A [-n_{j}] \otimes V_{j}$ as complexes of $\G$-representations\footnote{In other words, a cell $A$-module $M$ can be considered as the union of an expanding sequence of sub $A$-modules $M_{n}$ such that $M_{0} = 0$ and $M_{n + 1}$ is the cofiber of a map $\phi_{n}: F_{n} \rightarrow M_{n}$, where all the $F_{n}$ are generalized sphere $A$-modules.}. $\{M_n\}_{n \in \mb{Z}_{\geq 0}}$ is called the sequential filtration of $M$. 
\end{rmk}
\begin{df} \label{df of cell module of Tate type}
Assume that the dimension of the fixed faithful representation $\V$ is $n$ and denote $\wedge^n \V$ by $det$. A cell module is called Tate-type if all the generalized sphere modules appearing in the first condition of Definition \ref{df of cell modules} are of the form $A[-n] \otimes det^{\otimes r}$ for some $r, n \in \mb{Z}$. 
\end{df}
\begin{df}
A rational representation $V$ of $\G$ has non-positive Adams degrees if $V \law r \raw = 0$ for any $r > 0$.
\end{df}
\begin{df}
A cell module is called effective if all the generalized sphere modules appearing in the  definition are of the form $A[-n] \otimes V_j$  with $V_j$ a direct summand of $\V^{\otimes i_j}$ for some $i_j \in \mb{Z}_{\geq 0}$ . 
\end{df}
We denote the category of dg $A$-modules over $\G$ by $\mathcal{M}_{A}^{\G}$, the category of cell $A$-modules by $\mathcal{CM}_{A}^{\G}$ and the category of finite cell modules by $\mathcal{CM}_{A}^{\G, f}$. Furthermore, we denote the category of effective cell $A$-modules by $\mathcal{CM}^{\G, eff}_{A}$ and the category of finite effective cell modules by $\mathcal{CM}_{A}^{\G, eff, f}$.  Finally we denote the full subcategory of cell $A$-modules of Tate-type by $\ml{CM}_{A}^{\mb{G}_m}$.

\section{The derived category of dg modules}
\label{construction for the derived category}
Let $A$ be a cdga over $\G$ and let $M$ and $N$ be dg $A$-modules. Let $\mathcal{H}om_{A}(M, N)$ be the dg $A$-module over $\G$ with $\mathcal{H}om_{A}(M, N)^{n}$ consisting of linear maps $f: M \rightarrow N$ with $f(M^a) \subset N^{a+n}$ and with the differential $d$ defined by $df(m) = d(f(m)) - (-1)^{n}f(dm)$ for $f \in \mathcal{H}om_{A}(M, N)^{n}$.  Let $Hom_{A}(M, N)$ be the dg $\mb{Q}$-module over $\G$ with $Hom_{A}(M, N)^{n}$ consisting of linear maps $f: M \rightarrow N$ with $f(M^a) \subset N^{a+n}$, $ f(am) = (-1)^{np}af(m)$
for $a \in A^{p}$ and $m \in M^a$, and with the differential $d$ defined by $df(m) = d(f(m)) - (-1)^{n}f(dm)$ for $f \in Hom_{A}(M, N)^{n}$. 
\begin{df}
For $f: M \rightarrow N$ a morphism of  dg $A$-modules, we let $Cone(f)$ be the dg $A$-module with:
$$Cone(f)^{n}(r) = N^{n}(r) \oplus M^{n+1}(r)$$
and the differential is given by $d(n, m) = (dn + f(m), -dm)$.
\end{df}
Given $M$ a dg $A$-module, we let $M[1]$ denote a dg $A$-module such that $M[1]^{n} = M^{n+1}$ with the differential $-d$, where $d$ is the differential of $M$. Then we have the following sequence:
$$M \xrightarrow{f} N \xrightarrow{i} Cone(f) \rightarrow M[1],$$
which is called a cone sequence.
\begin{df} \label{def-hom}
We let $\mathcal{K}_{A}^{\G}$ denote the homotopy category of the category of dg $A$-modules over $\G$. The objects are the same as $\mathcal{M}_{A}^{\G}$ and $$Hom_{\mathcal{K}_{A}^{\G}}(M, N) = Hom_{\G}(\mb{Q}, H^{0}(Hom_{A}(M, N))).$$
\

The derived category $\mathcal{D}_{A}^{\G}$ of dg $A$-modules over $\G$ is the localization of $\mathcal{K}_{A}^{\G}$ with respect to quasi-isomorphisms between dg $A$-modules, which are defined as morphisms $M \rightarrow N$ being quasi-isomorphic on the underlying complexes of 
$\mb{Q}$-vector spaces.
\end{df}
Given $M, N \in \mathcal{M}_{A}^{\G}$ (resp.  $\mathcal{CM}_{A}^{\G}$), we can define their direct sum to be the direct sum $M \oplus N$ of the chain complexes of $GL_2$-representations which is equipped with a natural $A$-module structure (resp. cell $A$-module structure). Furthermore, the infinite direct sum exists in both $\mathcal{M}_{A}^{\G}$ and $\mathcal{CM}_{A}^{\G}$.
\begin{lem}
The infinite direct sums defined above  is the categorical sum in $\mathcal{K}_{A}^{\G}$.
\end{lem}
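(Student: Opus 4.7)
The plan is to verify the universal property of the coproduct in $\ml{K}_A^\G$ directly from the description of morphisms in Definition \ref{def-hom}. Fix a family $\{M_i\}_{i \in I}$ of dg $A$-modules with direct sum $M := \bigoplus_i M_i$ constructed component-wise, together with the natural inclusions $\iota_i : M_i \hookrightarrow M$. For any target $N$, I aim to show that the natural map
$$Hom_{\ml{K}_A^\G}(M, N) \longrightarrow \prod_i Hom_{\ml{K}_A^\G}(M_i, N)$$
given by precomposition with the $\iota_i$ is a bijection. The strategy is the standard three-step formal argument: convert the direct sum on the source into a product through the internal Hom complex, then commute this product past $H^0$, and finally past $Hom_\G(\mb{Q}, -)$.

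In more detail, I would first observe that the ordinary universal property of the direct sum of $A$-modules in the category of complexes of $\G$-representations gives an isomorphism of dg $\mb{Q}$-modules over $\G$
$$Hom_A(M, N) \cong \prod_i Hom_A(M_i, N),$$
because an $A$-linear, $\G$-equivariant map out of $\bigoplus_i M_i$ is exactly a compatible family of such maps out of each $M_i$, and both the differential and the $\G$-action are defined componentwise. Next, since products in the category of $\mb{Q}$-linear $\G$-representations are computed on underlying $\mb{Q}$-vector spaces (with a $\G$-action afterward), they are exact, and hence $H^0$ commutes with them, yielding $H^0(Hom_A(M, N)) \cong \prod_i H^0(Hom_A(M_i, N))$. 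Finally, $Hom_\G(\mb{Q}, -)$ preserves all limits as a representable functor, so applying it to both sides produces the required bijection.

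I do not anticipate a genuine obstacle: the statement is a purely formal consequence of the fact that $Hom_{\ml{K}_A^\G}$ is built as a composition of functors, each of which is compatible with products in its contravariant variable. The only subtlety worth flagging is that the infinite direct sum on the source need not coincide with the infinite product in the ambient category of complexes of $\G$-representations, but the calculation above is phrased precisely so that this mismatch is irrelevant: the coproduct is what we begin with, and the universal property of $Hom_A(-, N)$ converts it into a product on the other side of the adjunction.
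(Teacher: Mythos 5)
The paper states this lemma without proof, treating it as routine, so there is no "paper approach" to compare against; your argument is the standard one and it is correct. The three-step reduction — converting the coproduct in the source into a product of internal Hom complexes, commuting $H^0$ past the product, then commuting $Hom_\G(\mb{Q},-)$ past it — does establish the universal property, and your closing remark about the mismatch between coproduct and product in the ambient category being irrelevant is exactly the right point to make explicit. One small caution on phrasing: products in the category of \emph{rational} $\G$-representations are not literally computed on underlying vector spaces (one must pass to the locally finite part), so the cleanest way to make the middle step airtight is to observe that $Hom_A(\bigoplus_i M_i, N)$ is, by inspection of the definition, the degree-wise vector-space product $\prod_i Hom_A(M_i, N)$ with component-wise differential and $\G$-action; $H^0$ then commutes with this vector-space product because such products are exact, and taking $\G$-invariants of a component-wise $\G$-action also commutes with the vector-space product. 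This sidesteps any appeal to categorical products in $Rep(\G)$ and matches the spirit of Definition \ref{def-hom}.
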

\begin{con}
Let $I$ be the complex
$$\mathbb{Q} \xrightarrow{\delta} \mathbb{Q} \oplus \mathbb{Q}$$
with a free $\mathbb{Q}$ generator $[I]$ in degree $-1$, two free $\mb{Q}$ generators $[0], [1]$ in degree $0$ and $\delta[I] = [0] - [1]$. We have two inclusions $i_0, i_1: \mb{Q} \to I$ sending $1$ to $[0], [1]$ respectively.
\

For $M$ a dg $A$-module, we let $CM = Cone(id_M)$. Notice that the cone $CM$ is the quotient module $M \otimes (I/\mb{Q}[1])$.
\end{con}
Using the same idea of proof as in \cite{KM}, we can show the following theorems.
\begin{thm} \label{HELP}
(HELP) Let $L$ be a cell $A$-submodule of a cell $A$-module $M$. Let $e: N \to P$ be a quasi-isomorphism of dg $A$-modules. Then given maps $f: M \to P, g: L \to N$, and $h: L \otimes I \to P$ such that $f|_{L} = h \circ i_0$ and $e \circ g = h \circ i_1$, there are maps $\widehat{g}, \widehat{h}$ that make the following diagram commute.
\

\begin{center}
\begin{xy}
(210,0)*+{L}="v1";(240,0)*+{L \otimes I}="v2";(270,0)*+{L}="v3";
(225,-15)*+{P}="v4";(255,-15)*+{N}="v5";
(210,-30)*+{M}="v6";(240,-30)*+{M \otimes I}="v7";(270,-30)*+{M}="v8";
{\ar@{->}^{i_0} "v1";"v2"};{\ar@{->}_{i_1} "v3";"v2"};
{\ar@{->} "v1";"v6"};{\ar@{->} "v2";"v7"};{\ar@{->} "v3";"v8"};
{\ar@{->}^{h} "v2";"v4"};{\ar@{->}^{g} "v3";"v5"};
{\ar@{->}_{\ \ e} "v5";"v4"};
{\ar@{->}^{f} "v6";"v4"};{\ar@{->}^{\widehat{h}} "v7";"v4"};{\ar@{->}^{\widehat{g}} "v8";"v5"};
{\ar@{->}^{i_0} "v6";"v7"};{\ar@{->}_{i_1} "v8";"v7"};
\end{xy}
\end{center}
\end{thm}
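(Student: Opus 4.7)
The plan is to prove the theorem by induction on the sequential filtration, leveraging the fact that $L$ is a cell submodule of $M$, so one can arrange a cell filtration $M_0 \subset M_1 \subset \cdots$ of $M$ with $M_0 = L$ and each $M_{n+1}$ obtained from $M_n$ by attaching generalized sphere modules $A[-k_j] \otimes V_j$. For the base case, set $\widehat{g}|_L = g$ and $\widehat{h}|_{L \otimes I} = h$; the hypotheses $f|_L = h \circ i_0$ and $e \circ g = h \circ i_1$ ensure all commutativities.

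For the inductive step, assume $\widehat{g}_n: M_n \to N$ and $\widehat{h}_n: M_n \otimes I \to P$ have been constructed extending $g$ and $h$ and compatible with $f$ and $e$. Since attaching is done cell-by-cell and the construction is $A$-linear and $\G$-equivariant, it suffices to extend across a single generalized sphere $A[-k] \otimes V$. For each generator $b \in V$ (in cohomological degree $k$) we already have a well-defined cycle $\widehat{g}_n(db) \in N$ together with the homotopy datum $\widehat{h}_n$ linking $f(db)$ and $e(\widehat{g}_n(db))$ in $P$. The task is to produce $\widehat{g}(b) \in N$ with $d\widehat{g}(b) = \widehat{g}_n(db)$, and to extend the homotopy over the new cell so that the boundary conditions $f|_M \circ i_0$ and $e \circ \widehat{g}$ match the extended $\widehat{h}$ on $M_{n+1} \otimes I$.

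The key technical input is the acyclicity of the mapping cone $Cone(e)$: because $e: N \to P$ is a quasi-isomorphism, $Cone(e)$ has trivial cohomology. Reinterpreted, the pair $(\widehat{g}_n(db), \widehat{h}_n \text{ restricted to the attaching data})$ defines a cycle in $Cone(e)$ in the appropriate degree, and therefore bounds. A choice of bounding element gives both $\widehat{g}(b) \in N$ and the extension of the homotopy over $b \otimes I$. Working with $\G$-equivariant choices (possible because the relevant spaces are $\G$-representations and the constructions are $\G$-linear) and using the $A$-module structure to propagate the choice from generators to all of $A[-k] \otimes V$ completes the extension.

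Assembling the per-cell extensions across all cells attached at stage $n+1$, we obtain $\widehat{g}_{n+1}$ and $\widehat{h}_{n+1}$. Passing to the colimit over $n$ yields the required $\widehat{g}$ and $\widehat{h}$ on all of $M = \bigcup_n M_n$ and $M \otimes I$. The main obstacle is bookkeeping: organizing the signs in the Leibniz rule and the differential on $M \otimes I$, and ensuring the homotopy extensions chosen on distinct cells remain compatible and assemble into a well-defined $A$-linear, $\G$-equivariant map on the colimit. All nontrivial existence assertions reduce to the acyclicity of $Cone(e)$, exactly as in the proof of the classical HELP theorem in \cite{KM}.
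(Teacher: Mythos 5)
Your proof is correct and takes essentially the same route as the paper: reduce by induction on the cell filtration to extending the lift and homotopy across a single generalized sphere cell (keeping choices $\G$-equivariant via semi-simplicity), then solve the resulting lifting-extension problem using the quasi-isomorphism hypothesis. Where the paper spells out two uses of the quasi-isomorphism of $e$ explicitly (injectivity on cohomology to find a primitive of $g(w^n)$, then surjectivity to correct the candidate), you package these as acyclicity of $Cone(e)$ — an equivalent formulation, though for precision note the cycle in the cone must include the $f(b)$ term alongside $\widehat{g}_n(db)$ and $\widehat{h}_n(db\otimes[I])$, not merely ``$\widehat{h}_n$ restricted to the attaching data.''
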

\begin{proof}
By induction on the filtration on the index set $J$ and pullback along cells not in $L$, we may assume that $M  \cong C(A[n] \otimes W)$ and $L \cong A[n] \otimes W$. Then using the semi-simplicity of the category of $\G$-representations, we can further assume that $W$ is an irreducible $\G$-representation. Let's denote the generator of $W$ by $w^n$.
\

Let $u = w^n \otimes [0]$ and $v = w^{n} \otimes [I]$ be the generators of $C(A[n] \otimes W)$. By definition, we have $d(v) = (-1)^n u$. We also have:
$e \circ g(w^n) = h(w^n \otimes [1])$
and
$f(u) = h(u)$. Therefore
\begin{equation} \nonumber
\begin{split}
&d(h(w^n \otimes [I]) - f(v)) = hd(w^n \otimes [I]) - f(dv) \\
= & h(d(w^n) \otimes [I] + (-1)^n h(w^n \otimes ([0] - [1]))) - (-1)^n f(u)\\
= & (-1)^n h(w^n \otimes [0]) + (-1)^{n+1} h(w^n \otimes [1]) - (-1)^n f(u) \\
= & (-1)^{n+1} h(w^n \otimes [1]) =(-1)^{n+1} e \circ g(w^n).
\end{split}
\end{equation}
Because $e \circ g(w^n)$ is a coboundary and $e$ induces a quasi-isomorphism, we know that $g(w^n)$ is also a coboundary, i.e., there exist $\tilde{n} \in N^{n-1}$ such that $d(\tilde{n}) = g(w^n)$. Then
$p = e(\tilde{n}) + h(w^n \otimes [I]) - f(v)$ is a cocycle. Then using the quasi-isomorphism at $n-1$, there exist a cocycle $n \in N$ and a chain $q \in P$ such that $d(q) = p - e(n)$.
\

We define $\widehat{g}(j) = (-1)^n(\tilde{n} - n)$ and $\widehat{h}(j \otimes [I]) = q$.
\end{proof}
\begin{thm} \label{Whitehead}
(Whitehead) If $M$ is a cell $A$-module and $e: N \rightarrow P$ is a quasi-isomorphism of $A$-modules, then $$e_{*}: Hom_{\mathcal{K}_{A}^{\G}}(M, N) \rightarrow Hom_{\mathcal{K}_{A}^{\G}}(M, P)$$ is an isomorphism. So a quasi-isomorphism between cell $A$-modules is a homotopy equivalence.
\end{thm}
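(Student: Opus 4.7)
The plan is to deduce the bijectivity of $e_*$ from Theorem \ref{HELP} by two applications with well-chosen cell pairs, and then derive the homotopy equivalence claim formally. For surjectivity, given any representative $f : M \to P$, I would apply HELP to the trivial cell pair $L = 0 \subset M$ with $g = 0$ and $h = 0$; the compatibility hypotheses are then vacuous. HELP returns $\widehat{g} : M \to N$ together with $\widehat{h} : M \otimes I \to P$ satisfying $\widehat{h} \circ i_0 = f$ and $\widehat{h} \circ i_1 = e \circ \widehat{g}$, so $\widehat{h}$ is a chain homotopy from $f$ to $e \circ \widehat{g}$, whence $[f] = e_*[\widehat{g}]$ in $Hom_{\ml{K}_A^{\G}}(M, P)$.

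For injectivity, suppose $g_0, g_1 : M \to N$ are maps with $e \circ g_0 \simeq e \circ g_1$ via a chain homotopy $k : M \otimes I \to P$, so that $k \circ i_0 = e \circ g_0$ and $k \circ i_1 = e \circ g_1$. I would apply HELP with the cell $A$-module $M \otimes I$ playing the role of $M$ and with $L := M \oplus M \hookrightarrow M \otimes I$ (identified with $M \otimes \mb{Q}[0] \oplus M \otimes \mb{Q}[1]$) as the cell $A$-submodule. Take the HELP data to be $f := k$, $g := (g_0, g_1) : L \to N$, and $h : L \otimes I \to P$ the constant chain homotopy defined by $h(l \otimes [0]) = h(l \otimes [1]) = (e \circ g)(l)$ and $h(l \otimes [I]) = 0$. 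A short check shows $h$ is a chain map, and the required identities $k|_L = h \circ i_0$ and $e \circ g = h \circ i_1$ both reduce to $e \circ g = e \circ g$. HELP then produces $\widehat{g} : M \otimes I \to N$ with $\widehat{g}|_L = (g_0, g_1)$, which is precisely a chain homotopy from $g_0$ to $g_1$, so $[g_0] = [g_1]$.

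The final assertion follows formally from the bijectivity: if $e : N \to P$ is a quasi-isomorphism between cell $A$-modules, then applying the bijection with $M = P$ produces $f : P \to N$ with $e \circ f \simeq id_P$, and applying injectivity with $M = N$ forces $f \circ e \simeq id_N$, since both $[f \circ e]$ and $[id_N]$ map to $[e]$ under $e_*$. The only point requiring a small amount of care is to confirm that $M \otimes I$ is a cell $A$-module with $M \oplus M$ as a cell submodule; this is immediate from the description of $I$ as attaching a single cell $\mb{Q}[I]$ in degree $-1$ along its boundary $\mb{Q}[0] \oplus \mb{Q}[1]$, so $M \otimes I$ is obtained from $M \oplus M$ by attaching one degree-shifted copy of each cell of $M$. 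Beyond this bookkeeping, the argument is a purely formal application of HELP.
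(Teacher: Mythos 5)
Your proof is correct and follows essentially the same route as the paper's: surjectivity from HELP with $L = 0$, injectivity from HELP with the pair $(M \otimes I, M \otimes \partial I)$, and the homotopy-equivalence claim deduced formally from bijectivity with $M = P$ and then $M = N$. The only difference is that you spell out the verification that the auxiliary data $(f,g,h)$ satisfy the HELP hypotheses, which the paper leaves implicit.
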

\begin{proof}
The surjectivity is coming from Theorem \ref{HELP}, when we take $L = 0$. The injectivity can be checked when we replace $M$ and $L$ by $M \otimes_{\mb{Q}} I$ and $M \otimes_{\mb{Q}} (\partial I)$ respectively. When $N, P$ are both cell $A$-modules, taking $M = P$, we get a map $f: P \to N$ which corresponds to $id_P$. From the functoriality, $f$ is the homotopy inverse of $e$. 
\end{proof}
\begin{cor}
Let $M, N$ be two dg $A$-modules, and $f: M \to N$ be a morphism between dg $A$-modules. Let $\widehat{M}$ and $\widehat{N}$ be two cell $A$-modules such that $\widehat{M} \xrightarrow{r_M} M$ and $\widehat{N} \xrightarrow{r_N} N$ are quasi-isomphisms. Then there exists a morphism between cell $A$-modules $\widehat{f}: \widehat{M} \to \widehat{N}$ lifting $f$. 
\end{cor}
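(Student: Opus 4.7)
The plan is to apply Whitehead's theorem (Theorem \ref{Whitehead}) to the quasi-isomorphism $r_N \colon \widehat{N} \to N$ with source object $\widehat{M}$, which is a cell $A$-module by hypothesis. This gives an isomorphism
$$(r_N)_* \colon Hom_{\mathcal{K}_{A}^{\G}}(\widehat{M}, \widehat{N}) \xrightarrow{\ \cong\ } Hom_{\mathcal{K}_{A}^{\G}}(\widehat{M}, N).$$
Now consider the composite $f \circ r_M \colon \widehat{M} \to N$, viewed as an element of $Hom_{\mathcal{K}_{A}^{\G}}(\widehat{M}, N)$. By the isomorphism above, there is a morphism $\widehat{f} \colon \widehat{M} \to \widehat{N}$, well-defined up to homotopy, with the property that $r_N \circ \widehat{f}$ and $f \circ r_M$ are equal as classes in $Hom_{\mathcal{K}_{A}^{\G}}(\widehat{M}, N)$; that is, the square
\begin{center}
\begin{xy}
(0,0)*+{\widehat{M}}="a";(30,0)*+{\widehat{N}}="b";
(0,-15)*+{M}="c";(30,-15)*+{N}="d";
{\ar@{->}^{\widehat{f}} "a";"b"};
{\ar@{->}_{r_M} "a";"c"};
{\ar@{->}^{r_N} "b";"d"};
{\ar@{->}_{f} "c";"d"};
\end{xy}
\end{center}
commutes up to chain homotopy, which is the desired sense of lifting in $\mathcal{K}_{A}^{\G}$ (and hence in $\mathcal{D}_{A}^{\G}$).

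The only point that requires care is ensuring that the surjectivity statement in Whitehead's theorem really yields an honest morphism of dg $A$-modules whose composite with $r_N$ is homotopic to $f \circ r_M$; this is exactly what Theorem \ref{Whitehead} provides via the $L = 0$ case of HELP (Theorem \ref{HELP}) applied to $e = r_N$ and the map $f \circ r_M \colon \widehat{M} \to N$. No further obstruction arises because $\widehat{M}$ has a cell structure, so the inductive lifting argument of HELP applies directly. The uniqueness of $\widehat{f}$ up to homotopy — although not needed for the statement — follows from the injectivity half of Whitehead's theorem.
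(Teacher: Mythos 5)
Your proof is correct and follows exactly the paper's argument: apply Whitehead's theorem to $r_N$ with source $\widehat{M}$ to get the isomorphism $(r_N)_*$, then take $\widehat{f}$ to be the preimage of $f \circ r_M$. The additional remarks on commutativity up to homotopy and uniqueness are accurate elaborations but do not change the route.
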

\begin{proof}
From Theorem \ref{Whitehead}, we know that $$r_{N*}:Hom_{\mathcal{K}_{A}^{\G}}(\widehat{M}, \widehat{N}) \rightarrow Hom_{\mathcal{K}_{A}^{\G}}(\widehat{M}, N)$$
is an isomorphism. Therefore $f \circ r_M \in \mathcal{K}_{A}(\widehat{M}, N)$ have a preimage, which is just $\widehat{f}$.
\end{proof}
\begin{rmk} \label{Hom_K = Hom_D}
From the above proof, we also know that: Given a cell module $M$ and an arbitrary dg $A$-module $N$, we have:
$$Hom_{\mathcal{D}_{A}^{\G}}(M, N) \cong Hom_{\mathcal{K}_{A}^{\G}}(M, \widehat{N}) \cong Hom_{\mathcal{K}_{A}^{\G}}(M, N),$$
where $ \widehat{N}$ is a cell module and $ \widehat{N} \to N$ is a quasi-isomorphism between dg $A$-modules.
\end{rmk}
\begin{thm}  \label{App cell}
(Approximation by cell modules) For any dg $A$-module $M$, there is a cell $A$-module $N$ and a quasi-isomorphism $e: N \rightarrow M$.
\end{thm}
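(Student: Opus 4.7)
The plan is to construct $N$ as a countable union $N = \bigcup_n N_n$ of cell $A$-submodules, starting from $N_0 = 0$, where at each stage $N_{n+1}$ is obtained from $N_n$ by attaching new cells to correct the failure of the current map $e_n : N_n \to M$ to be a quasi-isomorphism. The engine that will make this work is the semisimplicity of the category of rational $\G$-representations, which holds because $\G$ is reductive and we are in characteristic zero; this semisimplicity lets me split every short exact sequence of $\G$-representations $\G$-equivariantly.

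At the inductive step I would examine $H^k(e_n) : H^k(N_n) \to H^k(M)$ in each cohomological degree $k$ and each Adams weight, and decompose the cokernel and kernel as direct sums of irreducible $\G$-subrepresentations, say $\bigoplus_\alpha V^k_\alpha$ and $\bigoplus_\beta W^k_\beta$. For each cokernel summand $V^k_\alpha$, I would choose a $\G$-equivariant lift $V^k_\alpha \to Z^k(M)$ (available by semisimplicity applied to $M^{k-1} \to Z^k(M) \to H^k(M)$) and $A$-linearly extend it to a map from the generalized sphere module $A[-k] \otimes V^k_\alpha$ to $M$, with zero differential on the new generators. For each kernel summand $W^k_\beta$ I would pick a $\G$-equivariant cocycle representative $W^k_\beta \to Z^k(N_n)$ together with a $\G$-equivariant lift $W^k_\beta \to M^{k-1}$ of the resulting coboundary in $M$; then attach a cell $A[-(k-1)] \otimes W^k_\beta$ whose differential sends the new generators to the chosen cocycle in $N_n$, and define $e_{n+1}$ on this cell via the chosen bounding chain. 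Taking $N_{n+1}$ to be the sum of $N_n$ with all these new cells (and $J_{n+1}$ to be $J_n$ adjoined with the new indices) preserves the cell-module structure of Definition \ref{df of cell modules}, since each new differential lands in $N_n$.

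Finally I would set $N = \mathrm{colim}_n N_n$ and $e = \mathrm{colim}_n e_n$; the union of the filtrations gives $N$ its sequential filtration, so $N$ is a cell module, and because cohomology commutes with filtered colimits of complexes, $H^*(e)$ is surjective (every class missed at stage $n$ lies in the image by stage $n+1$) and injective (every kernel class at stage $n$ becomes a coboundary by stage $n+1$). The main technical point I expect to have to verify carefully is the simultaneous existence of the $\G$-equivariant lifts and bounding chains at each inductive step, uniformly over the (possibly infinite) decompositions of cokernel and kernel; but this reduces entirely to applying semisimplicity of rational representations of the reductive group $\G$ to the relevant short exact sequences of $\G$-representations, so beyond the bookkeeping of indices I do not anticipate a substantial obstacle.
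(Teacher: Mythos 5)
Your proposal is correct and follows essentially the same cell-attachment strategy as the paper's proof: build $N$ as a filtered colimit of sub-cell-modules $N_n$, attaching generalized sphere modules at each stage, using semisimplicity of rational $\G$-representations to produce the equivariant splittings, lifts and bounding chains, and invoking commutation of cohomology with filtered colimits to conclude. The only organizational difference is that the paper arranges $e_1 \colon N_1 \to M$ to be surjective on cohomology at the first step (taking $N_1$ to be a direct sum of $A[-q]\otimes V_i$ with trivial differential, one for each irreducible summand of $H^q(M)$ in each bidegree) and then devotes all subsequent stages exclusively to killing the kernel of $e_{n*}$ by attaching cones; you instead treat cokernel and kernel symmetrically at every stage. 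Both schemes converge to the same conclusion, and the technical core — that all choices can be made $\G$-equivariantly because every rational representation of a reductive group in characteristic zero decomposes as a direct sum of irreducibles, even in the infinite-dimensional locally finite setting — is identical and correctly identified in your write-up.
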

\begin{proof}
We will construct a sequential filtration $N_n$ and compatible maps $e_n: N_n \to M$ inductively. More precisely, we need to construct cell modules $N_n$, whose index set is denoted by $J_n$, satisfy the condition \ref{cell mod-2} in the definition of cell modules. For every pair $(q, r)$, we decompose $H^{q}(M)\law r \raw \cong \oplus_{i}V_{i}$ as the direct sum of  irreducible $\G$-representations $V_{i}$ with the Adams degree $r$. Choosing a splitting of $Ker(M^q \law r \raw \xrightarrow{d} M^{q+1} \law r \raw) \twoheadrightarrow H^{q}(M) \law r \raw$, we think $V_{i}$ as sub $\G$-representations in $M^{q}\law r \raw$, because of the semi-simplicity of the category of $\G$-representations. Then we take $N_{1} = \oplus_{(q, r)} \oplus_{i} A[-q] \otimes V_{i}$ with trivial differential. There is a morphism between dg $A$-modules: $N_{1} \rightarrow M$, which is epimorphism on the cohomologies. Inductively, assume that $e_n: N_n \to M$ has been constructed. Consider the set of the pair of cocycles consisting the pairs of unequal cohomology classes on $N_n$ and mapping under $(e_n)^{*}$ to the same element of $H^{*}(M)$. Choose a pair $W_1^q \law r \raw$ and $W_2^q \law r \raw$ that live in the bidegree $(q,r)$ satisfies above condition, i.e., we can view $W_1^q \law r \raw \oplus W_2^q \law r \raw$ as the kernel of the morphism $e_{n*}$ on the cohomology of bidegree $(q,r)$. (Here one need to take a sign for the second component.) Simply denote $W_1^q \law r \raw \oplus W_2^q \law r \raw$ by $W_1$.
There is a morphism between dg $A$-modules $A[-q] \otimes W_{1}$ to $N_{n}$ extending the map between $\G$-representations $W_1 \to H^q(N)(r)$. Take $N_{n+1}$ to be the pushout of $N_{n}$ and $A[-q] \otimes W_{1} \oplus A[-q] \otimes W_{1}[1]$ over $A[-q] \otimes W_{1}$. Then we have $0 \rightarrow W_{1} \rightarrow H^{q}(N_{n})(r) \rightarrow H^{q}(N_{n+1})(r) \rightarrow 0$. We get $N_{n+1}$ by attaching $N_{n}$ with a generalized sphere dg $A$-module $A[-q] \otimes W_{1}[1]$, which implies $N_{n+1}$ is a cell $A$-module. It is easy to see the differentials on $N_{n+1}$ satisfy the condition \ref{cell mod-2} in the definition of cell modules. Now we have a distinguish triangle of dg $A$-modules:
$$A[-q] \otimes W_1 \xrightarrow{i} N_n \to N_{n+1} \to (A[-q] \otimes W_1)[1].$$
Notice that:
\begin{equation} \nonumber
\begin{split}
& Hom_{A}(A[-q] \otimes W_1, M) \cong Hom_{D(\G)}(W_1, M[q]) \cong Hom_{\G}(W_1, H^q(M)).
\end{split}
\end{equation}
Therefore we have:
\begin{equation} \nonumber
\begin{split}
& Hom_{A}(N_{n+1}, M) \to Hom_A(N_n, M) \\ \xrightarrow{i} & Hom_{A}(A[-q] \otimes W_1, M) \cong Hom_{\G}(W_1, H^q(M)).
\end{split}
\end{equation}
Because $W_{1}$ as a $\G$-representation maps to zero in the cohomology group of $H^{q}(M)\law r \raw$, which implies $i(e_n) = 0$ in $Hom_{\G}(W_1, H^q(M))$, one may find $e_{n+1} \in Hom_{A}(N_{n+1}, M)$, which extends $e_n$. Let $N$ be the direct limit of the $N_{n}$. Then $N$ is a cell module and the morphism $N \rightarrow M$ is a quasi-isomorphism by the construction.
\end{proof}

Putting together with all previous results, we get:
\begin{thm}  \label{derived}
Let $A$ be a cdga over $\G$. Then the functor
$$\mathcal{KCM}_{A}^{\G} \rightarrow \mathcal{D}_{A}^{\G}$$
is an equivalence of triangulated categories.
\end{thm}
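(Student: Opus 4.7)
The plan is to use the two workhorse results already established, namely Whitehead (Theorem \ref{Whitehead}) together with Remark \ref{Hom_K = Hom_D} for fully faithfulness, and cell approximation (Theorem \ref{App cell}) for essential surjectivity. The functor in question is the composition
\[
\mathcal{KCM}_{A}^{\G} \hookrightarrow \mathcal{K}_{A}^{\G} \longrightarrow \mathcal{D}_{A}^{\G},
\]
so I first need to check that this composite lands well and preserves triangles before verifying that it is an equivalence.

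First I would handle essential surjectivity. Given any $M \in \mathcal{D}_{A}^{\G}$, Theorem \ref{App cell} produces a cell $A$-module $N$ and a quasi-isomorphism $e: N \to M$, which becomes an isomorphism in $\mathcal{D}_{A}^{\G}$; this shows every object is isomorphic to one in the image. Next, for fully faithfulness, let $M,N$ be two cell $A$-modules. Since $\mathcal{KCM}_{A}^{\G}$ is by definition the full subcategory of $\mathcal{K}_{A}^{\G}$ spanned by cell modules, we have $\operatorname{Hom}_{\mathcal{KCM}_{A}^{\G}}(M,N) = \operatorname{Hom}_{\mathcal{K}_{A}^{\G}}(M,N)$. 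By Remark \ref{Hom_K = Hom_D}, the latter group is canonically isomorphic to $\operatorname{Hom}_{\mathcal{D}_{A}^{\G}}(M,N)$, and a diagram chase shows this isomorphism is exactly the map induced by the functor. This uses Whitehead essentially: a quasi-isomorphism of cell modules is a homotopy equivalence, so there is no need to invert anything already inside $\mathcal{KCM}_{A}^{\G}$.

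It remains to promote this equivalence of underlying categories to an equivalence of triangulated categories. The triangulated structure on $\mathcal{KCM}_{A}^{\G}$ comes from the cone construction: for a map $f: M \to N$ of cell modules, the cone $\operatorname{Cone}(f)$ is again a cell module, since one can extend a sequential filtration of $N$ by the shifted cells of $M$ using $f$ to define the attaching differentials. The distinguished triangles are, as usual, isomorphic in $\mathcal{KCM}_{A}^{\G}$ to cone sequences $M \xrightarrow{f} N \to \operatorname{Cone}(f) \to M[1]$. These cone sequences are sent by the functor to cone sequences in $\mathcal{D}_{A}^{\G}$, which are the distinguished triangles there, so the functor is exact. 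Combined with the above fully-faithful and essentially surjective properties, this yields the desired equivalence of triangulated categories.

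The only subtle point, and where I expect to spend the most care, is verifying that the shift and cone on $\mathcal{KCM}_{A}^{\G}$ satisfy the triangulated axioms and match those on $\mathcal{D}_{A}^{\G}$ under the comparison functor. The bookkeeping is standard once one knows that (i) $M[1]$ and $\operatorname{Cone}(f)$ remain cell modules, and (ii) homotopic maps produce isomorphic cones in $\mathcal{KCM}_{A}^{\G}$, both of which are direct from the cell structure and the semi-simplicity of $\G$-representations used throughout Section \ref{construction for the derived category}. Everything else is formal.
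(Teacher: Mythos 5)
Your proposal is correct and takes essentially the same approach as the paper, which gives no explicit proof and simply invokes ``all previous results'' — namely Whitehead (Theorem \ref{Whitehead}), the computation in Remark \ref{Hom_K = Hom_D}, and cell approximation (Theorem \ref{App cell}). You have spelled out the assembly the paper leaves implicit, including the routine but necessary check that shifts and cones of cell modules remain cell modules so that the comparison functor is exact.
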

\begin{df}
We define $\mathcal{D}^{\G, f}_{A}$ to be the full subcategory of $\mathcal{D}_{A}^{\G}$ whose objects are quasi-isomorphic to some finite cell $A$-module in $\mathcal{D}_{A}^{\G}$.
\end{df}
\begin{rmk}
From the proof of Theorem \ref{derived}, we can also know that $\mathcal{KCM}^{\G, f}_{A} \rightarrow \mathcal{D}^{\G, f}_{A}$ is an equivalence of triangulated categories.
\end{rmk}
\begin{exe} \label{basic example}
Let $A = \mathbb{Q}$, then $\mathcal{KCM}^{\G, f}_{A}$ is just the bounded derived category of the category of rational representations of $\G$, denoted by $D^{b}(\G)$.
\end{exe}
\begin{df}
The full triangulated subcategory of $\mathcal{D}_{A}^{\G}$ generated by effective cell modules is denoted by $\mathcal{D}^{\G, eff} _{A}$. The full triangulated subcategory of $\mathcal{D}_{A}^{\G}$ generated by a family of objects $\{A\langle r \rangle[n] | r \geq 0, n \in \mb{Z}\}$ is denoted by $\ml{T}_{A}^{\G}$.
\end{df}
Recall the definition of the idempotent completion of a dg category. Given $\ml{C}$  a dg category, then its idempotent completion $\ml{C}^{\natural}$ has the objects $(M, p)$ with $p: M \to M$ an idempotent endomorphism in $Z^{0}\ml{C}$ and the hom complex  given by
$$\ml{H}om_{\ml{C}^{\natural}}((M, p), (N, q))^{*} = p^{*}q_{*}\ml{H}om_{\ml{C}}(M, N).$$
\begin{rmk}
In \cite{BS}, Balmer and Schlichting have shown that, for $\ml{A}$ a triangulated category, $\ml{A}^{\natural}$ has a canonical structure of a triangulated category which makes the natural functor $\ml{A} \to \ml{A}^{\natural}$ exact. The same holds for the triangulated tensor categories.
\end{rmk}
\begin{exe}
Let us consider $A = \mathbb{Q}$ and $\G = GL_2$. Then $\mathcal{D}^{GL_2, eff, f}_{A}$ is a subcategory of the bounded derived category of the category of rational representations of $GL_2$ generated by $Sym^{a}(b)$ for $a, b \in \mb{Z}_{\geq 0}$ and denoted by $D^{eff, b}(GL_2)$.
\end{exe}
\section{The weight filtration for dg modules} \label{weight}
In this section, we assume that $A$ is an Adams connected cdga over $\G$. (Definition \ref{def of cdga over GL_2}.)
\begin{df}
A dg $A$-module $M$ is called almost free, if there exists a family of irreducible $\G$-representations $\{ V_{j} \}_{j \in J}$ and morphisms of dg $A$-modules $\phi_{j}: A \otimes V_{j} \rightarrow M$, 
such that the induced morphism:
$$\oplus_{j \in J} A \otimes V_{j} \xrightarrow{\oplus \phi_{j}} M$$
is an isomorphism of graded $A$-modules, which means that, forgetting the differentials, this is an isomorphism between $\G$-representations. We call such $\{ V_{j}, \phi_{j} \}_{j \in J}$ the generating data for $M$.
\end{df}

\begin{exe}
All cell $A$-modules are almost free. Conversely, any cell $A$-module is obtained from the generating data together with suitable differentials.
\end{exe}
The reason for introducing the notion of almost free is that we can define the weight filtration on these data. Assume that $d(\phi_{j}(V_{j})) \subset \oplus_{i \in I}\phi_{i}(A \otimes V_{i})$. Here we restrict $\phi_{j}$ to $A^{*}\law 0 \raw \otimes V_{j} \cong V_{j}$. The left hand side has the Adams degree $|V_{j}|$ and the Adams degree of right hand side is larger than or equal to $|V_{i}|$. So we get $|V_{i}| \leq |V_{j}|$ if $d \phi_{j} \neq 0$. Hence we have the subcomplex
$$W_{n}^{J}M = \oplus_{\{j, |V_{j}| \leq n\}} \phi_{j}(A \otimes V_{j})$$
of $M$.
\begin{rmk}
The subcomplex of $W_{n}^{J}M$ is independent of the choice of the family $\{\phi_{j}\}_{j \in J}$. This is because if we choose another family $\{\phi_{j^{'}}\}$, then the same process as above shows that $\phi_{j^{'}}(V_{j^{'}}) \in W_{n}^{J}M$ and hence $W_{n}^{J^{'}}M \subset W_{n}^{J}M$. By symmetry, we get the result. So we delete the $J$ in the definition.
\end{rmk}
This gives us the increasing filtration as a dg $A$-module
$$W_{*}M: \cdots \subset W_{n}M \subset W_{n+1}M \subset \cdots \subset M$$
with $M = \cup_{n} W_{n}M$.
\

In the same way, we can define $W_{n/n^{'}}M$ as the cokernel of the inclusion $W_{n^{'}}M \rightarrow W_{n}M$ for $n \geq n^{'}$. Write $gr_{n}^{W}$ for $W_{n / n-1}$ and $W^{> n}$ for $W_{\infty / n}$.
\

$W_{n}$ defines an endofunctor in $\mathcal{CM}_{A}^{\G}$. Furthermore, $\{W_n\}_{n \in \mb{Z}}$ form a functorial tower of endofunctors on $\mathcal{KCM}_{A}^{\G}$:
$$\cdots \rightarrow W_{n} \rightarrow W_{n+1} \rightarrow \cdots \rightarrow id.$$
\begin{rmk}
\begin{itemize}
\item
The endofunctor $W_{n}$ is exact for all $n$.
\item
For $m \leq n \leq \infty$, the sequence of endofunctors $W_{m} \rightarrow W_{n} \rightarrow W_{n/m}$ can extend to a distinguish triangle of endofunctors, i.e., for any $M \in \mathcal{KCM}_{A}^{\G}$, we have a distinguish triangle $W_{m}M \rightarrow W_{n}M \rightarrow W_{n/m}M \rightarrow$ in $\mathcal{KCM}_{A}^{\G}$.
\end{itemize}
\end{rmk}

\begin{rmk}
Using the isomorphism of categories between $\mathcal{KCM}_{A}^{\G}$ and $\mathcal{D}_{A}^{\G}$, we could define the tower of exact endofunctors on $\mathcal{D}_{A}^{\G}$
$$\cdots \rightarrow W_{n} \rightarrow W_{n+1} \rightarrow \cdots \rightarrow id.$$
Similarly we define $W_{n/n^{'}}$, $gr_{n}^{W}$ and $W^{> n}$ on $\mathcal{D}_{A}^{\G}$.
\end{rmk}
The existence of the weight filtration provides a powerful tool for showing a lot of properties of dg $A$-modules.

\section{Tensor structure}  \label{tensor structure}
Recall that the Hom functor $\mathcal{H}om_{A}(M, N)$ defines a bi-exact bi-functor:
$$\mathcal{H}om_{A}: (\mathcal{KCM}_{A}^{\G})^{op} \otimes \mathcal{KCM}_{A}^{\G} \rightarrow \mathcal{D}_{A}^{\G},$$
which gives a well-defined derived functor of  $\mathcal{H}om_{A}$ between the derived categories of dg $A$-modules (also the derived categories of finite cell modules) by Proposition \ref{derived}:
$$R\mathcal{H}om_{A}: (\mathcal{D}_{A}^{\G})^{op} \otimes \mathcal{D}_{A}^{\G} \rightarrow \mathcal{D}_{A}^{\G}.$$
In this section, we use these constructions to define the tensor structure on $\mathcal{D}_{A}^{\G}$.
\

Firstly, we define a tensor structure on $\ml{T}_{A}^{\G}$. It is defined on the generator $\{A\langle r \rangle[n]\}$ by:
$$A\langle r \rangle[n] \otimes_{A} A\langle s \rangle[m] = A\langle r + s \rangle [m+n].$$
Using the approximation theorem, we get a derived tensor product $\otimes^{\mb{L}}$ on $\ml{T}_{A}^{\G}$. Then it will induce a tensor structure on $(\ml{T}_{A}^{\G})^{\natural}$. More precisely, take $(M, p), (N, q) \in (\ml{T}_{A}^{\G})^{\natural}$, then
$$(M, p) \otimes^{\mb{L}}_{A} (N, q) = (M \otimes^{\mb{L}}_{A} N, p \otimes q).$$
From above construction, we have a functor:
$$\otimes_{A}^{\mb{L}}: (\ml{T}_{A}^{\G})^{\natural} \otimes (\ml{T}_{A}^{\G})^{\natural} \to (\ml{T}_{A}^{\G})^{\natural}.$$
Next we want to extend the above tensor structure on $(\ml{T}_{A}^{\G})^{\natural}$ to $\ml{D}^{\G}_{A}$.
\

Notice that given a generalized sphere modules $A \otimes W$, where $W$ is a rational $\G$-representation, there exists a positive integer  $n$ big enough, such that $A \otimes W(n)$ is in $(\ml{T}_{A}^{\G})^{\natural}$. Here $(1)$ means the $1$ dimensional $G$- representation $\wedge^n \V$ and $n = \dim \V$.   
\begin{df} \label{df of tensor}
Given two generalized sphere modules $A \otimes W_i, i = 1,2$, and $n_i \in \mb{Z}_{\geq 0}$, such that $A \otimes W_i(n_i)$ is effective, then we define:
\begin{equation} \nonumber
(A \otimes W_1) \otimes^{\mb{L}}_{A} (A \otimes W_2) = \mathcal{H}om_{A}(A(n_1+n_2), (A \otimes W_1(n_1)) \otimes^{\mb{L}}_{A} (A \otimes W_2(n_2)) ).
\end{equation}
By the definition of the internal hom, it's easy to see that the definition is independent of the choice of $n_i$.
\end{df}
By Theorem \ref{App cell}, we get a well-defined derived functor of  $\otimes_{A}$:
$$\otimes_{A}^{\mathbb{L}}: \mathcal{D}_{A}^{\G} \otimes \mathcal{D}_{A}^{\G} \rightarrow \mathcal{D}_{A}^{\G}.$$
\begin{rmk}
Two formal properties are listed below.
\begin{itemize}
\item
These bi-functors are adjoint, i.e.,
$$R\mathcal{H}om_{A}(M \otimes_{A}^{\mathbb{L}} N, K) \cong R\mathcal{H}om_{A}(M , R\mathcal{H}om_{A}(N, K)).$$
\item
The derived tensor product makes $\mathcal{D}_{A}^{\G}$ into a triangulated tensor category with unit $A$ and $\mathcal{D}_{A}^{\G, f}$ are triangulated tensor subcategories.
\end{itemize}
These properties allow us to apply the category duality theory in \cite{LMS}.
\end{rmk}
\begin{con}
Denote $M^{\vee} = R\mathcal{H}om_{A}(M, A)$.
\end{con}
\begin{df} \label{rigid}
An object $M \in \mathcal{D}_{A}^{\G}$ is called rigid, if there exists an $N \in \mathcal{D}_{A}^{\G}$ and morphisms $\delta: A \rightarrow M \otimes_{A}^{\mathbb{L}} N$ and $\epsilon: N \otimes_{A}^{\mathbb{L}} M \rightarrow A$ such that:
$$(id_{M} \otimes \epsilon) \circ (\delta \otimes id_{M}) = id_{M}$$
$$(id_{N} \otimes \delta) \circ (\epsilon \otimes id_{N}) = id_{N}$$
\end{df}
\begin{df} \label{finite obj}
An object $M \in \mathcal{D}_{A}^{\G}$ is finite if there exists a coevaluation map $\tilde{\eta}: A \rightarrow M \otimes^{\mathbb{L}} M^{\vee}$ such that the diagram
\begin{center}
\begin{xy}
(220,0)*+{A}="v1";(260,0)*+{M \otimes^{\mathbb{L}} M^{\vee}}="v2";
(220,-20)*+{R\mathcal{H}om_{A}(M, M)}="v3";(260,-20)*+{M^{\vee} \otimes^{\mathbb{L}} M}="v4";
{\ar@{->}^{\tilde{\eta}} "v1";"v2"};{\ar@{->}^{\gamma} "v2";"v4"};
{\ar@{->}_{\eta} "v1";"v3"};{\ar@{->}_{\mu} "v4";"v3"};
\end{xy}
\end{center}

\noindent commutes. Here $\eta$ and $\mu$ are given by the adjunction. $\gamma$ changes the places of these two modules.
\end{df}
\begin{rmk}
By Theorem 1.6 of \cite{LMS}, $M$ is rigid if and only if the function
$$\epsilon_{*}: Hom_{\mathcal{D}_{A}^{\G}}(W, Z \otimes^{\mathbb{L}}_{A} N) \rightarrow Hom_{\mathcal{D}_{A}^{\G}}(W \otimes^{\mathbb{L}}_{A} M, Z)$$
is a bijection for all $W$ and $Z$, where $\epsilon_{*}(f)$ is the composite
$$W \otimes^{\mathbb{L}}_{A} M \xrightarrow{f \otimes 1} Z \otimes^{\mathbb{L}}_{A} N \otimes^{\mathbb{L}}_{A} M \xrightarrow{1 \otimes \epsilon} Z \otimes^{\mathbb{L}}_{A} A \cong Z.$$
These conditions are also equivalent to saying that $M$ is finite.
\end{rmk}
In the following, we will discuss the relations between finite objects in $\mathcal{D}_{A}^{\G}$ and finite cell modules.
\begin{df}
We say that a cell module $N$ is a summand of a cell module $M$ in $\mathcal{D}_{A}^{\G}$ if there is a homotopy equivalence of $A$-modules between $M$ and $N \oplus N^{'}$ for some cell $A$-module $N^{'}$.
\end{df}
Following the same proof as Theorem 5.7 in Part III of \cite{KM}, we can get:
\begin{lem}    
A cell module $M$ is rigid if and only if it is a summand of a finite cell module in $\mathcal{D}_{A}^{\G}$.
\end{lem}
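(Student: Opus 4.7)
The plan is to prove the two directions separately, following the pattern of Theorem~5.7 in Part~III of \cite{KM}.

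For the easy direction, that a summand of a finite cell module is rigid, the first step is to verify that every generalized sphere module $A[n]\otimes W$ is rigid in $\mathcal{D}_A^{\G}$. Since $W$ is a finite-dimensional rational representation of the reductive group $\G$, it admits a contragredient $W^{\vee}$ together with evaluation and coevaluation maps in the semisimple tensor category of $\G$-representations; tensoring these with $A[\pm n]$ yields structure maps exhibiting $A[-n]\otimes W^{\vee}$ as a dual of $A[n]\otimes W$. The class of rigid objects in a tensor triangulated category is closed under cofibers of morphisms and under retracts (see \cite{LMS}); since a finite cell module is built from finitely many generalized sphere modules by iterated cofiber attachments, every finite cell module is rigid, and hence so is every direct summand.

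For the nontrivial direction, suppose $M$ is a rigid cell module with dual $N=M^{\vee}$ and structure maps $\delta:A\to M\otimes^{\mathbb{L}}N$ and $\epsilon:N\otimes^{\mathbb{L}}M\to A$. Using the sequential filtration (enlarged so that each stage is a finite, differential-closed subcomplex), I would write $M$ as a filtered colimit $M=\mathrm{colim}_\alpha M_\alpha$ of finite cell sub-modules with inclusions $\iota_\alpha:M_\alpha\hookrightarrow M$. The functor $-\otimes^{\mathbb{L}}N$ is left adjoint to $R\mathcal{H}om_A(N,-)$, so it preserves colimits, giving $M\otimes^{\mathbb{L}}N=\mathrm{colim}_\alpha M_\alpha\otimes^{\mathbb{L}}N$. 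Combined with compactness of the unit $A$ in $\mathcal{D}_A^{\G}$ (which follows from $\mathrm{Hom}_{\mathcal{D}_A^{\G}}(A,-)\cong \mathrm{Hom}_{\G}(\mathbb{Q},H^0(-))$ together with the exactness of $\G$-invariants on rational $\G$-representations), this produces a factorization $\delta=(\iota_\alpha\otimes\mathrm{id}_N)\circ\tilde\delta$ for some $\tilde\delta:A\to M_\alpha\otimes^{\mathbb{L}}N$ and some index $\alpha$. The snake identity for $M$ then rewrites as
$$\mathrm{id}_M=(\mathrm{id}_M\otimes\epsilon)\circ(\delta\otimes\mathrm{id}_M)=\iota_\alpha\circ r,\qquad r:=(\mathrm{id}_{M_\alpha}\otimes\epsilon)\circ(\tilde\delta\otimes\mathrm{id}_M),$$
using that $\iota_\alpha$ is an $A$-module map. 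Thus $\iota_\alpha$ admits $r$ as a section, exhibiting $M$ as a direct summand of the finite cell module $M_\alpha$ in $\mathcal{D}_A^{\G}$.

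The main technical obstacle I anticipate is this colimit/compactness step: one must check carefully that the filtered union of finite cell sub-modules really realizes $M$ in $\mathcal{D}_A^{\G}$ (most cleanly by working in $\mathcal{KCM}_A^{\G}$ via Theorem~\ref{derived}), that $-\otimes^{\mathbb{L}}N$ commutes with filtered colimits in the cell-model description, and that $A$ is compact in the equivariant setting. Each of these is standard but needs care because the ambient category is that of $\G$-representations rather than ordinary vector spaces.
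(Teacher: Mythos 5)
Your proposal is correct and is essentially the same argument as the one the paper cites (Theorem~5.7 in Part~III of \cite{KM}): the easy direction via rigidity of generalized sphere modules and closure of dualizable objects under cofibers and retracts, and the hard direction via factoring the coevaluation through a finite stage of a filtered system of finite cell submodules using compactness of the unit $A$. You have also correctly flagged the technical points that need care — that the filtered union of finite differential-closed subcomplexes realizes $M$ as a homotopy colimit along cofibrations in the cell-module model, that $-\otimes^{\mathbb{L}} N$ (computed with a cell model for $N$) commutes with such colimits, and that $A$ is compact because $\mathbb{Q}$ is compact in $Rep_\G$ and $H^0$ commutes with filtered colimits — which is precisely where the Kriz--May proof spends its effort as well.
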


\begin{rmk} \label{FCM}
Let $\ml{F}\mathcal{CM}_{A}^{\G}$ be the full subcategory of $\mathcal{CM}_{A}^{\G}$ whose objects are the direct summands up to homotopy of finite cell $A$-modules. Then the homotopy category $\ml{KFCM}_{A}^{\G}$ is the idempotent completion of $\ml{D}^{\G, f}_A$. The above lemma implies that $\ml{KFCM}^{\G}_A$ is the largest rigid tensor subcategory of the derived category $\ml{D}^{\G}_A$. See section 5 in Part III of \cite{KM}. In particular, $\ml{D}^{\G, f}_A$ is a rigid tensor subcategory of $\ml{KFCM}^{\G}_A$. 
\end{rmk}
\begin{thm} \label{criterion for rigid}
Let $A$ be an Adams connected cdga over $\G$. Then $M \in \mathcal{D}_{A}^{\G}$ is rigid if and only if $M \in \mathcal{D}_{A}^{\G, f}$, which implies that there is an equivalence between $\ml{D}_A^{\G, f}$ and $\ml{KFCM}^{\G}_A$.
\end{thm}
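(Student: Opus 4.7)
The proof splits into two directions. For $\mathcal{D}_A^{\G, f} \subset \{\text{rigid objects}\}$, each generalized sphere module $A[n] \otimes W$ with $W$ a finite-dimensional rational $\G$-representation is rigid with dual $A[-n] \otimes W^{\vee}$; the evaluation and coevaluation arise by tensoring the evaluation/coevaluation for $W$ in the category of $\G$-representations with the unit pairing $A \otimes_A^{\mathbb{L}} A \cong A$. Since rigid objects form a thick subcategory of any triangulated tensor category (closed under cones, shifts, and direct summands), and every finite cell module is built by finitely many cone attachments from generalized sphere modules (the remark after Definition \ref{df of cell modules}), every $M \in \mathcal{D}_A^{\G, f}$ is rigid.

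For the converse, let $M \in \mathcal{D}_A^{\G}$ be rigid. By Theorem \ref{App cell} we may represent $M$ by a cell module, and the lemma immediately preceding the theorem then identifies $M$ as a direct summand up to homotopy of some finite cell module $N$; equivalently, $M$ is the image of an idempotent $e \in H^0(\mathcal{H}om_A(N,N))$. What remains is to upgrade this to a quasi-isomorphism $\widehat{M} \to M$ with $\widehat{M}$ an honest finite cell module.

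The essential input is Adams connectedness: $A^0\law 0 \raw = \mathbb{Q}$ and $A = \bigoplus_{r \geq 0} A \law r \raw$. Since $N$ is built from finitely many generators in weights $|V_j| \leq W$ for some bound $W$, one has $H^*(N)\law r \raw = 0$ outside $0 \leq r \leq W$ and each bidegree piece is finite dimensional; these properties are inherited by $M$. I would then rerun the approximation construction of Theorem \ref{App cell}, organized by increasing Adams weight: at weight $0$ one attaches cells to surject onto, and then kill excess classes in, $H^*(M)\law 0 \raw$; Adams connectedness ensures that the resulting attachments only contribute to cohomology in weight $0$. Iterating through $r = 1, 2, \ldots, W$, the finite dimensionality in each bidegree together with the upper bound $r \leq W$ forces only finitely many cells to be attached altogether. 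The procedure terminates with a finite cell module $\widehat{M}$ and a quasi-isomorphism $\widehat{M} \to M$, proving $M \in \mathcal{D}_A^{\G, f}$. The asserted equivalence $\mathcal{D}_A^{\G, f} \simeq \ml{KFCM}_A^{\G}$ now follows formally from Remark \ref{FCM}, since we have just shown $\mathcal{D}_A^{\G, f}$ is already idempotent complete.

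The main obstacle will be the weight-by-weight bookkeeping in the last construction: one has to show that the weight filtration of Section \ref{weight} really does decouple the strata enough that each inductive stage uses only finitely many cells and that the new attachments at weight $r$ do not destroy what was arranged at weights $< r$. This is the step that genuinely uses Adams connectedness rather than merely cohomological connectedness, and parallels the minimal cell model arguments of \cite{KM}, Part III, Section 5.
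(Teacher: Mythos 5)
Your overall architecture matches the paper's: reduce both directions to the criterion that $M$ is rigid iff it is a summand up to homotopy of a finite cell module (the lemma immediately before the theorem), and then establish what the paper calls Lemma \ref{closed under summand} — a summand of a finite cell module is itself quasi-isomorphic to a finite cell module. Your first direction (generalized sphere modules are rigid, rigid objects form a thick subcategory) is fine.

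For the summand lemma, however, you take a genuinely different route from the paper, and as written it has a gap. You assert that if $N$ is built from finitely many generators $V_j$ with $|V_j| \leq W$, then $H^{*}(N)\law r\raw = 0$ for $r > W$. This is false in general: $N$ is $\bigoplus_j A[-n_j]\otimes V_j$ as a graded $A$-module, so $N\law r\raw = \bigoplus_j A\law r - |V_j|\raw[-n_j]\otimes V_j$, which for an Adams connected $A$ is typically nonzero for \emph{all} $r \geq \min_j |V_j|$ (take $N = A$ itself). What you actually need is the analogous statement for $q(N) = N\otimes_A \mathbb{Q}$, which is the finite complex $\bigoplus_j V_j[-n_j]$; this \emph{is} supported in $0 \leq r \leq W$, bounded, and finite-dimensional. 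Consequently, ``rerunning Theorem \ref{App cell}'' — which attaches cells to match $H^{*}(M)$, not $H^{*}(q M)$ — does not terminate for the reason you give, and the weight-by-weight bookkeeping you flag as the main obstacle is indeed where the argument currently breaks.

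The fix closest in spirit to your sketch is: since $q(M)$ is a summand of $q(N)$ in $D^b(\G)$, $H^{*}(qM)$ is a bounded, finite-dimensional complex of $\G$-representations supported in weights $[0,W]$; take a \emph{minimal} cell model $P$ of $M$ (Section \ref{heart}), for which $q(P)$ has zero differential and hence $q(P) \cong H^{*}(qM)$ is finite, so $P$ is a finite cell module. The paper instead avoids the minimal model machinery by a direct induction on the length of the weight filtration: it observes that $W_n M = A\otimes V$ with $V$ a finite complex of $\G$-representations with zero differential, that the idempotent $p$ restricts on $W_n M$ to $\mathrm{id}_A \otimes q$ for a genuine idempotent $q$ on $V$ (here Adams connectedness enters: $A\law 0\raw = \mathbb{Q}$ forces $p$ to preserve the bottom weight stratum on the nose), so $W_n N \cong A\otimes \mathrm{im}(q)$, and then replaces $N$ by the cone of $W_{n+r/n}N \to W_n N[1]$ and induces on $r$. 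I would either adopt the paper's weight-filtration induction or carefully recast your argument in terms of $q(M)$ and minimal cell models, invoking conservativity of $q$ on Adams-degree-bounded-below cell modules to conclude that the finitely-many-cell approximation is already a quasi-isomorphism.

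Your final paragraph (idempotent completeness of $\mathcal{D}^{\G,f}_A$ and the identification with $\ml{KFCM}^{\G}_A$ via Remark \ref{FCM}) is correct once the summand lemma is in place.
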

\noindent $Proof.$ It depends on the following lemma.
\begin{lem} \label{closed under summand}
Assume that $A$ is an Adams connected cdga over $\G$. Let $M$ be a finite cell $A$-module. Suppose $N$ is a summand of $M$ in $\mathcal{D}_{A}^{\G}$. Then there is a finite $A$-cell module $M^{'}$ with $N \cong M^{'}$ in $\mathcal{D}_{A}^{\G}$.
\end{lem}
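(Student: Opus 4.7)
The plan is to proceed by induction on the number of distinct weights occurring in the finite cell module $M$. Since $A$ is Adams connected, Section \ref{weight} equips $\mathcal{D}_A^{\G}$ with an exact weight-filtration endofunctor $W_n$, and because every additive endofunctor respects finite direct sums, the decomposition $M \cong N \oplus N'$ in $\mathcal{D}_A^{\G}$ yields $W_n M \cong W_n N \oplus W_n N'$ and $gr_n^W M \cong gr_n^W N \oplus gr_n^W N'$ for every $n$. Because $M$ is a finite cell module, the weights appearing in its generating data form a finite interval $[a,b]$, which reduces the problem to a finite induction.

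For the base case, suppose $M$ is concentrated in a single weight $a$, so $M = \bigoplus_j A[-n_j] \otimes V_j$ with $|V_j|=a$ for all $j$. Writing $db_j = \sum a_{ij} b_i$, the weight balance forces $|a_{ij}|=0$, and Adams connectedness ($A^{*}\langle 0 \rangle = \mathbb{Q}$, concentrated in cohomological degree $0$) forces $a_{ij} \in \mathbb{Q}$ with $n_i = n_j+1$. Hence $M \cong A \otimes C$ where $C = \bigoplus_j V_j[-n_j]$ is a bounded complex of pure-weight-$a$ $\G$-representations with a $\mathbb{Q}$-linear, $\G$-equivariant differential. The free--forget adjunction combined with the observation that a weight-preserving morphism $C \to A \otimes C$ must land in the weight-$a$ summand $A^*\langle 0 \rangle \otimes C = C$ gives
$$Hom_{\mathcal{D}_A^{\G}}(A\otimes C,\, A\otimes C) \;\cong\; Hom_{D(\G)}(C,\, A\otimes C) \;\cong\; Hom_{D(\G)}(C,\, C).$$
The idempotent on $M$ cutting out $N$ therefore corresponds to an idempotent $q \in End_{D(\G)}(C)$, and by semisimplicity of the category of $\G$-representations $q$ splits strictly as $C = C_1 \oplus C_2$. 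Hence $N \simeq A \otimes C_1$, which is a finite cell $A$-module.

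For the inductive step, assume the conclusion for modules with weight range strictly smaller than $[a,b]$. The weight-filtration triangle
$$W_{b-1} M \longrightarrow M \longrightarrow gr_b^W M \longrightarrow W_{b-1} M[1]$$
splits compatibly with $M \cong N \oplus N'$ and produces a distinguished triangle $W_{b-1} N \to N \to gr_b^W N \to W_{b-1} N[1]$. Since $W_{b-1} M$ is a finite cell module with weights in $[a,b-1]$, the inductive hypothesis provides a finite cell model $F_1 \simeq W_{b-1} N$; since $gr_b^W M$ has pure weight $b$, the base case provides a finite cell model $F_2 \simeq gr_b^W N$. By Theorem \ref{Whitehead} together with Remark \ref{Hom_K = Hom_D}, the connecting map $F_2[-1] \to F_1$ in $\mathcal{D}_A^{\G}$ is represented by an honest morphism of finite cell modules, and its cone is a finite cell $A$-module quasi-isomorphic to $N$.

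The delicate step is the base case, specifically the identification $Hom_{D(\G)}(C, A\otimes C) \cong Hom_{D(\G)}(C,C)$. This is where Adams connectedness is genuinely used: only the hypothesis that $A^*\langle 0 \rangle$ is concentrated in cohomological degree $0$ prevents weight-preserving morphisms into $A \otimes C$ from acquiring higher cohomological contributions from $A$. Without it, $A$-module summands of $A \otimes C$ could fail to be of the form $A \otimes C'$, semisimplicity of $\G$-representations would not suffice to split the idempotent, and the entire inductive scheme would break down.
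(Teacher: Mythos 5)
Your proof is correct and takes essentially the same approach as the paper: induction on the weight range via the weight filtration, Adams connectedness to identify the pure-weight graded piece with $A \otimes C$ for a complex $C$ of pure-weight $\G$-representations, semisimplicity to split the resulting idempotent, and the cone construction together with Theorem \ref{Whitehead} and Remark \ref{Hom_K = Hom_D} to reassemble $N$. The only difference is cosmetic — you peel off the top weight ($gr_b^W$) whereas the paper peels off the bottom weight ($W_n$) — and in the base case you should note explicitly that $C$ may first be replaced by its cohomology with zero differential (as the paper does) before asserting that the idempotent $q$ splits as a direct sum decomposition of $C$.
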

\begin{proof} 
By Theorem \ref{App cell}, we can assume that $N$ is a cell module. By our assumption, we have $M = N \oplus N^{'}$ in $\mathcal{KCM}_{A}^{\G}$. Since $M$ is finite, there is a minimal $n$ such that $W_{n}M \neq 0$. Thus $W_{n-1}N$ is homotopy equivalent to zero. We may assume that $W_{n-1}N = 0$ in $\mathcal{CM}_{A}^{\G}$. Similarly, we may assume that $M = W_{n+r}M$ and $N = W_{n+r}N$ in $\mathcal{CM}_{A}^{\G}$ for some $r \geq 0$. Then we proceed by induction on $r$.
\

Choose generating data $\{ V_{j}, \phi_{j} \}_{j \in J}$ for $W_{n}M$. Let us prove that $W_{n}M = A \otimes V$ for a finite complex of $\G$-representations $V$. Indeed, by the definition of the weight functor and $W_{n-1}M = 0$, we can get an isomorphism:
$$W_{n}M = \oplus_{|V_{j}| = n} \phi_{j}(A \otimes V_{j}).$$
Notice that $d(\phi_{j}(V_{j})) \subset \oplus_{i}\phi_{i}(A \otimes V_{i})$ and  all these $|V_{i}|$'s have the same value. Using $A^{*}\law 0 \raw = \mathbb{Q}$, we get $d(\phi_{j}(V_{j})) \subset \oplus \phi_{i}(V_{i})$. Set $V = \oplus_{j \in J} \phi_{j}(V_{j})$, which is a complex of $\G$-representations. So we have $W_{n}M = A \otimes V$ as dg $A$-modules. Because the category of $\G$-representations is semisimple, we can assume that all differentials of $V$ are zero. 
\

Let $p: M \rightarrow M$ be the composition of the projection $M \rightarrow N$ and the inclusion $N \rightarrow M$. Then we can see $W_{n}p = id \otimes q$, where $q: V \rightarrow V$ is an idempotent of $V$. $V$ is a direct sum of $\G$-representations with some shifts. Thus $W_{n}N \cong A \otimes im(q)$. We finish the case of $r = 0$.
\

Using the distinguished triangle
$$W_{n}N \rightarrow N \rightarrow W_{n+r/n}N \rightarrow W_{n}N[1],$$
we can replace $N$ with the shifted cone of the map $W_{n+r/n}N \rightarrow A \otimes im(q)[1]$. Since $W_{n+r/n}N$ is a summand of $W_{n+r/n}M$, by induction, we get that $W_{n+r/n}$ is homotopy equivalent to a finite cell module. So the cone of $W_{n+r/n}N \rightarrow A \otimes im(q)$ is also homotopy equivalent to a finite cell module.
\end{proof}
\begin{cor}
Assume $A$ is an Adams connected cdga over $\G$. Then $\ml{D}_A^{\G, f}$ is idempotent complete.
\end{cor}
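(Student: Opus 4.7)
The plan is to deduce this corollary as an immediate consequence of Lemma \ref{closed under summand}, with no real new work required. Recall that a triangulated category is idempotent complete precisely when every idempotent endomorphism $e : X \to X$ splits; equivalently, the natural embedding of the category into its idempotent completion is essentially surjective.

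First I would appeal to Remark \ref{FCM}, which identifies the idempotent completion of $\ml{D}_A^{\G, f}$ with $\ml{KFCM}_A^{\G}$, whose objects are by construction the direct summands (up to homotopy) of finite cell $A$-modules, viewed as objects of $\ml{D}_A^{\G}$. Thus idempotent completeness of $\ml{D}_A^{\G, f}$ is equivalent to the assertion that every object of $\ml{KFCM}_A^{\G}$ already lies in the essential image of $\ml{D}_A^{\G, f}$.

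To see this, take an arbitrary $N \in \ml{KFCM}_A^{\G}$, presented as a summand in $\ml{D}_A^{\G}$ of some finite cell $A$-module $M$. Lemma \ref{closed under summand} (which requires $A$ to be Adams connected) produces a finite cell $A$-module $M'$ with $N \cong M'$ in $\ml{D}_A^{\G}$, and hence $N \in \ml{D}_A^{\G, f}$. This finishes the proof.

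There is no real obstacle at the level of this corollary: everything substantive is already packaged into Lemma \ref{closed under summand}, where Adams connectedness is used essentially to split the lowest-weight piece of a finite cell module as $W_n M \cong A \otimes V$ for a finite complex of $\G$-representations $V$, so that an abstract summand of $M$ can be detected by a concrete idempotent on $V$ and lifted inductively along the finite weight tower.
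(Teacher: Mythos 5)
Your proposal is correct and follows exactly the route the paper intends: Remark \ref{FCM} identifies $\ml{KFCM}_A^{\G}$ with the idempotent completion of $\ml{D}_A^{\G,f}$, and Lemma \ref{closed under summand} shows the fully faithful inclusion $\ml{D}_A^{\G,f} \hookrightarrow \ml{KFCM}_A^{\G}$ is essentially surjective, hence an equivalence. This is precisely what the paper is packaging in Theorem \ref{criterion for rigid} and its corollary, so there is nothing to add.
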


\section{Base change}
\begin{lem} \label{cell preserve q-i}  
Let $N$ be a cell module. Then the functor $M \otimes_{A} N$ preserves exact sequences and quasi-isomorphisms in the variable $M$.
\end{lem}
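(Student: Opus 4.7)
The plan is to reduce to a generalized sphere module and then induct along the sequential filtration of $N$.

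First I would handle the base case: if $N = A[-n] \otimes V$ is a generalized sphere $A$-module, then there is a natural isomorphism
$$M \otimes_A (A[-n] \otimes V) \cong M[-n] \otimes_{\mb{Q}} V$$
of complexes of $\G$-representations. Since $V$ is a $\mb{Q}$-vector space (hence flat) and cohomological shifting is exact, the functor $M \mapsto M[-n] \otimes_{\mb{Q}} V$ visibly preserves short exact sequences and, by the K\"unneth formula for a flat factor, also preserves quasi-isomorphisms. More generally, a direct sum of generalized sphere modules works the same way, using that arbitrary direct sums of quasi-isomorphisms of complexes of $\mb{Q}$-vector spaces are quasi-isomorphisms.

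Next, let $\{N_n\}_{n \geq 0}$ be the sequential filtration of $N$ from the remark after Definition \ref{df of cell modules}. By construction, for each $n$ we have a short exact sequence of dg $A$-modules
$$0 \to N_n \to N_{n+1} \to F_n \to 0$$
where $F_n$ is a direct sum of generalized sphere modules. Tensoring with $M$ over $A$ remains a short exact sequence of complexes of $\G$-representations, because each $F_n$ splits off as a graded $A$-module (the extension is only in the differential), so $- \otimes_A -$ is exact on this sequence term by term. I would then induct on $n$: assuming $M \otimes_A N_n$ computes the derived tensor product and preserves quasi-isomorphisms in $M$, the long exact sequence in cohomology together with the base case applied to $F_n$ and the five-lemma give the claim for $N_{n+1}$.

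Finally, to pass from the finite stages $N_n$ to $N$ itself, I would use that $N = \varinjlim_n N_n$ with injective transition maps, and that $- \otimes_A -$ commutes with filtered colimits (being defined degreewise). Since filtered colimits of quasi-isomorphisms of $\mb{Q}$-complexes are quasi-isomorphisms (cohomology commutes with filtered colimits in $\Ab$), the statement for $N$ follows from the statements for the $N_n$'s. The main technical point to be careful about is the inductive step: one must check that the short exact sequence $0 \to N_n \to N_{n+1} \to F_n \to 0$ really is termwise split as a sequence of graded $A$-modules — which is exactly the content of the almost-free description of a cell module — so that tensoring preserves exactness; everything else is formal homological algebra.
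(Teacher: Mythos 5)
Your proof is correct and takes essentially the same approach as the paper: reduce to generalized sphere modules via the observation that $M \otimes_A (A[-n] \otimes V) \cong M[-n] \otimes_{\mb{Q}} V$, induct along the sequential filtration using the termwise-split short exact sequences $0 \to N_n \to N_{n+1} \to F_n \to 0$, and pass to the colimit. The paper's proof is a two-sentence sketch of exactly this; you have filled in the details (the identification for sphere modules, the graded-split observation, the five-lemma step, the commutation with filtered colimits) correctly.
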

\begin{proof}
Because $N$ is a cell module, $N$ has a sequential filtration $\{N_{n}\}$, and $N_{n+1}$ is given by the extension of $N_{n}$ by the generalized sphere modules. Notice that  Lemma \ref{cell preserve q-i} is true for generalized sphere modules. By the induction of the filtration and passage to the colimits, we get the result for the general case.
\end{proof}
If $\phi: A \rightarrow B$ is a homomorphism of cdgas over $\G$, we have the functor
$$ \otimes_{A} B: \mathcal{M}_{A}^{\G} \rightarrow \mathcal{M}_{B}^{\G}$$
which induces a functor on cell modules and the homotopy category
$$\phi_{*}: \mathcal{KCM}_{A}^{\G} \rightarrow \mathcal{KCM}_{B}^{\G}.$$
So we have a base change functor on the derived categories level,
$$\phi_{*}: \mathcal{D}_{A}^{\G} \rightarrow \mathcal{D}_{B}^{\G}.$$
\begin{rmk}
The restriction of $\phi_{*}$ on finite objects gives the functor on the bounded case.
\end{rmk}
\begin{prop} \label{preserve under qi}
If $\phi$ is a quasi-isomorphism, then $\phi_{*}$ is an equivalence of tensor triangulated categories.
\end{prop}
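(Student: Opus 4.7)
The plan is to establish that $\phi_*$ is fully faithful, essentially surjective, and monoidal; the monoidal compatibility is formal. By Theorem \ref{derived} I work in $\mathcal{KCM}_A^G$ and $\mathcal{KCM}_B^G$, where $\phi_* = -\otimes_A B$ visibly sends a cell $A$-module built on $\{A[n_j]\otimes V_j\}$ with attaching data $a_{ij}\in A$ to the cell $B$-module built on $\{B[n_j]\otimes V_j\}$ with data $\phi(a_{ij})\in B$. On Tate-type generators one has $\phi_*(A\langle r\rangle[n]) = B\langle r\rangle[n]$, so $\phi_*$ is strictly monoidal on $\mathcal{T}_A^G$; extending through idempotent completion and Definition \ref{df of tensor} (which recovers the general tensor from the Tate case via $R\mathcal{H}om$) gives the monoidal structure on $\mathcal{D}_A^G$, using that each $A\langle r\rangle[n]$ is rigid with dual $A\langle -r\rangle[-n]$, so the relevant $R\mathcal{H}om$ untwist is just tensoring with a Tate object and commutes with $\phi_*$.

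The central technical input is the following claim: for any cell $A$-module $M$, the natural map $\iota_M\colon M \to M\otimes_A B$ is a quasi-isomorphism of underlying $\mathbb{Q}$-complexes. I would prove it by induction on the sequential filtration $\{M_n\}$. For a generalized sphere $A[n]\otimes W$, the map becomes $\phi[n]\otimes\mathrm{id}_W$, a quasi-isomorphism since $\phi$ is and tensoring over $\mathbb{Q}$ with a representation preserves quasi-isomorphisms. The inductive step follows because $-\otimes_A B$ preserves cones and direct sums (being a left adjoint at the module level), so the five-lemma applied to the defining cone sequence of $M_{n+1}$ yields that $\iota_{M_{n+1}}$ is a quasi-iso. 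Passage to the colimit $M = \mathrm{colim}_n M_n$ uses the exactness of filtered colimits of $\mathbb{Q}$-vector spaces together with Lemma \ref{cell preserve q-i}.

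Full faithfulness then follows from the standard adjunction with the restriction of scalars $\phi^*$: for cell $A$-modules $M$ and $N$, combining Remark \ref{Hom_K = Hom_D} with the module-level adjunction gives
\begin{equation} \nonumber
\mathrm{Hom}_{\mathcal{D}_B^G}(\phi_* M, \phi_* N) \cong \mathrm{Hom}_{\mathcal{D}_A^G}(M, \phi^*\phi_* N) = \mathrm{Hom}_{\mathcal{D}_A^G}(M, N\otimes_A B) \cong \mathrm{Hom}_{\mathcal{D}_A^G}(M, N),
\end{equation}
where the last isomorphism is induced by $\iota_N$, a quasi-iso of $A$-modules by the claim. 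For essential surjectivity, observe that $\phi_*(A[n]\otimes W) = B[n]\otimes W$, so every generalized sphere $B$-module lies in the essential image. A general cell $B$-module $N$ is built by iterated cone attachments $F_k \to N_k \to N_{k+1}$ from such spheres; by the full faithfulness just established, each attaching map lifts to $\mathcal{D}_A^G$, enabling an inductive construction of a cell $A$-module $\tilde N$ with $\phi_*\tilde N \simeq N$ in $\mathcal{D}_B^G$.

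The main obstacle is the colimit step in both arguments: one must check that the lifted cell structure assembles into an honest cell $A$-module whose base change actually agrees with $N$, not merely up to coherent homotopy. This is resolved by carrying the induction through the same scheme used in the proof of Theorem \ref{App cell}, invoking the full faithfulness statement at each stage to identify the hom groups governing the lifted attachment data and to see that the resulting colimit is identified with $N$ on the nose after applying $\phi_*$.
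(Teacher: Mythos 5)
Your central technical claim --- that for a cell $A$-module $M$ the unit $\iota_M\colon M \to M\otimes_A B$ is a quasi-isomorphism of underlying complexes --- is precisely the key step in the paper's proof, and your induction over the sequential filtration (generalized spheres as base case, preservation under cones, passage to filtered colimits via Lemma~\ref{cell preserve q-i}) is the same argument. The deduction of full faithfulness via the module-level adjunction $\mathrm{Hom}_{\mathcal{M}_B^{\G}}(B\otimes_A M, N)\cong \mathrm{Hom}_{\mathcal{M}_A^{\G}}(M,\phi^*N)$ promoted to $\mathcal{K}$ and then to $\mathcal{D}$ through Remark~\ref{Hom_K = Hom_D} also matches the paper. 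The monoidal discussion through Tate generators, idempotent completion, and Definition~\ref{df of tensor} is a reasonable account of a step the paper leaves implicit.

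Where you diverge is essential surjectivity, and there you have an acknowledged gap that is not actually filled. Lifting the cell structure of a cell $B$-module $N$ one attachment at a time yields, at each finite stage, only an isomorphism $\phi_*\widetilde N_k \cong N_k$ in $\mathcal{D}_B^{\G}$; to pass to the colimit you need these identifications to be realized by honest level-wise quasi-isomorphisms compatible with the transition maps, and gesturing at ``the same scheme as Theorem~\ref{App cell}'' does not produce them. The paper avoids this entirely by checking the \emph{counit} of the adjunction. Concretely: given $N$, use Theorem~\ref{App cell} to choose a cell $A$-module $\widehat N$ with a quasi-isomorphism $\widehat N\to\phi^*N$, and consider the composite $B\otimes_A\widehat N\to B\otimes_B N\cong N$. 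Applying $\phi^*$ (which is the identity on underlying complexes, hence reflects quasi-isomorphisms) and noting that $\widehat N\to\phi^*(B\otimes_A\widehat N)\to\phi^*N$ is exactly the chosen quasi-isomorphism $\widehat N\to\phi^*N$, two-out-of-three together with your unit claim shows $B\otimes_A\widehat N\to N$ is a quasi-isomorphism. This gives $\phi_*\widehat N\cong N$ in one stroke, with no colimit bookkeeping. You should replace your cell-lifting paragraph by this counit argument; the rest of the proposal stands.
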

\begin{proof} Firstly, there is an isomorphism:
$$Hom_{\mathcal{M}_{B}^{\G}}(B \otimes_{A} M, N) \cong Hom_{\mathcal{M}_{A}^{\G}}(M, \phi^{*}N),$$
for $M \in \mathcal{M}_{A}^{\G}$ and $N \in \mathcal{M}_{B}^{\G}$. Here $\phi^{*}$ is the pullback functor, which means that, for a given dg $B$-module, there is a natural dg $A$-module structure.
\

Then we have:
$$Hom_{\mathcal{K}_{B}^{\G}}(B \otimes_{A} M, N) \cong Hom_{\mathcal{K}_{A}^{\G}}(M, \phi^{*}N).$$
Using Remark \ref{Hom_K = Hom_D}, we get:
\begin{equation} \nonumber
\begin{split}
& Hom_{\mathcal{D}_{B}^{\G}}(B \otimes_{A} M, N) \cong  Hom_{\mathcal{K}_{B}^{\G}}(B \otimes_{A}  \widehat{M}, N) \\
\cong & Hom_{\mathcal{K}_{A}^{\G}}(B \otimes_{A}  \widehat{M}, \phi^{*}N) \cong Hom_{\mathcal{D}_{A}^{\G}}(B \otimes_{A}  M, \phi^{*}N),
\end{split}
\end{equation}
where $\widehat{M}$ is a cell $A$-module quasi-isomorphic to $M$.
\

Next, we will check that the unit of the adjunction and the counit are both quasi-isomorphisms.
\

For the unit of the adjunction, if $M$ is a cell dg $A$-module, then
$$\phi \otimes Id: M \cong A \otimes_{A} M \to \phi^{*}(B \otimes_{A} M)$$
is a quasi-isomorphism of $A$-modules. Firstly, assume that $M = B$. By assumption, we know that $\phi^{*}B$ is quasi-isomorphic to $A$ as a dg $A$-module. Then assume $M = A[n] \otimes W$ for $W$ a $\G$-representation.  $\phi^{*}(B \otimes_{A} A[n] \otimes W)$ is the same as $\phi^{*}(B[n] \otimes W)$. The latter is naturally quasi-isomorphic to $A[n] \otimes W$. If $M$ is cell module, using the induction on the length of its sequential filtration, we can get the desired quasi-isomorphism.
\

For the counit part, given $N$ a dg $B$-module, and choosing a quasi-isomorphim of dg $B$-module $\widehat{N} \to N$, where $\widehat{N}$ is cell $B$-module, then we have:
$$B \otimes_{A} \widehat{N} \to B \otimes_{B} N \cong N,$$
which is also a quasi-isomorphism.
\end{proof}

\begin{cor}
Assume that $A$ and $B$ are Adams connected cdgas over $\G$. If $\phi$ is a quasi-isomorphism, then $$\phi_{*}: \mathcal{D}_{A}^{ \G, f} \rightarrow \mathcal{D}_{B}^{\G, f}$$
is an equivalence of triangulated tensor categories.
\end{cor}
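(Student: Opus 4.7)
The plan is to deduce this corollary from Proposition \ref{preserve under qi} together with the rigidity criterion of Theorem \ref{criterion for rigid}. Proposition \ref{preserve under qi} already provides an equivalence $\phi_*: \mathcal{D}_A^{\G} \to \mathcal{D}_B^{\G}$ of tensor triangulated categories whenever $\phi$ is a quasi-isomorphism; the only remaining content is that this equivalence restricts to an equivalence between the subcategories of finite objects on each side.

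First, I would observe that an equivalence of tensor triangulated categories automatically sends rigid objects to rigid objects in either direction, since rigidity is expressed purely in terms of the tensor product, the unit, and the evaluation/coevaluation morphisms (Definition \ref{rigid}), all of which are preserved by $\phi_*$ and its quasi-inverse. Next, I would invoke Theorem \ref{criterion for rigid}: because $A$ and $B$ are both Adams connected, the subcategories $\mathcal{D}_A^{\G, f} \subset \mathcal{D}_A^{\G}$ and $\mathcal{D}_B^{\G, f} \subset \mathcal{D}_B^{\G}$ are precisely the full subcategories of rigid objects. Combining these two facts, $\phi_*$ must send $\mathcal{D}_A^{\G, f}$ into $\mathcal{D}_B^{\G, f}$ and, conversely, the quasi-inverse must send $\mathcal{D}_B^{\G, f}$ into $\mathcal{D}_A^{\G, f}$. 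Hence the restriction is an equivalence of triangulated tensor categories.

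As a sanity check I would also verify the essential surjectivity directly at the level of finite cell modules: for a finite cell $B$-module $N$ one can build a finite cell $A$-module $M$ with the same generating data $\{V_j, \phi_j\}_{j \in J}$, using that the differentials, which a priori lie in $B$, can be pulled back along the quasi-isomorphism $\phi$ up to homotopy, and then applying the approximation and Whitehead theorems (Theorems \ref{App cell} and \ref{Whitehead}) to realize $N$ as $\phi_*(M)$ in $\mathcal{D}_B^{\G}$. This reproves essential surjectivity without appealing to rigidity, and makes the preservation of the finite subcategory transparent.

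The only mild subtlety, and the step I would want to be careful about, is the first one: making sure that being rigid is really invariant under an equivalence of tensor triangulated categories. This is a formal categorical fact once one knows $\phi_*$ is strong monoidal (that is, it preserves $\otimes^{\mathbb{L}}$ and the unit $A \mapsto B$ compatibly), which is built into the statement of Proposition \ref{preserve under qi}; no new computation is needed. Everything else is a direct appeal to the already-proved results in Sections \ref{construction for the derived category} and \ref{tensor structure}.
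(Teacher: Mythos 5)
Your main argument is exactly the paper's: invoke Proposition \ref{preserve under qi} for the equivalence on the full derived categories, note that tensor triangulated equivalences preserve rigid objects, and apply Theorem \ref{criterion for rigid} to identify the finite subcategory on each side with the rigid objects. Your supplementary ``sanity check'' is informal (a quasi-isomorphism $\phi: A \to B$ has no section, so pulling the differentials back to $A$ really does require the cell approximation and Whitehead machinery, not just a direct lift) but you flag it as optional, and it does not affect the validity of the main argument.
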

\begin{proof}
Notice that an equivalence between tensor triangulated categories induces an equivalence on the subcategories of rigid objects. By Proposition \ref{preserve under qi}, we know that $\mathcal{D}_{A}^{\G}$ and  $\mathcal{D}_{B}^{\G}$ are equivalent. Then by Theorem \ref{criterion for rigid}, we know that $\phi$ induces an equivalence between $\mathcal{D}_{A}^{\G, f}$ and $\mathcal{D}_{B}^{\G, f}$.
\end{proof}

\begin{rmk} \label{imp trick}
For any cdga $A$ over $\G$, we have a morphism $\delta: \mb{Q} \to A$, which sends $A^*\law 0 \raw$ to $A$.
Then, for any $M \in \mathcal{M}_{\mb{Q}}^{\G}$ and $N \in \mathcal{M}_{A}^{\G}$, we have:
$$Hom_{\mathcal{D}_{A}^{\G}}(A \otimes M, N) \cong Hom_{\mathcal{D}_{\mb{Q}}^{\G}}(M, \delta^{*}N).$$
Here $\delta^{*}$ is the forgetful functor, which forgets the $A$-module structure. We will omit $\delta^{*}$ in the computation later.
\end{rmk}

\section{Minimal models} \label{min}
In the rest of this chapter, we always assume that the cdgas are Adams connected.
\begin{df}
A cdga $A$ over $\G$ is said to be generalized nilpotent if:
\begin{itemize}
\item
$A$ is a free commutative graded algebra over $\G$, i.e, $A = Sym^{*}E$ for some $\mathbb{Z}_{> 0}$-graded $\G$-representations $E$. (Or a complex of $\G$-representations concentrated in position degrees and with zero differentials). 
\item
For $n \geq 0$, let $A \langle n \rangle \subset A$ be the subalgebra generated by the elements of degree $\leq n$. Set $A \langle n+1, 0 \rangle = A \langle n \rangle$ and for $q \geq 0$ define $A \langle n+1, q+1 \rangle$ inductively as the subalgebra generated by $A \langle n \rangle$ and $$A \langle n+1, q+1 \rangle^{n+1} = \{ x \in A \langle n+1 \rangle | dx \in A \langle n+1, q \rangle \}.$$ Then for all $n \geq 0$, $A \langle n+1 \rangle = \cup_{q \geq 0}A \langle n+1,q \rangle.$
\end{itemize}
A cdga $A$ over $\G$ is called nilpotent, if for each $n \geq 1$, there is a $q_{n} \in \mb{Z}_{\geq 0}$ such that $A \langle n \rangle = A \langle n, q_{n} \rangle$ in the second condition above.
\end{df}

\begin{df}
A connected cdga $A$ over $\G$ is minimal if it is a free commutative graded algebra over $\G$ with decomposable differential: $d(A) \subset {(IA)}^{2}$. $IA$ is the fundamental ideal, i.e., $IA = Ker(A \rightarrow \mathbb{Q} \cong A^{0}\law 0 \raw)$.
\end{df}
\begin{con} 
For a cdga $A$ over $\G$, we let $QA$ be $IA/ (IA \cdot IA)$.
\end{con}
\begin{prop} \label{generalized nil equi minimal}
If a connected cdga $A$ over $\G$ is generalized nilpotent, then it is minimal. Conversely, if $A$ is a minimal connected cdga over $\G$ and $A^{q} \law r \raw = 0$ unless $2r \geq q$, then $A$ is generalized nilpotent.
\end{prop}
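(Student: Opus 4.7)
The plan is to treat the two implications separately, relying on Adams-connectedness throughout and on the hypothesis $A^q \law r \raw = 0$ unless $2r \geq q$ only for the converse.

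For generalized nilpotent $\Rightarrow$ minimal, my first step is to verify that for each $n$ the subalgebra $A\langle n+1\rangle = \bigcup_q A\langle n+1, q\rangle$ is closed under $d$: the inductive construction directly yields $dy \in A\langle n+1, q\rangle \subset A\langle n+1\rangle$ for $y \in A\langle n+1, q+1\rangle^{n+1}$, and closure on generators of lower cohomological degree follows by induction on $n$ together with $A\langle n\rangle = A\langle n+1, 0\rangle$. It follows that for any generator $x \in E^{n+1}$ one has $dx \in A\langle n+1\rangle^{n+2} = Sym^*(E^{\leq n+1})^{n+2}$. Any monomial of cohomological degree $n+2$ in generators of degree at most $n+1$ must contain at least two factors, so $dx \in (IA)^2$; the Leibniz rule extends this from generators to all of $A$, giving $d(A) \subset (IA)^2$, i.e., $A$ is minimal.

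For the converse I would carry out a double induction, the outer on $n$ with goal $A\langle n+1\rangle = \bigcup_q A\langle n+1, q\rangle$, and the inner on the Adams degree $r$ of a generator $e \in E^{n+1}\law r \raw$. By the subalgebra property and the outer hypothesis $E^{\leq n} \subset A\langle n+1, 0\rangle$, it suffices to place every such $e$ in some $A\langle n+1, q\rangle$. Minimality gives $de = \sum_i y_i z_i$ with $y_i, z_i \in IA$ of cohomological degrees in $\{1, \dots, n+1\}$ summing to $n+2$, so both factors automatically lie in $A\langle n+1\rangle$. The essential consequence of $2r \geq q$ is the following descent: if $y_i$ contains a generator from $E^{n+1}$, then since $\deg y_i \leq n+1$ the factor $y_i$ must equal that generator alone and $z_i \in A^1 = E^1$; Adams-connectedness (which the hypothesis and $A^0 = \mathbb{Q}$ force) gives $|z_i| \geq 1$, whence $|y_i| \leq r - 1 < r$.

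So every $E^{n+1}$-generator appearing in $de$ has Adams degree strictly less than $|e|$, which makes the inner induction on $r$ well-founded. The base case $r = \lceil (n+1)/2 \rceil$ uses $E^{n+1}\law r-1 \raw = 0$ (from the Adams bound), so $de$ has no $E^{n+1}$-factor, $de \in A\langle n\rangle \subset A\langle n+1, 0\rangle$, and $e \in A\langle n+1, 1\rangle$. For larger $r$ the inner hypothesis supplies depths $q_i$ for the finitely many $E^{n+1}$-generators appearing in $de$; setting $q^* = \max_i q_i$ and using the subalgebra structure places $de \in A\langle n+1, q^*\rangle$, hence $e \in A\langle n+1, q^*+1\rangle$. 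The main obstacle, and the sole place where the assumption $2r \geq q$ plays an indispensable role, is precisely this descent $|y_i| < |e|$: without it the inner induction on $r$ would lack a well-founded base and elements of $E^{n+1}$ might fail to reach any finite level of the filtration.
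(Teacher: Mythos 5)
Your proof is correct and takes essentially the same approach as the paper's: both rely on the degree count in $Sym^*(E^{\leq n+1})^{n+2}$ (forcing at least two factors) for minimality, and on the bound $A^q\lw r\rw=0$ unless $2r\geq q$ forcing the factors of a decomposable $de$ to have strictly smaller Adams degree for the converse. The paper packages the converse as a contradiction on a lexicographically minimal pair of (cohomological degree, Adams degree), while you unwind the same descent into an explicit double induction on $n$ and $r$.
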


\begin{proof} The proof is the same as Proposition 2.3 in Part of IV of \cite{KM}. If $A$ is generalized nilpotent, then $d(A) \subset (IA)^{2}$ is the consequence of the double induction on $n$ and $q$. Assume $A$ is minimal. Suppose that $A$ is not generalized nilpotent and let $n$ be minimal such that there is an element $a$ which does not belong to any $A\langle n, q \rangle$. Assume that $a$ is also the element which has the minimal Adams degree. Consider any summand $bc$ of the decomposable element $da $. One can assume that $0 < deg(b) \leq deg(c)$. Then $bc \in A\langle n-1 \rangle$ or $deg(b) = 1$. Since $A^{q} \law r \raw = 0$ unless $2r \geq q$, $b$ and $c$ have strictly lower Adams grading than $a$. So $b, c$ are in some $A\langle n, q \rangle$. Therefore $da$ is in some $A\langle n, q \rangle$, so is $a$.
\end{proof} 

\begin{df} \label{def of 1-min}
Let $A$ be a cdga over $\G$. Given a positive integer $n$, an $n$-minimal model\footnote{$n$ is also allowed to be $\infty$. In this case, we mean that there is a map of cdgas over $G$: $A\{\infty\} \xrightarrow{s} A$ with $A\{\infty\}$ generalized nilpotent and $s$ a quasi-isomorphism.} of $A$ over $\G$ is a map of cdgas over $\G$:
$$s: A\{n\} \longrightarrow A,$$
 with $A\{n\}$ generalized nilpotent and generated as an algebra in degrees $\leq n$, such that $s$ induces an isomorphism on $H^{m}$ for $1 \leq m \leq n$ and an injection on $H^{n+1}$.
\end{df}
\begin{prop}   
Let $A$ be a cohomologically connected cdga $A$ over $\G$. Then for each $n = 1, 2, \cdots, \infty$, there is an $n$-minimal model $A\{n\}$ over $\G$: $A\{n\} \rightarrow A$.
\end{prop}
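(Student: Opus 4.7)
The plan is to construct $A\{n\}$ by induction on $n$, with an inner induction at each stage iteratively building the filtration $\{A\{n, q\}\}_{q \geq 0}$ witnessing generalized nilpotency. The key tool throughout is the semisimplicity of the category of $\G$-representations, which provides $\G$-equivariant splittings of surjections and hence $\G$-equivariant cochain primitives for cocycles that are exact on the underlying $\mathbb{Q}$-vector spaces.

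For the base case $n = 1$, I would start by choosing, via semisimplicity, a sub-$\G$-representation $V \subset Z^{1}(A)$ mapping isomorphically onto $H^{1}(A)$. Let $A\{1, 1\}$ be the free graded commutative algebra on $V$ placed in degree $1$, with zero differential and the evident map $s_{1}: A\{1, 1\} \to A$. Given $s_{1}: A\{1, q\} \to A$, form $K_{q} := \ker\bigl(H^{2}(A\{1, q\}) \to H^{2}(A)\bigr)$, pick a sub-$\G$-representation $W_{q} \subset Z^{2}(A\{1, q\})$ mapping isomorphically onto $K_{q}$, and choose an equivariant lift $\psi_{q}: W_{q} \to A^{1}$ with $d \circ \psi_{q} = s_{1}|_{W_{q}}$. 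Then adjoin a new copy of $W_{q}$ in degree $1$ whose differential is the inclusion into $A\{1, q\}$; this produces $A\{1, q+1\}$, and $s_{1}$ extends via $\psi_{q}$. Set $A\{1\} := \bigcup_{q} A\{1, q\}$.

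For the inductive step, assuming the $(n-1)$-minimal model $A\{n-1\} \to A$ has been constructed, I would form $A\{n, 0\}$ by tensoring $A\{n-1\}$ with the free graded commutative algebra on a sub-$\G$-representation $U \subset Z^{n}(A)$ mapping isomorphically onto $\mathrm{coker}\bigl(H^{n}(A\{n-1\}) \to H^{n}(A)\bigr)$, with trivial differential. Then iterate exactly as in the base case but now in degrees $n$ and $n+1$: at stage $q$, pick an equivariant representative in $Z^{n+1}(A\{n, q\})$ of the kernel of $H^{n+1}(A\{n, q\}) \to H^{n+1}(A)$, choose an equivariant cochain primitive in $A^{n}$, and adjoin corresponding degree-$n$ generators. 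Set $A\{n\} := \bigcup_{q} A\{n, q\}$.

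The main obstacle is verification rather than construction. One must check that each attachment is consistent with the generalized-nilpotent filtration (automatic, since fresh generators at stage $q+1$ have differentials in $A\{n, q\}$); that the resulting map $s_{n}$ really is an isomorphism on $H^{m}$ for $1 \leq m \leq n$ and injective on $H^{n+1}$ (isomorphisms on $H^{m}$ for $m < n$ are inherited from $A\{n-1\}$; on $H^{n}$, surjectivity comes from the $q = 0$ step while injectivity is preserved since new degree-$n$ generators with trivial differential contribute no coboundaries for lower-degree classes; injectivity on $H^{n+1}$ follows from the killing procedure); and that all choices can be made $\G$-equivariantly by appeal to semisimplicity. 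Finally, for $n = \infty$, take $A\{\infty\} := \varinjlim_{n} A\{n\}$: a class in $H^{m}(A)$ is hit isomorphically already at stage $n = m$ and undisturbed thereafter, so the colimit map is a quasi-isomorphism.
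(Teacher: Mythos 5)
Your construction is correct, but it takes a genuinely different route from the paper's. You run the classical Sullivan/Bousfield--Gugenheim scheme (close in spirit to Kriz--May): an outer induction on the cohomological degree $n$, with an inner ``Hirsch-type'' iteration on $q$ that first adjoins free degree-$n$ generators with zero differential to realize $\mathrm{coker}(H^n(A\{n-1\}) \to H^n(A))$, then repeatedly adjoins degree-$n$ generators killing representatives of $\ker(H^{n+1}(A\{n,q\}) \to H^{n+1}(A))$. The paper instead follows Levine's Proposition 2.4.9 and takes the \emph{Adams degree} as the primary induction variable: it first builds the full Adams-degree-$1$ part $A_{1,n} = \operatorname{Sym}^* E_n(1)$ of the $n$-minimal model through cohomological degrees $1, \dots, n$, then extends successively to Adams degrees $\leq 2, \leq 3, \dots$, and takes the union at the end. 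The paper's bookkeeping exploits the vanishing $H^p(I_{m,n}\langle r\rangle) = 0$ for $r > m$, $p \leq 1$, which keeps the already-built lower-Adams-degree part rigid under later attachments and makes the construction in any fixed Adams degree finitary; this integrates well with the Adams filtration used throughout the rest of the paper (for instance in the comparison of minimal versus generalized nilpotent cdgas). Your version is more elementary and does not track Adams degree explicitly, but it still works here: Adams connectedness forces every generator you adjoin to live in Adams degrees $\geq 1$ (since $A^*\langle 0\rangle = \mathbb{Q}$ is concentrated in cohomological degree $0$), all choices are made equivariantly by semisimplicity of $\mathrm{Rep}(\G)$, and your explicit filtration $\{A\{n,q\}\}_{q}$ sits inside the intrinsic filtration $\{A\langle n, q+1\rangle\}_{q}$, so the generalized-nilpotency condition is satisfied. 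Both approaches handle $n = \infty$ by passing to the colimit.
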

\begin{proof} Follow the idea of Proposition 2.4.9 in \cite{L}. Because $A$ is cohomologically connected, we have a canonical decomposition $A = \mathbb{Q} \oplus IA$. Let $E_{10}(1) \subset I^{1}\law 1 \raw$ be the $\G$-representation $H^{1}(I)\law 1 \raw$, and we need to think it as a sub-module of $I^{1}\law 1 \raw$. We give it cohomological degree $1$ and Adams degree $1$. Then we have a natural inclusion $E_{10}(1) \rightarrow A$, which extends to $Sym^{*}E_{10}(1) \rightarrow A$ using the algebra structure of $A$. In fact, this is a map between cdgas over $\G$ and induces an isomorphism on $H^{1}(-)\law 1 \raw$.
\

Then one can adjoint elements in cohomological degree $1$ and Adams degree $1$ to kill elements in the kernel of the map on $H^{2}(-)\law 1 \raw$. 
So we have a $\mb{Z}$-graded $\G$-representations $E_{1}(1)$, of Adams degree $1$ and cohomological degree $1$, a generalized nilpotent cdga $A_{1,1} = Sym^{*}E_{1}(1)$ over $\G$ and a map of cdgas over $\G$: $A_{1, 1} \rightarrow A,$ which induces an isomorphism on $H^{1}(-)\law 1 \raw$ and an injection on $H^{2}(-)\law 1 \raw$.
\

We also have a canonical decomposition of $A_{1, 1} = \mathbb{Q} \oplus I_{1, 1}$. 
\

Notice that $H^{p}(I_{1, 1}\law r \raw)= 0$ for $r > 1, p \leq 1$. This is because that the lowest degree of cohomology of $I_{1,1}\law r \raw$ is coming from $Sym^{r}E_{1}(1)$ and all the elements of $E_{1}(1)$ have cohomological degree $1$.
Iterating this process, one can construct the Adams degree $\leq 1$ part of the $n$-minimal model in case $n > 1$. This gives us a generalized nilpotent cdga over $\G$,
$$A_{1, n} = Sym^{*}E_{n}(1),$$
with $E_{n}(1)$ in Adams degree $1$ and cohomological degrees $1, 2, \cdots, n$ together with a map over $\G$: $A_{1, n} \rightarrow A$, which induces an isomorphism on $H^{i}(-)\law 1 \raw$ for $1 \leq i \leq n$ and an injection for $i = n + 1$. In addition, letting $A_{1, n} = \mathbb{Q} \oplus I_{1, n}$, we have $H^{p}(I_{1, n}\law r \raw) = 0$ for $r > 1, p \leq 1$.
\

Suppose we have constructed $\mb{Z}$-graded $\G$-representations:
$$E_{n}(1) \subset E_{n}(2) \subset \cdots \subset E_{n}(m)$$
where $E_{n}(j)$ have Adams degrees $1,\cdots,j$ and cohomological degrees $1,\cdots,n$, a differential on $A_{n,m} = Sym^{*}E_{n}(m) $ making $A_{m, n}$ a generalized nilpotent cdga over $\G$, and a map $A_{m, n} \rightarrow A$ of cdgas over $\G$ that is an isomorphism on $H^{i}(-)\law j \raw$ for $1 \leq i \leq n, j \leq m$, and an injection for $i = n + 1, j \leq m$. 
\

If $A_{m, n} = \mathbb{Q} \oplus I_{m, n}$, then $H^{p}(I_{m, n} \law r \raw) = 0$ for $r > m, p \leq 1$. Extending $E_{n}(m)$ to $E_{n}(m + 1)$ by repeating the construction for $E_{n}(1)$ above. Then one can check the above condition sill hold. The induction goes through.
\

Taking $E_{n} = \cup_{m}E_{n}(m)$, we have a differential on $A\{n\} = Sym^{*}E_{n}$ making $A\{n\}$ a generalized nilpotent cdga over $\G$, and a map $A\{n\} \rightarrow A$ of cdgas over $\G$ that is an isomorphism on $H^{i}(-)$ for $1 \leq i \leq n$ and an injection for $i = n + 1$.
\end{proof}
\begin{rmk} \label{min model preserve under qi}
If $f: A \to B$ is a quasi-isomorphism of cdgas over $\G$, and $s: A\{n\} \to A, \ t:B\{n\} \to B$ are n-minimal models, then there is an isomorphism of cdgas over $\G$: $g: A\{n\} \to B\{n\}$ such that $g \circ s$ is homotopic to $t\circ f$.  The proof is the same as the usual case in Chapter 4 of \cite{BG}. 
\end{rmk}

\section{The t-structure of \texorpdfstring{$\mathcal{D}^{G, f}_A$}{mathcal{D}}} \label{t}
The aim of this section is to define a $t$-structure on $\mathcal{D}_{A}^{\G, f}$ for $A$ which is a cohomologically connected cdga over $\G$. We recall the definition of the $t$-structure.
\begin{df} \label{df of t-structure}
A $t$-structure on a triangulated category $\mathcal{D}$ consists of essentially full subcategories $(\mathcal{D}^{\leq 0}, \mathcal{D}^{\geq 0})$ of $\mathcal{D}$ such that:
\begin{itemize}
\item
$\mathcal{D}^{\leq 0}[1] \subset \mathcal{D}^{\leq 0}, \mathcal{D}^{\geq 0}[-1] \subset \mathcal{D}^{\geq 0}$;
\item
$Hom_{\mathcal{D}}(M, N[-1]) = 0$ for $M \in \mathcal{D}^{\leq 0}$ and $N \in \mathcal{D}^{\geq 0}$;
\item
For every $M \in \mathcal{D}$, there is a distinguished triangle
$$M^{\leq 0} \rightarrow M \rightarrow M^{> 0} \rightarrow M^{\leq 0}[1]$$
with $M^{\leq 0} \in \mathcal{D}^{\leq 0}$ and $M^{> 0} \in \mathcal{D}^{\geq 0}[-1]$.
\end{itemize}
Write $\mathcal{D}^{\leq n}$ for $\mathcal{D}^{\leq 0}[-n]$ and $\mathcal{D}^{\geq n}$ for $\mathcal{D}^{\geq 0}[-n]$.
\

A $t$-structure $(\mathcal{D}^{\leq 0}, \mathcal{D}^{\geq 0})$ is non-degenerate if 
$$A \in \bigcap_{n \leq 0}\mathcal{D}^{\leq n} \ \ \ \ \ \text{and} \ \ \ \ \  B \in \bigcap_{n \geq 0}\mathcal{D}^{\geq n} \ \ \ \ \ \text{imply} \ \ \ \ \  A \cong B \cong 0.$$
\end{df}

There is a canonical augmentation $\epsilon: A \rightarrow \mathbb{Q}$, given by the projection onto $A^{0}\law 0 \raw = \mathbb{Q}$. So we have a functor:
$$q = \epsilon_{*}: \mathcal{CM}_{A}^{\G} \rightarrow \mathcal{M}_{\mathbb{Q}}^{\G}, \ \ \ \ \ \ \ q(M) = M \otimes_{A} \mathbb{Q}$$
and an exact tensor functor:
$$q: \mathcal{D}_{A}^{\G} \rightarrow \mathcal{D}_{\mathbb{Q}}^{\G}.$$
\begin{rmk} \label{conservative}
We recall that $\mathcal{D}_{\mathbb{Q}}^{\G, f}$ is the derived category of finite dimensional $\G$-representations. There is a canonical $t$-structure for $\mathcal{D}^{\G, f}_{\mathbb{Q}}$. We want to use $q$ to get the induced $t$-structure for $\mathcal{D}_{A}^{\G, f}$ when $A$ is a cohomologically connected cdga over $\G$. This is reasonable because the following general fact:
\

Let $\phi: A \rightarrow B$ be a map of cohomologically connected cdgas over $\G$. Then $\phi_{*}: \mathcal{D}_{A}^{\G, f} \rightarrow \mathcal{D}_{B}^{\G, f}$ is conservative, i.e., $\phi_{*}(M) \cong 0$ implies $M \cong 0$.
\begin{proof}
Take a non-zero object $M \in \mathcal{D}_{A}^{\G, f}$. Then we can find a cell module $P$ and a quasi-isomorphism $P \rightarrow M$ such that $W_{n-1}P = 0$, but $W_{n}P$ is not acyclic. We choose generating data $\{V_{j}, \phi_{j}\}_{j \in J}$ for $P$, such that $|V_{j}| \geq n$ for $j \in J$. Because $n$ is the minimal integer of the possible Adams degree, (like the proof of Lemma \ref{closed under summand}), we can know that $W_{n}P \otimes_{A} \mathbb{Q}$ is not acyclic. Notice that $W_{n}(P \otimes_{A} B) = W_{n}P \otimes_{A} B$ and $W_{n}P \otimes_{A} \mathbb{Q} = (W_{n}P \otimes_{A} B) \otimes_{B} \mathbb{Q}$. Therefore $P \otimes_{A} B$ is not isomorphic to zero in $\mathcal{KCM}_{B}^{\G}$ and $\phi_{*}M$ is non-zero in $\mathcal{D}^{\G, f}_{B}$.
\end{proof} 
\end{rmk}
\begin{rmk}
The inclusion $\mathbb{Q} \rightarrow A$ splits $\epsilon$. Then the functor $q$ defined above can be identified with the functor $gr^{W}_{*} = \prod_{n \in \mathbb{Z}}gr^{W}_{n}$. One can prove it by using the decomposition of the differential $d = d^{0} + d^{+}$, which is described in Lemma \ref{app} below and comparing two functors directly. 
\end{rmk}

Define full subcategories $\mathcal{D}_{A}^{\G, f, \leq 0}, \mathcal{D}_{A}^{\G, f, \geq 0}$ and $\mathcal{H}_{A}^{\G, f}$ of $\mathcal{D}_{A}^{\G, f}$.
$$\mathcal{D}_{A}^{\G, f, \leq 0} = \{M \in \mathcal{D}_{A}^{\G, f} | H^{n}(qM) = 0 \ for \ n > 0\}$$
$$\mathcal{D}_{A}^{\G, f, \geq 0} = \{M \in \mathcal{D}_{A}^{\G, f} | H^{n}(qM) = 0 \ for \ n < 0\}$$
$$\mathcal{H}_{A}^{\G, f} \ \ \ = \{M \in \mathcal{D}_{A}^{\G, f} | H^{n}(qM) = 0 \ for \ n \neq 0\}.$$
Then we have the following theorem as in \cite{KM, L}.
\begin{thm} \label{existence of t-structure}
Suppose $A$ is cohomologically connected. Then $$(\mathcal{D}_{A}^{\G, f, \leq 0}, \mathcal{D}_{A}^{\G, f, \geq 0})$$ is a non-degenerate $t$-structure on $\mathcal{D}_{A}^{\G, f}$ with heart $\mathcal{H}_{A}^{\G, f}$.
\end{thm}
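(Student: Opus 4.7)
The plan is to reduce the $t$-structure axioms on $\mathcal{D}_{A}^{\G, f}$ to those of the canonical $t$-structure on $\mathcal{D}_{\mathbb{Q}}^{\G, f}$ via the conservative augmentation functor $q = \epsilon_*$ of Remark \ref{conservative}, using \emph{minimal cell models} as the bridge. First I would establish that every $M \in \mathcal{D}_A^{\G, f}$ admits a quasi-isomorphism $P \to M$ from a finite cell module $P$ whose differential is \emph{minimal}: under the splitting $A = \mathbb{Q} \oplus IA$ induced by $\epsilon$, decompose $d = d^0 + d^+$ where $d^0$ is the component landing in $\mathbb{Q} \cdot P$ and $d^+$ in $IA \cdot P$, and require $d^0 = 0$ on the generating data $\{\phi_j(V_j)\}_{j \in J}$. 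This is built by induction along the sequential filtration, absorbing the unwanted $d^0$-contributions via a change of basis supplied by semisimplicity of $\G$-representations, with cohomological connectedness of $A$ ensuring that the procedure terminates.

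For such a minimal $P$, the functor $q = -\otimes_A \mathbb{Q}$ sends $P$ to $\bigoplus_j V_j[-n_j]$ with zero differential (every $d^+$-component lands in $IA \cdot P$ and is killed by $\epsilon$), so $H^n(qP) = \bigoplus_{n_j = n} V_j$; membership in the candidate aisle and coaisle can thus be read directly off the cohomological degrees of generators. I would then define truncations by
\[
P^{\leq 0} = \bigoplus_{n_j \leq 0} \phi_j(A \otimes V_j), \qquad P^{> 0} = P/P^{\leq 0},
\]
noting that minimality of the differential guarantees $P^{\leq 0}$ is stable under $d$ and that both pieces are finite (still minimal) cell modules, producing a distinguished triangle
\[
P^{\leq 0} \longrightarrow P \longrightarrow P^{> 0} \longrightarrow P^{\leq 0}[1]
\]
with $P^{\leq 0} \in \mathcal{D}_{A}^{\G, f, \leq 0}$ and $P^{> 0} \in \mathcal{D}_{A}^{\G, f, \geq 1}$.

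For the Hom-vanishing $\mathrm{Hom}(M, N[-1]) = 0$ with $M \in \mathcal{D}^{\G, f, \leq 0}$, $N \in \mathcal{D}^{\G, f, \geq 0}$, I would replace both by minimal cell models and use the truncations above to arrange that the generators of $P_M$ sit in degrees $\leq 0$ and those of $P_N$ in degrees $\geq 0$. Iterating along the sequential filtration of $P_M$ and applying the adjunction
\[
\mathrm{Hom}_{\mathcal{D}_A^{\G}}(A[-n] \otimes V, Q) \;\cong\; \mathrm{Hom}_{\G}\bigl(V, H^n(Q)\bigr)
\]
reduces the statement to $H^{n_j - 1}(P_N) = 0$ for each $n_j \leq 0$, which holds because a minimal cell module with generators in degrees $\geq 0$ is concentrated in nonnegative cohomological degrees as a graded object. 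Non-degeneracy is then immediate from conservativity of $q$: if $M$ lies in all $\mathcal{D}_{A}^{\G, f, \leq -n}$ for $n \geq 0$ then $qM$ has vanishing cohomology, so $qM \cong 0$ and hence $M \cong 0$, and symmetrically for the other half.

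The main obstacle is the minimal cell model theorem itself: executing the inductive change of basis demands careful use of the sequential filtration combined with the Whitehead theorem (Theorem \ref{Whitehead}) to replace each stage by a quasi-isomorphic cell module with the spurious $d^0$-component eliminated, and this is precisely the step that essentially uses both semisimplicity of $\G$-representations and $A^0\law 0 \raw = \mathbb{Q}$. Once minimality is available, the three $t$-structure axioms and non-degeneracy follow as formal bookkeeping on the generators.
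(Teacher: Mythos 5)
Your overall strategy mirrors the paper's: reduce the $t$-structure axioms to bookkeeping on generators of degree-controlled cell models, use the decomposition $d = d^0 + d^+$ induced by $A = \mathbb{Q} \oplus IA$, and get non-degeneracy from conservativity of $q$. The packaging differs slightly: the paper's Lemma \ref{app} does not fully minimize the cell module, but only performs a change of basis among the degree-$0$ generators so that $S_0 = \ker d^0$ splits off (and treats the $\geq 0$ case asymmetrically, re-running the cell approximation with degree constraints), while you propose passing all the way to a minimal cell model where $d^0 = 0$ on all generators, which then makes both truncations and the Hom-vanishing fall out symmetrically. That is a legitimate and arguably cleaner route, and the existence of minimal cell models for Adams connected, Adams-degree-bounded-below $A$ is exactly what the paper later establishes in Section \ref{heart}, so there is no circularity.

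There is, however, one genuine gap. You assert that ``minimality of the differential guarantees $P^{\leq 0}$ is stable under $d$'' and attribute termination/validity to cohomological connectedness of $A$. But minimality only gives $d(\phi_j(V_j)) \subset IA \cdot P$; for this to hit only generators in cohomological degree $\leq n_j$, you need $IA$ (equivalently, $A$) to be concentrated in cohomological degrees $\geq 1$ (resp.\ $\geq 0$ with $A^0 = \mathbb{Q}$). Cohomological connectedness alone allows $A$, and hence $IA$, to have nonzero components in negative cohomological degrees, in which case $d^+$ can map a degree-$n_j$ generator to an $A$-multiple of a generator in degree $> n_j$, and $P^{\leq 0}$ is not a subcomplex. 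The same issue sinks the Hom-vanishing step, where you need ``a minimal cell module with generators in degrees $\geq 0$ is concentrated in nonnegative cohomological degrees,'' which again requires $A$ concentrated in nonnegative degrees. The paper handles this by a single line at the start of the proof: replace $A$ by its (generalized nilpotent, hence connected) minimal model, which gives an equivalence $\mathcal{D}^{\G,f}_{A\{\infty\}} \simeq \mathcal{D}^{\G,f}_A$ via Proposition \ref{preserve under qi} and Theorem \ref{criterion for rigid}. You need to insert that reduction explicitly; with it, your argument goes through.
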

\noindent $Proof.$ We can assume that $A$ is connected after replacing by its minimal model. The proof will divide into the following lemmas.
\begin{lem} \label{app}
Suppose that $A$ is connected. Let $M \in \mathcal{D}_{A}^{\G, f, \leq 0}$ (resp. $M \in \mathcal{D}_{A}^{\G, f, \geq 0}$). Then there is a cell $A$-module $P \in \mathcal{CM}^{\G, f}_{A}$ with generating data $\{V_{j}, \phi_{j}\}_{j \in J}$ such that $deg(\phi_{j}) \leq 0$ for all $j \in J$(resp. $deg(\phi_{j}) \geq 0$ for all $j \in J$), and a quasi-isomorphism $P \rightarrow M$.
\end{lem}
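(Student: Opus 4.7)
The plan is to build $P$ by induction along the weight filtration of Section~\ref{weight}, modifying the cell approximation construction of Theorem~\ref{App cell} so that all cells have cohomological degree $\leq 0$; the $\geq 0$ case is symmetric. The key input is that by the remark identifying $q$ with $\bigoplus_r gr_r^W$, the hypothesis $H^n(qM)=0$ for $n>0$ is equivalent to $H^n(gr_r^W M)=0$ for all $n>0$ and all $r$, which controls the cohomological degrees of the cells needed at each weight.

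Since $A$ is connected (and may be taken Adams connected, as the minimal model used for the reduction in the proof of Theorem~\ref{existence of t-structure} has both properties), the weight filtration is available and $M$ admits a finite cell representative supported in finitely many weights $n_0 \leq n \leq n_1$. I would construct a chain of sub cell modules $P^{(n_0)} \subset \cdots \subset P^{(n_1)} = P$ with compatible maps $f_n\colon P^{(n)} \to M$ such that $f_n\colon P^{(n)} \to W_n M$ is a quasi-isomorphism and all generating data of $P^{(n)}$ sits in cohomological degrees $\leq 0$. For the base case, connectedness of $A$ makes $W_{n_0} M = gr_{n_0}^W M$ isomorphic to $A \otimes V^{(n_0)}$ for a complex of $\G$-representations $V^{(n_0)}$ with $H^*(V^{(n_0)}) = H^*(qM)\langle n_0 \rangle$ concentrated in degrees $\leq 0$; by semisimplicity of $\G$-representations, I would take $P^{(n_0)} := A \otimes H^*(qM)\langle n_0 \rangle$ with zero differential and a natural quasi-isomorphism $P^{(n_0)} \to W_{n_0} M \hookrightarrow M$.

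For the inductive step, for each irreducible $\G$-subrepresentation $V \subset H^k(gr_{n+1}^W M)$---necessarily with $k \leq 0$ by hypothesis---I would choose a cocycle $\tilde v \in (W_{n+1} M)^k$ of weight $n+1$ representing $V$; then $d\tilde v \in W_n M$ is a cycle whose cohomology class, via the quasi-isomorphism $f_n$, lifts to a class in $H^{k+1}(P^{(n)})$ represented by some cocycle $\tilde\alpha$, and after correcting $\tilde v$ by an element of $W_n M$ one can arrange $d\tilde v = f_n(\tilde\alpha)$. Attach a cell $A[-k] \otimes V$ of weight $n+1$ with boundary $\tilde\alpha$, and extend $f_n$ by sending the generator of $V$ to $\tilde v$. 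Doing this for all irreducible summands of $H^*(gr_{n+1}^W M)$ produces $P^{(n+1)}$ whose new cells all live in degrees $k \leq 0$; the five lemma applied to the long exact sequences of the pairs $(P^{(n)}, P^{(n+1)})$ and $(W_n M, W_{n+1} M)$ then shows that $f_{n+1}\colon P^{(n+1)} \to W_{n+1} M$ is a quasi-isomorphism. After finitely many steps, $P := P^{(n_1)}$ is a finite cell module and $f_{n_1}\colon P \to M$ is the desired quasi-isomorphism.

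The main obstacle is the inductive step: one must choose attaching maps so that the induced differential on $P^{(n+1)}$ squares to zero, the extended map $f_{n+1}$ remains well defined on the new cells, and all new generators stay in degrees $\leq 0$. All three points rest on the obstruction computation above, whose crucial input is the hypothesis $H^n(qM) = 0$ for $n > 0$ applied weight by weight, forcing the cohomology of $gr_{n+1}^W M$---which governs the cohomological degrees of the cells one must add---to lie in $\leq 0$.
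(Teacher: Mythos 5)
Your proof is correct, but it takes a genuinely different route from the paper's. For the $\leq 0$ case, the paper starts from an \emph{arbitrary} finite cell representative $Q \to M$, decomposes its differential as $d_Q = d_Q^0 + d_Q^+$, rechooses generating data so that the kernel of $d^0$ in cohomological degree $0$ is spanned by basis cells, and then truncates: the sub $A$-module $\tau^{\leq 0}Q$ generated by cells of degree $< 0$ together with these degree-$0$ kernel cells is shown to be a subcomplex (using connectedness of $A$ and the $d^0/d^+$ decomposition), and the inclusion $\tau^{\leq 0}Q \hookrightarrow Q$ is a quasi-isomorphism by conservativity of $q$. For the $\geq 0$ case the paper explicitly falls back on a modified version of the cell approximation Theorem~\ref{App cell}, citing Lemma~1.6.2 in \cite{L}. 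You instead build $P$ from scratch by induction on the weight filtration, attaching only cells of degree $\leq 0$ at each weight. The two approaches buy different things: yours is symmetric and handles both signs by one argument, and makes transparent exactly where the hypothesis $H^{>0}(qM)=0$ enters (it bounds the degrees of the cells to attach at each weight); the paper's truncation is shorter for the $\leq 0$ case since it reuses an arbitrary cell representative rather than re-running an approximation argument.

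One point to tighten: where you write ``$V \subset H^k(gr_{n+1}^W M)$, necessarily with $k \leq 0$ by hypothesis,'' and later ``the cohomology of $gr_{n+1}^W M$ \ldots lies in degrees $\leq 0$,'' you should be referring to $H^k\bigl(q(gr_{n+1}^W M)\bigr)$, i.e.\ the cohomology of the underlying complex $V_{n+1}$ of $\G$-representations with $gr_{n+1}^W M \cong A \otimes V_{n+1}$, which is the weight-$(n+1)$ piece of $H^k(qM)$. As an $A$-module, $gr_{n+1}^W M$ can have nonzero $H^k$ for $k>0$ coming from $H^{>0}(A) \otimes V_{n+1}^{\leq 0}$, and one must not attach cells for those classes; the cells to attach at weight $n+1$ are indexed only by the irreducible summands of $H^k(V_{n+1})$. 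With this adjustment your inductive step (choose a representative $\tilde v$ of such a class, lift $d\tilde v$ through the quasi-isomorphism $f_n$, correct $\tilde v$, attach, and apply the five lemma to the weight short exact sequences) goes through exactly as you describe.
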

\begin{proof} We prove the case $M \in \mathcal{D}_{A}^{\G, f, \leq 0}$ only.
\

We choose a quasi-isomorphism $Q \rightarrow M$ with $Q \in \mathcal{CM}^{\G, f}_{A}$. Let $\{V_{j}, \phi_{j}\}_{j \in J}$ be generating data for $Q$. We can decompose $d_{Q}$ with two parts $d_{Q}^{0}$ and $d_{Q}^{+}$, where $d_{Q}^{0}$ maps $\phi_{j}(V_{j})$ to the submodule whose generating data $(\phi_{i}, V_{i})$ have the Adams degree $|V_{j}|$  and $d_{Q}^{+}$ map to the complement part. After choosing suitable generating data, we may assume the collection $S_{0}$ of $(V_{j}, \phi_{j})$ with $deg(\phi_{j}) = 0$ and $d_{Q}^{0}(\phi_{j}(V_{j})) = 0$ forms a basis of
$$ker(d^{0}: \oplus_{deg(\phi_{j}) = 0}\phi_{j}(V_{j}) \rightarrow \oplus_{deg(\phi_{i}) = 1}\phi_{i}(V_{i})).$$
Let $\tau^{\leq 0}Q$ be the sub $A$-module of $Q$ with the generating data of $S = \{(V_{j}, \phi_{j}) | deg(\phi_j) < 0 \} \bigcup S_{0}$.
\

Claim: $\tau^{\leq 0}Q$ is a subcomplex of $Q$. Consider
$$d_{Q}(\phi_{\alpha}(V_{\alpha})) = d_{Q}^{0}(\phi_{\alpha}(V_{\alpha})) \oplus d_{Q}^{+}(\phi_{\alpha}(V_{\alpha})).$$
Using the connected condition of $A$, we can know that:
\

1. If $\phi_{\beta}(V_{\beta}) \subset d_{Q}^{+}(\phi_{\alpha}(V_{\alpha}))$, then $deg(\phi_{\beta}) \leq deg(\phi_{\alpha})$.
\

\noindent Or 2. If $\phi_{\beta}(V_{\beta}) \subset d_{Q}^{0}(\phi_{\alpha}(V_{\alpha}))$, then $deg(\phi_{\beta}) = deg(\phi_{\alpha}) + 1$.
\

Consider $(V_{\alpha}, \phi_{\alpha}) \in S$ with $deg(\phi_{\alpha}) \leq -1$. Because $(d_{Q}^{0})^{2} = 0$, every summand of $d_Q^{0}(\phi_{\alpha}(V_{\alpha}))$ lies in $S_0$.  We only need to consider elements in $S_{0}$. Let $(V_{\alpha}, \phi_{\alpha}) \in S_{0}$. Then we have:
$$d_{Q}(\phi_{\alpha}(V_{\alpha})) \subset \bigoplus_{deg(\phi_{\beta}) = 0}\phi_{\beta}(A \otimes V_{\beta}) \oplus \bigoplus_{deg(\phi_{\gamma}) \leq -1}\phi_{\gamma}(A \otimes V_{\gamma}).$$
Using $d^{2}_{Q}(\phi_{\alpha}(V_{\alpha})) = 0$, we can know that $d^{0}(\phi_{\beta}(V_{\beta})) = 0$ for $deg(\phi_{\beta}) = 0$. This means that $d_{Q}(\phi_{\alpha}(V_{\alpha})) \subset \tau^{\leq 0}Q$. 
\

Next we can see that $\tau^{\leq 0}Q \rightarrow Q$ is a quasi-isomorphism. Using Remark \ref{conservative} , we need only to check that $q\tau^{\leq 0} Q \rightarrow qQ$ is a quasi-isomorphism. This is clear because of $qQ \cong qM$ and $M \in \mathcal{D}_{A}^{f, \leq 0}$.
\

For the case $M \in \mathcal{D}_{A}^{\G, f, \geq 0}$, we need to check the proof of Theorem \ref{App cell} carefully, where we can add the extra conditions on the degrees of the generating datum. See Lemma 1.6.2 in \cite{L}.
\end{proof}
\begin{lem}
Suppose that $A$ is connected. Then $Hom_{\mathcal{D}_{A}^{\G, f}}(M, N[-1]) = 0$ for $M \in \mathcal{D}_{A}^{\G, f, \leq 0}$ and $N \in \mathcal{D}_{A}^{\G, f, \geq 0}$.
\end{lem}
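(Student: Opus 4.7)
The strategy is to combine Lemma \ref{app} (which provides cell models concentrated in the right cohomological degrees) with the connectedness hypothesis on $A$ to force every $A$-linear chain map $P \to R[-1]$ to vanish on the nose, not merely up to homotopy.

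First, I would invoke Lemma \ref{app} twice: once to choose a cell model representative $P \in \mathcal{CM}^{\G,f}_A$ of $M$ with generating data $\{V_j, \phi_j\}_{j \in J}$ satisfying $n_j := \deg(\phi_j) \leq 0$, and once to choose a cell model $R$ of $N$ with generating data $\{U_i, \psi_i\}_{i \in I}$ satisfying $e_i := \deg(\psi_i) \geq 0$. Using Remark \ref{Hom_K = Hom_D}, the computation of $\mathrm{Hom}_{\mathcal{D}_{A}^{\G,f}}(M, N[-1])$ reduces to the computation of $\mathrm{Hom}_{\mathcal{K}_{A}^{\G}}(P, R[-1])$.

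Next, I would exploit connectedness of $A$, which by Definition \ref{def of cdga over GL_2} means $A^m = 0$ for $m < 0$. As a graded $A$-module (ignoring the differential), $R \cong \bigoplus_i A[-e_i] \otimes U_i$, and its cohomological degree $m$ part is $\bigoplus_i A^{m - e_i} \otimes U_i$. Since $e_i \geq 0$, this summand vanishes whenever $m < 0$; hence $R^m = 0$ for all $m < 0$. Now any degree-$0$, $A$-linear map $f \colon P \to R[-1]$ is determined by its restriction to the generators $\phi_j(V_j)$, and the image of $\phi_j(V_j)$ must sit in cohomological degree $n_j$ of $R[-1]$, namely in $R^{n_j - 1}$. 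Since $n_j \leq 0$, we have $n_j - 1 < 0$, so $R^{n_j - 1} = 0$, forcing $f = 0$ as a graded $A$-module map. In particular its homotopy class vanishes, so $\mathrm{Hom}_{\mathcal{K}_{A}^{\G}}(P, R[-1]) = 0$ and the claim follows.

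The main obstacle, such as it is, is keeping the degree conventions straight when interpreting $\deg(\phi_j)$, the shift $[-1]$, and the cohomological grading on $R$ simultaneously; once that bookkeeping is done the argument collapses to a one-line degree count. The genuine content has been absorbed into Lemma \ref{app}, which is precisely why establishing cell model representatives with controlled generator degrees was the crucial preparatory step.
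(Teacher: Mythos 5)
Your proof is correct and follows essentially the same route as the paper's: replace $M$ and $N$ by the cell models supplied by Lemma \ref{app} with generator degrees $\leq 0$ and $\geq 0$ respectively, then observe that connectedness of $A$ forces the target $R[-1]$ to vanish in exactly the cohomological degrees where the generators of $P$ live, so any degree-zero $A$-linear map is zero on generators and hence zero. The paper phrases the same degree count by noting that $N[-1]$ has generators in degrees $\geq 1$ and appealing to "computing the degrees of both sides"; your version makes the bookkeeping explicit, which is a fair expansion of the same argument.
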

\begin{proof} By Lemma \ref{app}, we may assume that $M$ and $N[-1]$ are cell $A$-modules with the generating datum $\{(V_{\alpha}, \phi_{\alpha})\}_{deg(\phi_{\alpha}) \leq 0}$ and $\{(V_{\beta}, \phi_{\beta})\}_{deg(f_{\beta}) \geq 1}$. Recall we have:
$$Hom_{\mathcal{D}^{\G, f}_{A}}(M, N[-1]) = Hom_{\mathcal{KCM}^{\G, f}_{A}}(M, N[-1]).$$
If $\phi: M \rightarrow N[-1]$, then $deg(\phi) = 0$, which is impossible when we compute the degrees of both sides.
\end{proof}

\begin{lem}
Suppose that $A$ is connected. For $M \in \mathcal{D}_{A}^{\G, f}$, there is a distinguished triangle
$$M^{\leq 0} \rightarrow M \rightarrow M^{> 0} \rightarrow M^{\leq 0}[1]$$
with $M^{\leq 0} \in \mathcal{D}^{\G, f, \leq 0}$ and $M^{> 0} \in \mathcal{D}^{\G, f, \geq 0}[-1]$.
\end{lem}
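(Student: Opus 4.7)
The plan is to construct the distinguished triangle explicitly, starting from a finite cell model of $M$ and performing a weight-sensitive version of the classical ``stupid truncation'' of a complex. The whole construction is essentially the mirror image of the argument already used in Lemma \ref{app}, but now applied to an arbitrary $M$ rather than one known a priori to lie in $\mathcal{D}_{A}^{\G, f, \leq 0}$.

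First I would reduce to the case where $A$ is connected. Since $A$ is cohomologically connected, Remark \ref{min model preserve under qi} and Proposition \ref{preserve under qi} let us replace $A$ by its $\infty$-minimal model without changing the derived category, and this replacement is visibly compatible with $q = \epsilon_{*}$. So assume $A$ is connected, and use Theorem \ref{App cell} to represent $M$ by a finite cell module $P \in \mathcal{CM}_{A}^{\G, f}$ with generating data $\{(V_j, \phi_j)\}_{j \in J}$.

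The key construction is exactly the submodule $\tau^{\leq 0} P$ of Lemma \ref{app}. Decompose $d_{P} = d_{P}^{0} + d_{P}^{+}$, where $d_{P}^{0}$ preserves Adams degree and $d_{P}^{+}$ strictly raises it, then choose the generating data so that
$$S_{0} = \{(V_j, \phi_j) : \deg \phi_j = 0, \ d_{P}^{0}(\phi_j(V_j)) = 0\}$$
is a basis of the kernel of $d^{0}$ restricted to the degree-zero generators, and set
$$S = \{(V_j, \phi_j) : \deg \phi_j < 0\} \cup S_{0}, \qquad \tau^{\leq 0} P := \text{sub } A\text{-module generated by } S.$$
That $\tau^{\leq 0} P$ is a genuine subcomplex is verified exactly as in Lemma \ref{app}: connectedness of $A$ forces all terms of $d^{+}\phi_{\alpha}(V_{\alpha})$ to have Adams degree $\leq |V_{\alpha}|$, while $(d^{0})^{2} = 0$ forces the degree-zero contributions of $d^{0}\phi_{\alpha}(V_{\alpha})$ to lie in $S_{0}$.

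Define $M^{\leq 0} := \tau^{\leq 0} P$ and $M^{>0} := P / \tau^{\leq 0} P$; the short exact sequence of dg $A$-modules gives the desired distinguished triangle, and both pieces are finite cell modules since their generating data are subsets of the finite set $J$. To see the $t$-structure conditions, apply $q$. Since $A$ is connected, $qP$ is canonically the complex of $\G$-representations whose degree $n$ part is $\bigoplus_{\deg \phi_j = n} V_j$, with differential induced by $d^{0}$. Under this identification, $q(\tau^{\leq 0} P)$ is precisely the classical truncation $\tau^{\leq 0}(qP)$, so $H^{n}(q(\tau^{\leq 0} P)) = 0$ for $n > 0$, placing $M^{\leq 0}$ in $\mathcal{D}_{A}^{\G, f, \leq 0}$. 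Likewise $q(P/\tau^{\leq 0} P)$ equals the brutal truncation $\tau^{>0}(qP)$, whose degree-zero part is the complement of $\ker(d^{0})$, injecting into degree one under $d^{0}$; hence $H^{n}$ vanishes for $n \leq 0$, placing $M^{>0}$ in $\mathcal{D}_{A}^{\G, f, \geq 1} = \mathcal{D}_{A}^{\G, f, \geq 0}[-1]$.

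The one slightly delicate step is the very construction of the generating data splitting in $S_{0}$ — choosing a basis of $\ker(d^{0})|_{\deg = 0}$ and extending it to a full generating set — but this is where the semisimplicity of the category of $\G$-representations and the argument of Lemma \ref{app} already do all the work, so no new idea is needed beyond organizing the truncation and reading off the cohomological bounds after applying $q$.
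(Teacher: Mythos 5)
Your proof is correct and takes essentially the same approach as the paper: both construct $\tau^{\leq 0}P$ as the sub cell $A$-module generated by the negative-degree generators together with a chosen basis $S_0$ of $\ker(d^0)$ in degree $0$, exactly as in Lemma \ref{app}, and then read off the $t$-structure bounds after applying $q$. The only cosmetic differences are that the paper takes $M^{>0}$ to be a cone rather than the quotient $P/\tau^{\leq 0}P$ (equivalent via the short exact sequence of dg $A$-modules), and your opening reduction to connected $A$ is redundant since the statement already assumes it.
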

\begin{proof} Following the same proof of Lemma \ref{app}, we can get a sub cell $A$-module $\tau^{\leq 0}M$ of $M$ such that:
\begin{itemize}
\item
 $\tau^{\leq 0}M$ have generating data $\{(V_{\alpha}, \phi_{\alpha})\}_{deg(\phi_{\alpha}) \leq 0}$.
\item
The map $q \tau^{\leq 0}M \rightarrow q M$ gives an isomorphism on $H^{n}$ for $n \leq 0$.
\end{itemize}

Let $M^{\leq 0} = \tau^{\leq 0}M$ and let $M^{> 0}$ be the cone of $\tau^{\leq 0}M \rightarrow M$. This gives us the distinguished triangle in $\mathcal{D}_{A}^{\G}$:
$$M^{\leq 0} \rightarrow M \rightarrow M^{> 0} \rightarrow M^{\leq 0}[1].$$
Because $M \in \mathcal{D}_{A}^{\G, f}$, then $gr^{W}_{n}M \in D^{\G, f}_{\mathbb{Q}}$ for all $n$ and is isomorphic to zero for all but finitely many $n$. This means $M^{\leq 0}$ and $M^{> 0}$ all satisfy these two conditions by using the long exact sequence. Recall the distinguished triangle given by the weight filtration $gr_{n}^{W}M \rightarrow M \rightarrow M^{> n} \rightarrow gr_{n}^{W}M[1]$. By induction, we can show that $M^{\leq 0}$ and $M^{> 0}$ are all in $\mathcal{D}^{\G, f}_{A}$. After applying the functor $q$, we can know that $M^{\leq 0} \in \mathcal{D}^{\G, f, \leq 0}$ and $M^{> 0} \in \mathcal{D}^{\G, f, \geq 0}[-1]$.
\

The only thing need to check is non-degenerate for the $t$-structure. If we take $M \in \bigcap_{n \leq 0}\mathcal{D}^{\leq n}$, then $H^{n}(qM) = 0$ for all $n$, i.e., $qM \cong 0$ in $\mathcal{D}^{\G, f}_{\mathbb{Q}}$. By the conservative property of the functor $q$, we know that $A \cong 0$ in $\mathcal{D}^{\G, f}_{A}$. Another case is similar.
\end{proof}

\begin{rmk}
The key point for the proof is that, for $M \in \mathcal{D}^{\G, f}_{A}$, we have a lowest bound for the Adams degree.
\end{rmk}
We recall the definition of the neutral Tannakian category.
\begin{df} \label{def of Tan}
A neutral Tannkian category over $\mb{Q}$ is a rigid abelian tensor category $(C, \otimes)$ such that $\mb{Q} = End(1)$ for which there exists an exact faithful $k$-linear tensor functor $\omega: C \rightarrow Vec_{\mb{Q}}$. Here $1$ means the identity object under tensor product. The functor $\omega$ is called the fiber functor.
\end{df}
\begin{rmk}
As shown in Theorem 2.11 of \cite{DM}, every neutral Tannakian category is equivalent to the category of finite-dimensional representations of an affine group scheme. This affine group scheme is called the Tannakian fundamental group of $(C, \omega)$.
\end{rmk}
\begin{prop}
$\mathcal{H}_{A}^{\G, f}$ is a neutral Tannakian category over $\mathbb{Q}$.
\end{prop}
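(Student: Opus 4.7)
The plan is to verify the four conditions in Definition \ref{def of Tan}: abelianness, a rigid tensor structure, $\mathrm{End}(1) = \mathbb{Q}$, and the existence of an exact faithful $\mathbb{Q}$-linear tensor fibre functor. Abelianness is automatic from Theorem \ref{existence of t-structure} by standard $t$-structure theory. To show that $\otimes^{\mathbb{L}}_A$ descends to the heart, I would use that $q: \mathcal{D}_A^{\G,f} \to \mathcal{D}_{\mathbb{Q}}^{\G,f}$ is an exact tensor functor, so $q(M \otimes^{\mathbb{L}}_A N) \cong qM \otimes^{\mathbb{L}}_{\mathbb{Q}} qN$. Since $\mathbb{Q}$ is a field, the latter is just the ordinary tensor product of complexes of $\G$-representations, and the Künneth formula gives $H^n(q(M \otimes^{\mathbb{L}}_A N)) \cong \bigoplus_{i+j=n} H^i(qM) \otimes H^j(qN)$, which vanishes for $n \neq 0$ when $M, N \in \mathcal{H}_A^{\G,f}$. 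Hence $\mathcal{H}_A^{\G,f}$ is closed under the derived tensor product.

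For rigidity, Theorem \ref{criterion for rigid} already shows that every $M \in \mathcal{D}_A^{\G,f}$ is rigid in the sense of Definition \ref{rigid}, with dual $M^{\vee} = R\mathcal{H}om_A(M, A)$ and evaluation/coevaluation living in $\mathcal{D}_A^{\G,f}$. The only remaining check is that $M^{\vee} \in \mathcal{H}_A^{\G,f}$ whenever $M$ is; this follows from the tensor compatibility $q(M^{\vee}) \cong (qM)^{\vee}$ together with the fact that the dual of a finite dimensional $\G$-representation placed in degree $0$ is again such. For the endomorphism condition, Remark \ref{Hom_K = Hom_D}, the natural identification $Hom_A(A, A) \cong A$, and Definition \ref{def-hom} give $\mathrm{End}_{\mathcal{H}_A^{\G,f}}(A) \cong \mathrm{Hom}_{\G}(\mathbb{Q}, H^0(A))$, which equals $\mathbb{Q}$ by cohomological connectedness.

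For the fibre functor, I take $\omega$ to be the composition of $q|_{\mathcal{H}}: \mathcal{H}_A^{\G,f} \to \mathcal{H}_{\mathbb{Q}}^{\G,f}$ with the forgetful functor $\mathcal{H}_{\mathbb{Q}}^{\G,f} \simeq \mathrm{Rep}^{fd}_{\G} \to \mathrm{Vec}_{\mathbb{Q}}$. The restriction $q|_{\mathcal{H}}$ is a $\mathbb{Q}$-linear tensor functor; exactness on the abelian hearts follows from exactness on the triangulated level together with the fact that a distinguished triangle whose three vertices all lie in the heart is a short exact sequence there; conservativity is Remark \ref{conservative}, and an exact conservative additive functor between abelian categories is automatically faithful. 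The forgetful functor is an exact faithful $\mathbb{Q}$-linear tensor functor as well, so $\omega$ has the required properties. The main obstacle is the tensor closure of $\mathcal{H}_A^{\G,f}$, which ultimately rests on the Künneth formula being trivial over the field $\mathbb{Q}$ together with the tensor compatibility of $q$; once that is secured, everything else is a formal consequence of Theorem \ref{criterion for rigid}, the conservativity of $q$, and cohomological connectedness.
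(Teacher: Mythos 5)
Your proof is correct, and it reaches the same conclusion via a route that is genuinely different from the paper's in two of the steps. For tensor closure of the heart, the paper instead proves a structural lemma (that $\mathcal{H}_A^{\G,f}$ is the smallest abelian subcategory containing the objects $A \otimes V$ and closed under extensions, via induction on the weight filtration), and deduces tensor closure and stability under duality from that description; you sidestep this by observing that $q$ is a tensor functor and applying K\"unneth over the field $\mathbb{Q}$ to see directly that $q(M \otimes_A^{\mathbb{L}} N)$ is concentrated in degree zero. For faithfulness of the fibre functor, the paper again does an explicit induction on the weight filtration, reducing to a Hom vanishing $\mathrm{Hom}_{\mathcal{H}_A^{\G,f}}(A\otimes V, A\otimes W)=0$ for $|V|>|W|$; you instead invoke the formal fact that an exact additive functor between abelian categories which reflects zero objects is faithful, and read off the reflection of zero from Remark \ref{conservative} together with $t$-exactness of $q$. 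Both routes are valid; yours is shorter and more formal, while the paper's extension-generator lemma gives concrete structural information about the heart that is reused elsewhere. You also spell out $\mathrm{End}(1)=\mathbb{Q}$ from cohomological connectedness, which the paper leaves implicit. One minor phrasing issue: you invoke ``exactness on the triangulated level'' to get exactness on hearts, but what is really needed (and what holds here, by the very definition of the $t$-structure via $q$) is $t$-exactness of $q$; the point is that $q$ sends $\mathcal{D}^{\G,f,\leq 0}_A$ into $\mathcal{D}^{\G,f,\leq 0}_{\mathbb{Q}}$ and likewise for $\geq 0$, which is what guarantees the triangle image lands in the heart. With that clarification your argument is complete.
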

\begin{proof} The derived tensor product makes $\mathcal{H}^{\G, f}_{A}$ into an abelian tensor category. First we give a description about $\mathcal{H}^{\G, f}_{A}$.
\begin{lem}
$\mathcal{H}^{\G, f}_{A}$ is the smallest abelian subcategory of $\mathcal{H}^{\G, f}_{A}$ containing the objects $A \otimes V$, where $V$ is any rational $\G$-representation, and closed under extensions in $\mathcal{H}^{\G, f}_{A}$.
\end{lem}
\begin{proof} (Induction on the weight filtration.) Let $\mathcal{H}^{\G, T}_{A}$ be the full abelian subcategory containing all the objects $A \otimes V$, where $V$ is any rational $\G$-representation, and closed under extensions in $\mathcal{H}^{\G, f}_{A}$. Let $M \in \mathcal{H}^{\G, f}_{A}$ and $N = min \{n | W_{n}M \neq 0 \}$. Then we have an exact sequence:
$$0 \rightarrow gr^{W}_{N}M \rightarrow M \rightarrow W^{> N}M \rightarrow 0.$$
By Lemma \ref{closed under summand}, we have $gr^{W}_{N}M \cong A \otimes C$, where $C$ is in $D^{b}(\G)$. Because the category of representations of $\G$ is semisimple, we can think $C$ as a direct sum of rational $\G$-representations with some shifts. Assume there exists a summand $W[i]$ of $C$ with shift $i \neq 0$. Then applying $q$, we get that $0 \neq H^{i}(q(gr^{W}_N M)) \subset H^{i}(qM)$, which is a contradiction of our choice of $M \in\mathcal{H}^{\G, f}_{A} $. This implies that $gr^{W}_{N}M \in \mathcal{H}^{\G, T}_{A}$. By induction on the length of the weight filtration, $W^{> N}M$ is in $\mathcal{H}^{\G, T}_{A}$. So $M \in \mathcal{H}^{\G, T}_{A}$ and $\mathcal{H}^{\G, T}_{A} = \mathcal{H}^{\G, f}_{A}$.
\end{proof}

Since $(A \otimes V)^{\vee} = A \otimes V^{\vee}$, where $V^{\vee}$ is the dual representation of $V$, it follows from the above description that $M \rightarrow M^{\vee}$ restricts from $\mathcal{D}^{\G, f}_{A}$ to an exact involution on $\mathcal{H}^{\G, f}_{A}$. $\mathcal{H}^{\G, f}_{A}$ is rigid because $\mathcal{D}^{\G, f}_{A}$ is rigid.
\

The identity for the tensor product is $A$ and $\mathcal{H}^{\G, f}_{A}$ is $\mathbb{Q}$-linear.
\

We have a rigid tensor functor $q: \mathcal{H}^{\G, f}_{A} \rightarrow \mathcal{H}^{\G, f}_{\mathbb{Q}}$. Notice that $\mathcal{H}^{\G, f}_{\mathbb{Q}}$ is equivalent to the category of rational representations of $\G$. So there is a faithful forgetful functor $w: \mathcal{H}^{\G, f}_{\mathbb{Q}} \rightarrow Vec_{\mathbb{Q}}$. We need only to see that $q$ is faithful.
\

Recall we can identify $q$ with $gr^{W}_{*} = \oplus gr^{W}_{n}$. Let $f: M \rightarrow N$ be a map in $\mathcal{H}^{f}_{A}$ such that $gr^{W}_{n}(f) = 0$ for all $n$. We need to show that $f = 0$. Again do the induction on the length of the weight filtration. We may assume that $W^{n}f = 0$, where $n$ is the minimal integer such that $W_{n}M \oplus W_{n}N \neq 0$. Thus $f$ is given by a map
$$\tilde{f}: W^{> n}M \rightarrow gr^{W}_{n}N.$$
Claim: $\tilde{f} = 0$. 
\

Using the induction on the weight filtration, we need to show the following statement:
\

Given $V$ and $W$ pure weight rational $\G$-representations such that $|V| > |W|$, then we have:
$$Hom_{\mathcal{H}^{\G, f}_{A}}(A \otimes V, A \otimes W) \cong 0.$$
Firstly we have:
\begin{equation} \nonumber
\begin{split}
& Hom_{\mathcal{H}^{\G, f}_{A}}(A \otimes V, A \otimes W) \cong Hom_{\ml{D}^{\G, f}_{A}}(A \otimes V, A \otimes W) \\
\cong & Hom_{D(\G)}(V, A \otimes W) \cong Hom_{\G}(\mb{Q}, H^0(A \otimes W \otimes V^{\vee})).
\end{split}
\end{equation}
Because $A$ is connected, $H^0(A \otimes W \otimes V^{\vee}) \cong W \otimes V^{\vee}$, which is a rational representation of $\G$ with Adams degree strictly smaller than zero. This implies that:
$$Hom_{\G}(\mb{Q}, H^0(A \otimes W \otimes V^{\vee})) \cong 0.$$
Therefore we get that $q$ is faithful.
\end{proof}

\section{The bar construction} \label{bar con}
Let $A$ be a cdga over $\G$ and let $M, N$ be two dg $A$- modules. Then we define:
$$T^{\G}(N, A, M) = N \otimes T(A) \otimes M,$$
where $T(A) = \mathbb{Q} \oplus A \oplus (A \otimes A) \oplus \cdots = \oplus_{r \geq 0}T^{r}(A)$ is the tensor algebra. It is spanned by the elements of the form $n[a_{1}| \cdots |a_{r}]m$. Notice that $T^{\G}(N, A, M)$ is a simplicial graded abelian group with $N \otimes T^{r}(A) \otimes M$ in degree $r$, whose face maps are:
$$\delta_{0}(n[a_{1}| \cdots |a_{r}]m) = n a_{1}[a_{2}| \cdots |a_{r}]m,$$
$$\delta_{i}(n[a_{1}| \cdots  |a_{r}]m) = n a_{1}[a_{2}| \cdots |a_i a_{i+1}| \cdots | a_{r}]m, \ 1 \leq i \leq r-1$$
$$\delta_{r}(n[a_{1}| \cdots |a_{r-1}] a_{r} m) = n [a_{1}|\cdots |a_{r-1}]a_rm,$$
and degeneracies are:
$$s_{i}(n[a_{1}| \cdots |a_{r}]m) = n[a_{1}| \cdots |a_{i-1}| 1 |a_{i}| \cdots |a_{r}]m.$$
Define:
$$\delta = \sum_{0 \leq i \leq r}(-1)^{i}\delta_{i}: N \otimes T^{r}(A) \otimes M \rightarrow N \otimes T^{r - 1}(A) \otimes M$$
Let $D^{\G}(N, A, M)$ be the degenerate elements, those elements are spanned by the images of the $s_{i}$ for every $i$. 
\begin{df}
Define the bar complex of $M$ and $N$ to be:
$$B^{\G}(N, A, M) = T^{\G}(N, A, M)/D^{\G}(N, A, M).$$
\end{df}
Note that $B^{\G}(N, A, M)$ is a bicomplex. The total differential is defined by
\begin{equation} \nonumber
\begin{split}
& d(n[a_{1}| \cdots |a_{r}]m) \\
=  & \partial(n[a_{1}| \cdots |a_{r}]m)) + (-1)^{deg(n) + deg(m) + \sum deg(a_{i})}\delta(n[a_{1}| \cdots |a_{r}]m),
\end{split}
\end{equation}
where $\partial$ denotes the usual differential of $A$.
We will consider the following special case that $M = N = \mathbb{Q}$, which is denoted by $\bar{B}^{\G}(A)$, called the reduced bar construction. We collect formal properties of $\bar{B}^{\G}(A)$.
\begin{itemize}
\item
(Shuffle product) $\cup: \bar{B}^{\G}(A) \otimes \bar{B}^{\G}(A) \rightarrow \bar{B}^{\G}(A)$
$$[a_{1}|\cdots|a_{p}] \cup [a_{p+1}|\cdots|a_{p+q}] = \sum sgn(\sigma)[x_{\sigma(1)}|\cdots|x_{\sigma(p+q)}]$$
where the sum is over all $(p,q)$ shuffles $\sigma \in \Sigma_{p+q}$ (recall $\Sigma_{p+q}$ is the symmetric group on $p+q$ letters) and the sign of $\sigma$ is taking into account the degree of $a_{i}$.
\item
(Coproduct) $\Delta: \bar{B}^{\G}(A) \rightarrow \bar{B}^{\G}(A) \otimes \bar{B}^{\G}(A)$
$$\Delta([a_{1}|\cdots|a_{n}]) = \sum_{i = 0}^{n}(-1)^{i (deg(a_{i+1}) + \cdots + deg(a_{n}))}[a_{1}|\cdots|a_{i}] \otimes [a_{i+1}|\cdots|a_{n}].$$
\item
(involution) $\iota: \bar{B}^{\G}(A) \rightarrow \bar{B}^{\G}(A)$
$$\iota([a_{1}|\cdots|a_{n}]) = (-1)^{m} [a_{n}|a_{n-1}| \cdots |a_{1}], m = \sum_{1 \leq i < j \leq n} deg(a_{i})deg(a_{j}).$$
\end{itemize}
These properties make $\bar{B}^{\G}(A)$ a graded-commutative differential graded Hopf algebra in the category of $\G$-representations. So $H^{0}(\bar{B}^{\G}(A))$ is a commutative Hopf algebra over $\G$. If consider the Adams grading structure, we can see that $\bar{B}^{\G}(A)$ also has the Adams grading structure, and $\chi_{A} = H^{0}(\bar{B}^{\G}(A))$ is an Adams graded Hopf algebra over $\G$ (or a graded Hopf algebra object in $\mathbf{Rep}_{\G}$).
\begin{df} \label{gamma}
Define $\gamma_{A} = I_{\chi_{A}}/(I_{\chi_{A}})^{2}$, where $I_{\chi_{A}}$ is the augmentation ideal of $\chi_{A}$.
\end{df}
\begin{lem}
$\gamma_{A}$ determines a structure of a cdga over $\G$.
\end{lem}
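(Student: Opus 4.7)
The plan is to realize the cdga as the Chevalley--Eilenberg cochain algebra of a natural Lie coalgebra structure on $\gamma_A$, in the spirit of the dual of the Milnor--Moore picture for a commutative Hopf algebra.

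First, I would extract a Lie coalgebra structure on $\gamma_A$. Write $I := I_{\chi_A}$. Since $\chi_A = H^{0}(\bar B^{\G}(A))$ sits in a single cohomological degree, it is an ordinarily commutative Hopf algebra in $\mathbf{Rep}_{\G}$, and the reduced coproduct
\begin{equation*}
\bar\Delta(x) := \Delta(x) - x \otimes 1 - 1 \otimes x
\end{equation*}
restricts to a $\G$-equivariant, Adams-homogeneous map $\bar\Delta \colon I \to I \otimes I$. Expanding $\bar\Delta(xy) = \Delta(x)\Delta(y) - xy\otimes 1 - 1\otimes xy$ using that $\Delta$ is a morphism of algebras yields, for $x,y\in I$,
\begin{equation*}
\bar\Delta(xy) \equiv x\otimes y + y\otimes x \pmod{I\otimes I^{2} + I^{2}\otimes I},
\end{equation*}
so the antisymmetrization $\delta := \bar\Delta - \tau\circ\bar\Delta$ sends $I^{2}$ into $I\otimes I^{2} + I^{2}\otimes I$ and therefore descends to a $\G$-equivariant antisymmetric map $\delta\colon \gamma_A \to \gamma_A\wedge\gamma_A$. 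The coassociativity of $\Delta$, reduced modulo the analogous ideals in $I^{\otimes 3}$, passes to the co-Jacobi identity for $\delta$.

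Next, I would assemble the cdga. Place $\gamma_A$ in cohomological degree $1$ (preserving its Adams grading) and form the free graded-commutative algebra
\begin{equation*}
E(\gamma_A) := \mathrm{Sym}^{\bullet}\bigl(\gamma_A[-1]\bigr)
\end{equation*}
in $\mathbf{Rep}_{\G}$; its Adams decomposition is inherited from that of $\gamma_A$, with $\mathbb{Q}$ the obvious direct summand of $E(\gamma_A)^{0}\langle 0\rangle$. Extend $\delta \colon \gamma_A[-1] \to \gamma_A[-1]\wedge \gamma_A[-1]$ to a derivation $d$ of cohomological degree $+1$ on $E(\gamma_A)$ by the graded Leibniz rule; by construction $d$ preserves the Adams grading and is $\G$-equivariant. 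Graded-commutativity, associativity, unitality, and Leibniz are built in; the only nontrivial axiom, $d^{2}=0$, reduces via Leibniz to $d^{2}|_{\gamma_A[-1]}=0$, which is precisely the co-Jacobi identity for $\delta$ already established. This produces a cdga over $\G$ in the sense of the paper's Definition.

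The chief obstacle is the careful ideal-bookkeeping needed to verify both that the antisymmetrized reduced coproduct genuinely descends modulo $I^{2}$ and that coassociativity of $\Delta$ yields co-Jacobi for $\delta$ modulo all relevant $I^{2}$-ideals in $I^{\otimes 3}$. Once those two delicate but essentially formal calculations are carried out, the rest is the standard Chevalley--Eilenberg construction applied internally in $\mathbf{Rep}_{\G}$.
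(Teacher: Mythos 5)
Your proof is correct and takes essentially the same approach as the paper: the paper invokes the quoted Sullivan/Kriz--May fact that a co-Lie algebra is the same as a DGA structure on $\wedge(\gamma[-1])$, whereas you explicitly unwind this as the Chevalley--Eilenberg construction (reduced coproduct, antisymmetrize, verify descent mod $I^2$ and co-Jacobi, extend by Leibniz) -- the same argument carried out in detail rather than cited. The only cosmetic difference is that the paper writes the antisymmetrization using the bar-construction involution $\iota$ while you use the flip $\tau$; on the indecomposable quotient these produce the same cobracket.
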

\begin{rmk}
Recall the definition of co-Lie algebras firstly. A co-Lie algebra is a $k$-module $\gamma$ with a cobracket map $\gamma \rightarrow \gamma \otimes \gamma$ such that the dual $\gamma^{\vee}$ is a Lie algebra via the dual homomorphism. Sullivan showed the following statement (Lemma 2.7 in \cite{KM} or p.279 in \cite{Sul}):
\

A co-Lie algebra $\gamma$ determines and is determined by a structure of DGA on $\wedge(\gamma[-1])$.
\end{rmk}
\begin{proof} The coproduct $\Delta$ of $\bar{B}^{\G}(A)$ induce a coproduct on $\chi_{A}$, denoted also by $\Delta$, satisfying:
$$\Delta(x) = x \otimes 1 + 1 \otimes x$$
for $x \in I_{\chi_{A}}$. (And also the involution.) Then $\Delta - \iota \Delta$ gives the cobracket on $\chi_{A}$. So is $\gamma_{A}$ and the quotient map $I_{\chi_{A}} \rightarrow \gamma_{A}$ is a map of co-Lie algebras. By the remark above, we know $\gamma_{A}$ determines a cdga structure $\wedge(\gamma_{A}[-1])$. This structure is compatible with the $\G$-representation structure. Notice that $I_{\chi_{A}}$ does not have the Adams degree $0$ part. Then $\wedge(\gamma_{A}[-1]) \law 0 \raw = \mathbb{Q}$. So $\wedge(\gamma_{A}[-1])$ is a cdga over $\G$.
\end{proof}
\begin{lem} \label{Bar preserve under quasi-iso}
Let $A$ be a cdga over $\G$. Then $H^{*}(\bar{B}^{\G}(A))$ and $\chi_{A}$ is functorial in $A$ and is a quasi-isomorphism invariant in $A$.
\end{lem}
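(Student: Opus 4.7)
The plan is to split the statement into two assertions, functoriality and quasi-isomorphism invariance, and to prove both at the level of $\bar{B}^{\G}(A)$; the statements for $\chi_A$ then follow by passage to $H^{0}$ and to the quotient $I/I^{2}$.

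For functoriality, given a morphism $f: A \to B$ of cdgas over $\G$, I would apply $f$ slot-by-slot to define
$$\bar{B}^{\G}(f): \bar{B}^{\G}(A) \to \bar{B}^{\G}(B), \qquad [a_{1}|\cdots|a_{r}] \mapsto [f(a_{1})|\cdots|f(a_{r})].$$
A direct inspection shows this commutes with the internal differential $\partial$, the bar differential $\delta$, the shuffle product, the coproduct, and the involution, so it is a map of differential graded Hopf algebras in $\G$-representations. The assignments $A \mapsto H^{*}(\bar{B}^{\G}(A))$, $A \mapsto \chi_{A}$, and $A \mapsto \gamma_{A}$ are then visibly functorial.

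For quasi-isomorphism invariance, the key tool is the bar filtration
$$F_{p}\bar{B}^{\G}(A) := \bigoplus_{r \leq p} \bigl(A^{\otimes r}/D^{\G}\bigr),$$
an exhaustive increasing filtration by subcomplexes (since $\partial$ preserves and $\delta$ strictly lowers the bar length $r$). The map $\bar{B}^{\G}(f)$ preserves the filtration, and on the $p$-th associated graded piece $\operatorname{gr}_{p}^{F}\bar{B}^{\G}(A) = A^{\otimes p}/D^{\G}$ the bar differential $\delta$ vanishes, so only $\partial$ remains and the induced map becomes $f^{\otimes p}$. Since we work over $\mathbb{Q}$, the K\"unneth theorem gives that $f^{\otimes p}$ is a quasi-isomorphism whenever $f$ is.

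From there I would proceed by induction on $p$, using the short exact sequences
$$0 \to F_{p-1}\bar{B}^{\G}(A) \to F_{p}\bar{B}^{\G}(A) \to \operatorname{gr}_{p}^{F}\bar{B}^{\G}(A) \to 0,$$
together with the associated long exact sequences in cohomology and the five lemma, to conclude that $F_{p}\bar{B}^{\G}(f)$ is a quasi-isomorphism for every $p$. Because $\bar{B}^{\G}(A) = \operatorname{colim}_{p} F_{p}\bar{B}^{\G}(A)$ and cohomology commutes with filtered colimits of chain complexes of $\G$-representations, $\bar{B}^{\G}(f)$ itself is a quasi-isomorphism; restricting to degree zero gives the corresponding statement for $\chi_{A}$. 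The main obstacle I expect is handling the passage to the colimit without any Adams-connectivity hypothesis on $A$; the inductive argument above sidesteps a direct convergence analysis of the bar spectral sequence by working one filtration step at a time and invoking exactness of filtered colimits in the semisimple category of $\G$-representations.
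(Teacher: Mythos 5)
Your argument is correct and rests on the same idea as the paper's: the paper's one-line proof invokes the Eilenberg--Moore spectral sequence (citing Lemma 2.21 of Bloch--Kriz), which is precisely the spectral sequence of the word-length filtration you use, with $E_1$-page computed from the associated graded pieces $(IA)^{\otimes p}$ via K\"unneth. Unrolling that spectral sequence into an induction on $F_p$ followed by a filtered colimit is a clean way to sidestep convergence discussion, but it is the same filtration and the same K\"unneth step, so this is essentially the paper's approach made explicit.
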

\begin{proof} Use the Eilenberg-Moore spectral sequence. See Lemma 2.21 in \cite{BK}.
\end{proof}
\begin{thm} \label{1-minimal model}
Let $A$ be a cohomologically connected cdga over $\G$. Then the $1$-minimal model $A\{1\}$ of $A$ is isomorphic to $\wedge(\gamma_{A}[-1])$.
\end{thm}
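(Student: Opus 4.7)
The plan is to verify that $M := \wedge(\gamma_A[-1])$ satisfies the defining properties of a $1$-minimal model of $A$, and then invoke the uniqueness of $1$-minimal models up to isomorphism (Remark \ref{min model preserve under qi}) to conclude $A\{1\} \cong M$. Concretely, I will check that (a) $M$ is a generalized nilpotent cdga over $\G$ generated in cohomological degree $1$, and (b) there is a morphism of cdgas over $\G$, $s \colon M \to A$, inducing an isomorphism on $H^1$ and an injection on $H^2$.

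For (a), freeness is built into $\wedge(-)$ and all generators sit in cohomological degree $1$. Cohomological connectedness of $A$ forces $\chi_A\langle 0\rangle = \mathbb{Q}$, so $I_{\chi_A}$, and hence $\gamma_A$, is concentrated in strictly positive Adams degrees. The co-Lie cobracket on $\gamma_A$, induced from $\Delta - \iota\Delta$ on $\chi_A$, sends $\gamma_A\langle r\rangle$ into $\bigoplus_{r_1 + r_2 = r,\, r_1,r_2 > 0}\gamma_A\langle r_1\rangle \otimes \gamma_A\langle r_2\rangle$. Induction on Adams degree $r$ then shows $\gamma_A\langle r\rangle[-1] \subset M\langle 1, r\rangle$, so $M\langle 1\rangle = \bigcup_q M\langle 1, q\rangle$, giving generalized nilpotency.

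For (b), I construct $s$ inductively on Adams degree. Let $M_r$ be the sub-cdga of $M$ generated by $\bigoplus_{s \leq r}\gamma_A\langle s\rangle[-1]$, so $M_0 = \mathbb{Q}$ and $M = \bigcup_r M_r$. Assuming $s_{r-1}\colon M_{r-1} \to A$ has been built with an $H^1$-iso and $H^2$-injection in Adams degrees $\leq r-1$, extend to $M_r$ using the identification, as $\G$-representations,
\[
0 \longrightarrow H^1(A)\langle r\rangle \longrightarrow \gamma_A\langle r\rangle \longrightarrow K_r \longrightarrow 0,
\]
where $K_r \subset H^2(M_{r-1})\langle r\rangle$ equals the kernel of $(s_{r-1})_{*}\colon H^2(M_{r-1})\langle r\rangle \to H^2(A)\langle r\rangle$. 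This extension arises from the bar spectral sequence applied to $\chi_A = H^0(\bar B^{\G}(A))$: the primitives of $\chi_A$ in Adams degree $r$ yield $H^1(A)\langle r\rangle$, while the indecomposables of higher bar filtration in Adams degree $r$ encode precisely the Sullivan-Chen Massey-product obstructions that make up $K_r$. By semisimplicity of $\G$-representations this sequence splits. Lift the summand $H^1(A)\langle r\rangle$ to closed representatives in $A^1\langle r\rangle$ and, for each element of $K_r$, lift to an element of $A^1\langle r\rangle$ whose differential equals the specified coboundary. Extending multiplicatively defines $s_r\colon M_r \to A$, and passing to the colimit yields $s\colon M \to A$, which by construction is an $H^1$-iso and $H^2$-injection.

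The main obstacle is the identification in step (b) of $\gamma_A\langle r\rangle$ as an extension of $K_r$ by $H^1(A)\langle r\rangle$. This is the Hopf-algebraic reformulation, in the $\G$-equivariant Adams-graded setting, of the classical Sullivan-Chen description of primary cohomology together with Massey products via the bar construction, and requires a careful analysis of the bar spectral sequence of $A$ together with the semisimplicity of $\mathbf{Rep}_{\G}$. The Adams connectedness is essential: in each Adams degree $r$ only finitely many bar-filtration steps contribute, and the ``new'' indecomposables at Adams degree $r$ are determined by data in strictly lower Adams degrees, so the inductive construction terminates and the two sides of the desired isomorphism are built up in lockstep.
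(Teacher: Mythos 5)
Your overall strategy --- show $M := \wedge(\gamma_A[-1])$ satisfies the axioms of a $1$-minimal model and then invoke the uniqueness statement in Remark~\ref{min model preserve under qi} --- is a reasonable one, and differs in organization from what the paper does. Your part~(a) is correct and cleanly argued: Adams connectedness gives $\chi_A\law 0\raw = \mb{Q}$, the cobracket on $\gamma_A$ strictly lowers Adams degree in each tensor factor, and induction on Adams degree identifies the generalized nilpotent filtration of $\wedge(\gamma_A[-1])$ with the Adams filtration.

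The gap is in part~(b), which is where the real content of the theorem lives. You assert the short exact sequence
\[
0 \longrightarrow H^1(A)\law r\raw \longrightarrow \gamma_A\law r\raw \longrightarrow K_r \longrightarrow 0,
\]
with $K_r = \ker\bigl((s_{r-1})_*\colon H^2(M_{r-1})\law r\raw \to H^2(A)\law r\raw\bigr)$, and say it ``arises from the bar spectral sequence applied to $\chi_A$.'' This is not an argument: the filtration and spectral sequence live on $\bar B^{\G}(A)$, not on the Hopf algebra $\chi_A = H^0(\bar B^{\G}(A))$; and identifying the Adams-degree-$r$ indecomposables of $H^0(\bar B^{\G}(A))$ with precisely this extension --- where $K_r$ depends on the partial model $M_{r-1}$ and the map $s_{r-1}$ being built --- is exactly the substance of the theorem, not something that can be cited. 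The language (``primitives'' versus ``indecomposables'', ``Sullivan--Chen Massey-product obstructions'') signals the right circle of ideas but does not locate or prove the claim. In contrast, the paper first uses Lemma~\ref{Bar preserve under quasi-iso} to replace $A$ by its minimal model, hence reduces to $A$ generalized nilpotent (and then, via direct limits, to $A$ nilpotent with a free generator), where the filtration spectral sequence of Lemma~\ref{ss} can actually be computed, giving $Gr_F H^*(\bar B^{\G}(A)) \cong \wedge(V[1])$ and hence an isomorphism of co-Lie algebras $QH^*(\bar B^{\G}(A)) \cong (QA)[1]$. Restricting to degree $0$ then literally reads off $\gamma_A \cong QA^1$. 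Your proof skips this reduction entirely and provides no substitute for the spectral sequence computation. To repair your argument you would either need to carry out the inductive Adams-degree analysis of $H^0(\bar B^{\G}(A))$ honestly (a nontrivial bicomplex argument), or do as the paper does and first pass to a nilpotent model of $A$, after which the asserted structure on $\gamma_A\law r\raw$ can be extracted from Lemma~\ref{ss}.
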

\begin{proof} Follow the proof of \cite{BK}. From Lemma \ref{Bar preserve under quasi-iso}, we can assume that $A$ is a generalized nilpotent cdga over $\G$. A generalize nilpotent cdga $A$ over $\G$ is a direct limit $(A_{\alpha})$ of nilpotent cdga's.
So we can assume that $A$ is a nilpotent cdga over $\G$ with a free generator $E$ which is a complex of $\G$-representations. We need to use the following lemma, whose proof is totally the same as Lemma 2.32 in \cite{BK}.
\begin{lem} \label{ss}
Let $A$ be as above with free generator $\G$-representation $V$. Fix an integer $s > 0$. Consider the decreasing filtration on $\bar{B}^{\G}(A)$,
$$F^{k}\bar{B}^{\G}(A) = \langle x_{11} \cdots x_{1n_{1}} \otimes \cdots \otimes x_{m1}\cdots x_{mn_{m}} | $$
$$s\sum deg x_{ij} + s\sum n_{j} - (2s-1)m \geq k \rangle.$$
Then, for a sufficiently large $s$ depending on $A$, the resulting spectral sequence satisfies
$$\wedge(V[1]) \cong E_{2s} \cong E_{\infty} \cong Gr_{F}H^{*}(\bar{B}^{\G}(A)).$$
\end{lem}
\noindent Assuming this lemma, the projection map $\bar{B}^{\G}(A) \rightarrow A$ induces a map
$$\phi: QH^{*}(\bar{B}^{\G}(A)) \rightarrow (QA)[1].$$
This is because that the boundaries and decomposable elements map to decomposable elements. In fact, boundaries are the sum of the form $\partial(a)$ and $a_1 \cdot a_2$. Using that the generalized nilpotence implies the minimal property (Proposition \ref{generalized nil equi minimal}), both of these elements are decomposable. By the above lemma, this is an isomorphism. Furthermore, $\phi$ is an isomorphism between co-Lie algebras.
\
 
Restricting $\phi$ to the degree $0$ part, we know that $$\wedge(\gamma_{A}[-1]) = \wedge (QH^0(\bar{B}^{\G}(A))[-1])$$ is isomorphic to $1$-minimal model of $A$.
\end{proof}

\section{Alternative identifications of the category \texorpdfstring{$\mathcal{H}_{A}^{f}$}{mathcal{H}}} \label{heart}
In Proposition \ref{generalized nil equi minimal}, we show that the connected generalized nilpotent cdga over $\G$ can be recognized as a connected minimal cdga over $\G$. Similarly we can also define the minimal cell $A$-module.
\begin{df}
An Adams degree bounded below cell $A$-module is minimal if it is almost free and $d(M) \subset (IA)M$.
\end{df}
\begin{df}
Let $M$ be an Adams degree bounded below $A$-module. We define the nilpotent filtration $\{F_{t}M\}$ by letting $F_{0}M = 0$ and inductively letting $F_{t}M$ be the sub $A$-module generated by $F_{t-1}M \cup \{m | dm \in F_{t-1}M\}$.
\end{df}
\begin{rmk}
The minimal cell modules have the similar properties as the connected minimal cdgas. We can also define the generalized nilpotent $A$-modules. Because the proof of the following properties is the same as Part IV, section 3 in \cite{KM}, we only list the main properties.
\begin{itemize}
\item
A bounded below $A$-module $M$ is generalized nilpotent if and only if it is a minimal cell $A$ module.
\item
Let $N$ be a dg $A$-module. Then there is a quasi-isomorphism $e: M \rightarrow N$, where $M$ is a minimal $A$-module. This is unique up to the homotopy.
\end{itemize}
\end{rmk}
Next, we want to use another way to describe cell $A$-modules, which is called the connection matrix. See \cite{KM}(namely the twisting matrix) or \cite{L}.
\begin{df}
Let $(M, d_{M})$ be a complex of $\G$-representations. An $A$-connection for $M$ is a map
$$\Gamma: M \rightarrow IA \otimes M$$
of $\G$ representations and cohomological degree $1$. We say $\Gamma$ is flat if
$$d\Gamma + \Gamma^{2} = 0.$$
Here $d\Gamma = d_{IA \otimes M} \circ \Gamma + \Gamma \circ d_{M}$ and we extend $\Gamma$ to
$$\Gamma: IA \otimes M \rightarrow IA \otimes M$$
by the Leibniz rule.
\end{df}
\begin{rmk}
Given a connection $\Gamma: M \rightarrow IA \otimes M$, we define
$$d_{0}: M \rightarrow A \otimes M = M \oplus IA \otimes M, m \rightarrow d_{M}m \oplus \Gamma m$$
and extend $d_{0}$ to $d_{\Gamma}: A \otimes M \rightarrow A \otimes M$ by the Leibniz rule. The above equation is equivalent to saying that $d_{\Gamma}^{2} = 0$.
\end{rmk}
\begin{df}
We call an $A$-connection $\Gamma$ for $M$ nilpotent if $M$ admits a filtration by complexes of $\G$-representations:
$$0 = M_{-1} \subset M_{0} \subset \cdots \subset M_{n} \subset \cdots \subset M$$
such that $M = \cup_{n} M_{n}$ and such that
$d_{M}(M_{n}) \subset M_{n-1}$ and $\Gamma(M_{n}) \subset IA \otimes M_{n-1}$
for every $n \geq 0$.
\end{df}
\begin{rmk}
Let $\Gamma: M \rightarrow IA \otimes M$ be a flat nilpotent connection. Then the dg $A$-module $(A \otimes M, d_{\Gamma})$ is a cell module.
\end{rmk}
\begin{lem}
Let $\Gamma: M \rightarrow IA \otimes M$ be a flat connection. Suppose there is an integer $r_{0}$ such that $|m| \geq r_{0}$ for all $m \in M$. Then $\Gamma$ is nilpotent.
\end{lem}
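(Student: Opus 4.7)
The plan is to build the required filtration directly from the Adams grading on $M$. Since $A$ is assumed to be Adams connected (see Definition \ref{def of cdga over GL_2}), the augmentation ideal decomposes as $IA = \bigoplus_{r \geq 1} A\langle r\rangle$, so every element of $IA$ has strictly positive Adams degree. This is the one feature of $A$ that must drive the argument, because it is what will let $\Gamma$ drop the filtration index by one.

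Concretely, I would set
$$M_n \;=\; \bigoplus_{s \leq r_0 + n} M\langle s\rangle \;\subset\; M$$
for $n \geq 0$, and $M_{-1} = 0$. The lower bound hypothesis $|m| \geq r_0$ gives $M_{-1}=0$ and $M = \bigcup_{n} M_n$ at once. Because the Adams decomposition of $M$ is a decomposition into subcomplexes of $G$-representations (the differential $d_M$ respects Adams grading, as $d_M$ is a morphism of $G$-representations and the Adams pieces are $G$-stable summands), each $M_n$ is indeed a subcomplex of $G$-representations, and $d_M(M_n)\subset M_n$, which takes care of the $d_M$ part of the nilpotence condition.

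The crucial step is then the $\Gamma$ condition. Take a homogeneous $m\in M_n$ with $|m|=s\leq r_0+n$. Since $\Gamma$ is a morphism of $G$-representations and hence preserves Adams degree, any summand $a\otimes m'$ appearing in $\Gamma(m)\in IA\otimes M$ satisfies $|a|+|m'|=s$. But $a\in IA$ forces $|a|\geq 1$, so $|m'|\leq s-1\leq r_0+(n-1)$, which means $m'\in M_{n-1}$. Thus $\Gamma(M_n)\subset IA\otimes M_{n-1}$, completing the verification of nilpotence.

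The only potential obstacle is checking that $d_M$ really does respect the Adams decomposition — but this is baked into the running convention that all structure maps are morphisms of $G$-representations, which preserve weight and hence Adams grading. No use of the flatness equation $d\Gamma+\Gamma^2=0$ is needed; the statement is really about Adams bookkeeping, and the Adams connectedness of $A$ is what supplies the single unit of "drop" per application of $\Gamma$.
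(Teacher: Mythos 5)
There is a genuine gap. The definition of a nilpotent connection in the paper requires \emph{both} $d_{M}(M_{n}) \subset M_{n-1}$ and $\Gamma(M_{n}) \subset IA \otimes M_{n-1}$ --- the differential $d_M$, not just $\Gamma$, must strictly drop the filtration index. Your filtration $M_n = \bigoplus_{s \leq r_0+n} M\langle s\rangle$ only gives $d_M(M_n) \subset M_n$, since $d_M$ is a morphism of $\G$-representations and hence preserves Adams degree exactly; it never drops. So your filtration fails the $d_M$ half of the nilpotence condition, and the remark ``which takes care of the $d_M$ part'' is where the argument breaks. This condition is not a technicality: it is precisely what makes $(A \otimes M, d_\Gamma)$ into a cell module, where the differential on generators in filtration level $n$ must land in level $n-1$ (Definition \ref{df of cell modules}). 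For instance, with $M$ concentrated in a single Adams degree $r_0$ but with $d_M \neq 0$, your $M_0$ is all of $M$ and $d_M(M_0) \not\subset M_{-1}=0$.

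The Adams filtration does handle $\Gamma$, so the fix is to refine it in the $d_M$-direction. Since $d_M^2 = 0$, for each $s$ set $K_s = \ker d_M \cap M\langle s\rangle$ and note $d_M(M\langle s\rangle) \subset K_s$. Interleave: put $M_{-1}=0$, and for $k \geq 0$,
$$
M_{2k} \;=\; \Bigl(\bigoplus_{s < r_0+k} M\langle s\rangle\Bigr) \oplus K_{r_0+k},
\qquad
M_{2k+1} \;=\; \bigoplus_{s \leq r_0+k} M\langle s\rangle.
$$
Each $M_n$ is a subcomplex of $\G$-representations, $M = \bigcup_n M_n$, and $d_M(M_{2k+1}) \subset \bigoplus_{s\leq r_0+k} K_s \subset M_{2k}$ while $d_M(M_{2k}) \subset \bigoplus_{s\leq r_0+k-1} K_s \subset M_{2k-1}$. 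For $\Gamma$, the Adams degree of the $M$-component drops by at least one (your observation about $IA$), giving $\Gamma(M_{2k+1}) \subset IA \otimes M_{2k-1} \subset IA \otimes M_{2k}$ and $\Gamma(M_{2k}) \subset IA \otimes M_{2k-1}$. This verifies nilpotence. Your instinct about Adams connectedness of $A$ supplying the drop for $\Gamma$ is correct; what was missing is the refinement that makes $d_M$ drop as well.
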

\begin{proof}
The proof is the same as Lemma 1.13.3 in \cite{L}.
\end{proof}
\begin{df}
A morphism $f: (M, d_{M}, \Gamma_{M}) \rightarrow (N, d_{N}, \Gamma_{N})$ is a map of complexes of $\G$-representations:
$$f = f_{0} + f^{+}: M \rightarrow A \otimes N = N \oplus IA \otimes N$$
such that $d_{\Gamma_{N}}f = fd_{\Gamma_{M}}$.
\end{df}
\begin{df}
We denote the category of flat nilpotent connections over $A$ by $Conn^{\G}_{A}$ and denote the full subcategory of flat nilpotent connections on $M$ with $M$ a bounded complex of rational $\G$-representations by $Conn_{A}^{\G, f}$.
\end{df}
We can define a tensor operation on $Conn_{A}$ by
$$(M, \Gamma) \otimes (M^{'}, \Gamma^{'}) = (M \otimes M^{'}, \Gamma \otimes id + id \otimes \Gamma^{'}).$$
Complexes of $\mathbb{Q}$-vector spaces act on $Conn_{A}$ by:
$$(M ,\Gamma) \otimes K = (M , \Gamma) \otimes (K, 0).$$
We recall that $I$ is the complex
$$\mathbb{Q} \xrightarrow{\delta} \mathbb{Q} \oplus \mathbb{Q}$$
with $\mathbb{Q}$ in degree $-1$ and with connection $0$. We have the two inclusions $i_{0}, i_{1}: \mathbb{Q} \rightarrow I$.
\begin{df}
Two maps $f, g: (M, \Gamma) \rightarrow (M^{'}, \Gamma^{'})$ are homotopic if there is a map $h: (M, \Gamma) \otimes I \rightarrow (M^{'}, \Gamma^{'})$ satisfying $f = h \circ (id \otimes i_{0}), g = h \circ (id \otimes i_{1})$.
\end{df}
\begin{df}
Denote the homotopy category of $Conn^{\G}_{A}$ by $\mathcal{H}Conn^{\G}_{A}$, which has the same objects as $Conn^{\G}_{A}$ and morphisms are homotopy classes of maps in $Conn^{\G}_{A}$.
\end{df}
\begin{rmk}
When we pass to homotopy classes and given a cell $A$-module $M$, it is totally determined by the underlying $\G$-representation $M_{0}$, i.e., $M_{0} = M \otimes_{A} \mathbb{Q}$.
\end{rmk}
We list the main properties of flat connections, and the proof is the same as in Section 1.14 in \cite{L}.
\begin{itemize}
\item
The category of $A$-cell modules is equivalent to the category of flat nilpotent $A$-connections.
\item
The above equivalence passes to an equivalence of $\mathcal{H}Conn^{\G}_{A}$ with the homotopy category $\mathcal{KCM}^{\G}_{A}$ as triangulated tensor categories.
\item
Suppose that $A$ is connected. The second equivalence defines an equivalence of Tannakian categories $\mathcal{H}_{A}^{\G, f}$ and the category of flat connections on $\G$-representations $Conn^{\G, f}_{A}$.
\end{itemize}
Given $A$ a cohomologically connected cdga over $\G$, by Theorem \ref{1-minimal model}, we know that: 
$$\gamma_A = A\{1\}^1.$$
Assume that $A$ is a generalized nilpotent dga over $\G$, which implies that $A\{1\} \cong A$. Then the co-Lie algebra structure of $\gamma_A$ is given by the restriction of $d$ to $A^1$. Notice that, by the minimal property (Proposition \ref{generalized nil equi minimal}), $d$ factors through:
$$d: A^1 \to \wedge^2 A^1 \subset A^2.$$
Let $M$ be a complex of rational $\G$-representations and $\Gamma: M \rightarrow IA \otimes M$ is a flat connection. In fact, $\Gamma$ is a map $$\Gamma; M \rightarrow A^{1} \otimes M$$ and the flatness is just saying that $\Gamma$ makes $M$ into an Adams graded co-module for the co-Lie algebra $\gamma_{A}$ over $\G$. In fact, we have  equivalences between categories:
$$\mathcal{H}_{A}^{\G, f} \cong Conn^{\G, f}_{A} \cong co-rep^{\G, f}(\gamma_{A}).$$
\section{The main theorem} \label{main}
\begin{lem}  \label{cri for equi between tri}
Let $\mathcal{D}$ be a triangulated category with t-structure $(\mathcal{D}^{\leq 0}, \mathcal{D}^{\geq 0})$. We denote its heart by $\mathcal{H}$. Assume that there is a triangulated functor $\rho: D^{b}(\mathcal{H}) \rightarrow \mathcal{D}$ such that:
\begin{itemize}
\item
$\rho |_{\mathcal{H}[i]}$ is an inclusion for any $i \in \mathbb{Z}$;
\item
$\mathcal{D}$ is bounded, i.e., for any $M \in \mathcal{D}$, there exist $a \leq b \in \mathbb{Z}$ satisfying $M \in \mathcal{D}^{[a, b]} = \mathcal{D}_{\geq a} \cap \mathcal{D}_{\leq b}$.
\item
For any $M, N \in \mathcal{H}$ and $n \in \mathbb{Z}$, $\rho$ induces an isomorphism
$$Hom_{D^{b}(\mathcal{H})}(M, N[n]) \xrightarrow{\sim} Hom_{\mathcal{D}}(\rho(M), \rho(N)[n]).$$
Then $\rho$ is an equivalence between triangulated categories.
\end{itemize}
\end{lem}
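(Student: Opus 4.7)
The plan is to verify that $\rho$ is both fully faithful and essentially surjective, in each case by induction on the $t$-structure amplitude of the objects involved. Since $\rho$ is triangulated and the standard $t$-structure on $D^{b}(\mathcal{H})$ has heart $\mathcal{H}$ matched to the heart of $\mathcal{D}$ via hypothesis (1), the key inputs will be the isomorphism hypothesis (3) together with boundedness (2).

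\textbf{Step 1 (Fully faithful).} We aim to show that
$$\mathrm{Hom}_{D^{b}(\mathcal{H})}(M, N[n]) \xrightarrow{\rho} \mathrm{Hom}_{\mathcal{D}}(\rho M, \rho N[n])$$
is an isomorphism for every $M, N \in D^{b}(\mathcal{H})$ and every $n \in \mathbb{Z}$, by double induction on the cohomological amplitudes of $M$ and $N$ with respect to the standard $t$-structure on $D^{b}(\mathcal{H})$. The base case, where both objects are concentrated in single cohomological degrees, is exactly hypothesis (3) combined with the shift-compatibility of $\rho$. For the inductive step on $N$ (with cohomology in degrees $[a,b]$, $b > a$), apply $\rho$ to the truncation triangle $\tau^{\le b-1}N \to N \to H^{b}(N)[-b] \to \tau^{\le b-1}N[1]$ in $D^{b}(\mathcal{H})$, obtaining a distinguished triangle in $\mathcal{D}$. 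Applying $\mathrm{Hom}(M, -)$ to both yields a morphism of long exact sequences in which the four vertical maps not at $N$ are isomorphisms by induction; the five lemma gives the conclusion at $N$. The symmetric reduction on $M$ completes this step.

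\textbf{Step 2 (Essentially surjective).} For $X \in \mathcal{D}$, by hypothesis (2) there exist $a \le b$ with $X \in \mathcal{D}^{[a,b]}$; we induct on $b - a$. When $b = a$, the object $X \cong H^{a}(X)[-a]$ lies in $\mathcal{H}[-a]$, hence in the essential image of $\rho$ by hypothesis (1). When $b > a$, consider the truncation triangle
$$\tau^{\le b-1}X \to X \to H^{b}(X)[-b] \to \tau^{\le b-1}X[1]$$
in $\mathcal{D}$: by induction the left term is $\rho A$ for some $A \in D^{b}(\mathcal{H})$, and the right term is $\rho B$ for $B = H^{b}(X)[-b]$ viewed in $D^{b}(\mathcal{H})$ via hypothesis (1). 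By Step 1, the connecting morphism $\rho B \to \rho A[1]$ lifts uniquely to some $f: B \to A[1]$ in $D^{b}(\mathcal{H})$. Completing $f$ to a distinguished triangle $A \to C \to B \to A[1]$ and applying $\rho$ produces a distinguished triangle in $\mathcal{D}$ whose first two vertices match those of the truncation triangle; the standard morphism-of-triangles axiom together with the five lemma then gives $\rho C \cong X$.

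The principal obstacle is arranging the double induction in Step 1 so that the inductive hypothesis applies at every vertex of the long exact sequence simultaneously, which amounts to observing that each truncation strictly reduces amplitude and that $\rho$ carries truncation triangles in $D^{b}(\mathcal{H})$ to distinguished (though not a priori truncation) triangles in $\mathcal{D}$. Once Step 1 is in hand, essential surjectivity is a routine bootstrap from boundedness and hypothesis (1), since $\mathcal{H}[i] \subset \mathcal{D}$ is already identified with the essential image of $\rho$ on shifts of the heart.
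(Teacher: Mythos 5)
Your proposal is correct and follows essentially the same strategy as the paper: induction on the amplitude (length) with respect to the $t$-structure, peeling off the top cohomology via truncation triangles, proving full faithfulness first by the five lemma, and then obtaining essential surjectivity by lifting the connecting morphism (now available by full faithfulness), forming the cone, and invoking the morphism-of-triangles axiom together with the five lemma and Yoneda. The only cosmetic difference is that you phrase the fully-faithful step as a double induction reducing $M$ and $N$ one at a time, while the paper first handles the case where one of the two objects has strictly smaller length before treating two objects of the same length.
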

\begin{proof} We do the induction on the length of the object. Given an object $A$ in $\mathcal{D}$, there exist the minimal $a$ and maximal $b$ such that $A \in \mathcal{D}_{\geq a} \cap \mathcal{D}_{\leq b}$. Then we define the length of $A$ to be $b-a$. Firstly, we prove the following:
\

For any $A, B \in D^{b}(\mathcal{H})$ and $n \in \mathbb{Z}$, we have:
\begin{equation} \label{star}
Hom_{D^{b}(\mathcal{H})}(M, N[n]) \xrightarrow{\sim} Hom_{\mathcal{D}}(\rho(M), \rho(N)[n]). 
\end{equation}

By induction, we assume that, for any $A \in D^{b}(\mathcal{H})^{a,b}$, $B \in D^{b}(\mathcal{H})^{c,d}$ and $max\{b-a, d-c\} \leq m-1$, the above is true. Take any $A$ with length smaller than $m$, and $B$ with length $m = b - a$. There is a distinguished triangle:
$$\tau_{\geq a}\tau_{\leq b-1}B \rightarrow B \rightarrow \tau_{\geq b}\tau_{\leq b}B \rightarrow \tau_{\geq a}\tau_{\leq b-1}B[1] \rightarrow .$$
Then we have a long exact sequence:
$$Hom(A, \tau_{\geq a}\tau_{\leq b-1}B) \rightarrow Hom(A, B) \rightarrow Hom(A, \tau_{\geq b}\tau_{\leq b}B) $$
$$\rightarrow Hom(A, \tau_{\geq a}\tau_{\leq b-1}B[1]) \rightarrow \cdots.$$
Using functoriality to compare the above sequence with:
$$Hom(\rho(A), \rho(\tau_{\geq a}\tau_{\leq b-1}B)) \rightarrow Hom(\rho(A), \rho(B)) \rightarrow Hom(\rho(A), \rho(\tau_{\geq b}\tau_{\leq b}B)) $$
$$\rightarrow Hom(\rho(A), \rho(\tau_{\geq a}\tau_{\leq b-1}B[1])) \rightarrow \cdots.$$
We know (\ref{star}) is holding for $A, B$ by the five lemma and induction. Then we assume both $A$ and $B$ have length $m$. Using the similar method and induction again, we can know that (\ref{star}) is holding, i.e., $\rho$ is fully faithful.
\

Next we want to use induction to show that $\rho$ is essentially surjective. It is enough to show that, for any object $B \in \mathcal{D}$, there exists $A \in D^{b}(\mathcal{H})$ such that $\rho(A) \cong B$. Take any element $B \in \mathcal{D}$ with length $m$. Then we have:
$$\tau_{\geq a}\tau_{\leq b-1}B \rightarrow B \rightarrow \tau_{\geq b}\tau_{\leq b}B \rightarrow \tau_{\geq a}\tau_{\leq b-1}B[1] \rightarrow .$$
In other words, we have:
$$\tau_{\geq b}\tau_{\leq b}B[-1] \xrightarrow{f} \tau_{\geq a}\tau_{\leq b-1}B \rightarrow B \rightarrow \tau_{\geq b}\tau_{\leq b}B \rightarrow .$$
By assumption, we have $A_{1}$ and $A_{2} \in D^{b}(\mathcal{H})$ map to $\tau_{\geq b}\tau_{\leq b}B[-1]$ and $\tau_{\geq a}\tau_{\leq b-1}B$ respectively. By (\ref{star}), we know that there exists a map $g$ from $A_{1}$ to $A_{2}$, whose image under $\rho$ is just $f$. We take $A = cone(g)$. Then by the axiom of triangulated categories, there exists a map $\rho(A) \rightarrow B$. By the five lemma and Yoneda lemma, applying the functor of type $Hom(\tilde{B}, )$, where $\tilde{B} \in \mathcal{D}$, we know that $\rho(A) \cong B$. 
\end{proof}
\begin{thm}  \label{main thm for cdga over $\G$}
Let $A$ be a cohomologically connected cdga over $\G$. Then
\begin{itemize}
\item
There is a functor:
$$\rho: D^{b}(\mathcal{H}_{A}^{\G, f}) \longrightarrow \mathcal{D}^{\G, f}_{A}.$$
\item
The functor $\rho$ constructed above is an equivalence of triangulated categories if and only if $A$ is $1$-minimal.
\end{itemize}
\end{thm}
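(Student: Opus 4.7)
The plan is to apply Lemma \ref{cri for equi between tri} with $\mathcal{D} = \ml{D}^{\G, f}_A$, which carries the $t$-structure from Theorem \ref{existence of t-structure} with heart $\ml{H}^{\G, f}_A$. First one constructs $\rho$ via the standard Beilinson realization functor: a bounded complex $(M^0 \to \cdots \to M^k)$ of objects in the heart is sent to its total cone, built inductively as iterated cones on the differentials inside $\ml{D}^{\G, f}_A$. The restriction $\rho|_{\ml{H}^{\G, f}_A[i]}$ is tautologically an inclusion, and boundedness of $\ml{D}^{\G, f}_A$ follows from the finiteness of the sequential filtration of any finite cell $A$-module, which forces $qM$ to have bounded cohomology.

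The main work is verifying the Hom-Ext comparison
\begin{equation} \label{he}
Hom_{D^b(\ml{H}^{\G, f}_A)}(M, N[n]) \xrightarrow{\sim} Hom_{\ml{D}^{\G, f}_A}(\rho M, \rho N[n]), \qquad M, N \in \ml{H}^{\G, f}_A,
\end{equation}
which is the crux of both directions. For the \emph{if} direction, assume $A$ is $1$-minimal, so $A\{1\} \to A$ is a quasi-isomorphism. By Theorem \ref{1-minimal model}, $A \cong \wedge(\gamma_A[-1])$ is the Chevalley-Eilenberg cdga of the co-Lie algebra $\gamma_A$, and by Section \ref{heart} the heart identifies with $co\text{-}rep^{\G, f}(\gamma_A)$. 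Ext groups in the heart are then computed by the Chevalley-Eilenberg complex of $\gamma_A^\vee$ with coefficients in $Hom(M, N)$, which is precisely the complex $\ml{H}om_A(M, N)$ that computes the right-hand side of (\ref{he}) via cell resolutions. Matching both sides via this common complex yields (\ref{he}), and Lemma \ref{cri for equi between tri} delivers the equivalence.

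For the \emph{only if} direction, apply the just-proved \emph{if} direction to $A\{1\}$, which is tautologically $1$-minimal: this gives an equivalence $\rho_{A\{1\}}: D^b(\ml{H}^{\G, f}_{A\{1\}}) \to \ml{D}^{\G, f}_{A\{1\}}$. Since $\gamma_{A\{1\}} = \gamma_A$, the map $A\{1\} \to A$ induces an equivalence of hearts $\ml{H}^{\G, f}_{A\{1\}} \xrightarrow{\sim} \ml{H}^{\G, f}_A$; naturality of $\rho$ then forces the base change $(A\{1\} \to A)_*: \ml{D}^{\G, f}_{A\{1\}} \to \ml{D}^{\G, f}_A$ to itself be an equivalence. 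Comparing Hom groups of the unit objects yields $H^n(A\{1\} \otimes V)^\G \cong H^n(A \otimes V)^\G$, induced by $A\{1\} \to A$, for every rational $\G$-representation $V$ and every $n$; running $V$ over all irreducibles and using semisimplicity of $\G$-representations, we deduce that $A\{1\} \to A$ is a quasi-isomorphism, i.e., $A$ is $1$-minimal.

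The hard part of the argument is the Hom-Ext identification (\ref{he}) in the $1$-minimal case, namely matching the Chevalley-Eilenberg/cobar complex computing Ext in the heart with the $\ml{H}om_A$ complex computing derived Hom; once this Koszul-duality-type matching is in place, Lemma \ref{cri for equi between tri} takes care of the rest.
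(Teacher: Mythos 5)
Your overall strategy matches the paper's: construct $\rho$, apply Lemma \ref{cri for equi between tri}, and reduce the key comparison to computing $\mathrm{Ext}$-groups in the heart via the co-Lie algebra $\gamma_A$ and matching them with cohomology of $\wedge^*(\gamma_A[-1])$. The \emph{if} direction in particular follows the paper's chain of identifications essentially verbatim, resting on the rational-homotopy computation $\mathrm{Ext}^n_\gamma(\mb{Q}, V) \cong H^n(\wedge^*(\gamma[-1]) \otimes V)\langle 0\rangle$. Two points of divergence deserve comment.

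First, your construction of $\rho$ as ``iterated cones on the differentials inside $\ml{D}^{\G,f}_A$'' is not a well-defined triangulated functor as stated: cones are non-functorial in a bare triangulated category, and the Beilinson realization functor requires an f-structure or filtered enhancement that you neither invoke nor supply. The paper sidesteps this entirely by constructing $\rho$ on explicit representatives: a bounded complex $\{M^n, \delta^n\}$ in $\mathcal{H}^{\G,f}_A$ with each $M^n$ a minimal cell module given by generating data $\{V_{j^n},\phi_{j^n}\}$ and connection matrix $\Gamma^n$ is sent to the cell module with generating data $\{V_{j^n},\phi_{j^n}[n]\}$ and differential $\Gamma^n[n] + \delta^n[n]$; functoriality and well-definedness on quasi-isomorphism classes are then concrete checks. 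You should give this construction (or equivalently appeal to the $\mathrm{Conn}^{\G,f}_A$ model) rather than gesture at the total cone.

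Second, your \emph{only if} direction takes a genuinely different route than the paper. You apply the \emph{if} direction to $A\{1\}$, invoke $\gamma_{A\{1\}} = \gamma_A$ to get an equivalence of hearts, and deduce from naturality of $\rho$ that base change along $A\{1\} \to A$ is an equivalence, then compare Homs of the unit objects. The paper argues more directly: from $\rho$ being an equivalence it immediately reads off $H^n(A\otimes V)\langle 0\rangle \cong \mathrm{Ext}^n_\gamma(\mb{Q},V) \cong H^n(\wedge^*(\gamma_A[-1])\otimes V)\langle 0\rangle$ and concludes $A \to \wedge^*(\gamma_A[-1])$ is a quasi-isomorphism by running over all irreducible $V$. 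Your route is correct but requires verifying (i) the square $\rho_{A\{1\}} / \rho_A$ commutes with the heart equivalence and the base change $(A\{1\}\to A)_*$, which depends on the explicit form of $\rho$ you did not fully spell out, and (ii) the fact $\gamma_{A\{1\}} = \gamma_A$, which is true (by Theorem \ref{1-minimal model} applied to both $A$ and $A\{1\}$) but is a nontrivial point that needs to be argued, not just asserted. The paper's direct computation avoids both; it is the shorter path once the \emph{if}-direction computation is in hand.
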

\begin{proof} We first need to construct a functor $\rho$. Consider a bounded complex
$$M^{*} = \{ M^{n}, \delta^{n}: M^{n} \rightarrow M^{n+1}\}$$
in $\mathcal{H}_{A}^{\G, f}$. Assume that each $M^{n}$ is minimal. Furthermore, we assume that it is given by the generating data $\{ V_{j^{n}}, \phi_{j^{n}} \}_{j^{n} \in J^{n}}$ and the connection matrix $\Gamma^{n}$. Then we define $\rho M^{*}$ with its generating data $\{ V_{j^{n}}, \phi_{j^{n}}[n] \}_{j^{n} \in J^{n}}$ and its differential given by:
$$d |_{\phi_{j^{n}}[n](V_{j^{n}})}  = \Gamma^{n}[n] + \delta^{n}[n].$$
If $f^{*}: M^{*} \rightarrow N^{*}$ is a quasi-isomorphism of chain complexes, then $\rho(f^{*})$ is a quasi-isomorphism of $A$-modules.  
\

Let's prove the second statement now. We assume that $A$ is $1$-minimal, i.e., $A \cong \wedge^{*} (\gamma[-1])$, where $\gamma$ is the co-Lie algebra consisted by the indecomposable elements of $H^{0}(\bar{B}^{\G}(A))$.

In order to apply the above result to our case ($\mathcal{D} = \mathcal{D}^{\G, f}_{A}$ and $\mathcal{H} = \mathcal{H}_{A}^{\G, f}$), we need to check the conditions in Lemma  \ref{cri for equi between tri}. The first and second condition are automatic. Let's check the third condition.
\

Notice that $\mathcal{H}_{A}^{\G, f}$ can be identified with the category of co-representations of $\gamma$ in the category of $\G$-representations. In fact, given a finite dimensional co-representation $V$, we can associate it with a cell module $A \otimes V$.
\

We recall the following basic facts. (Lemma 23.1, Example 1(p.315) and p.319, 320 in\cite{FHT}.) 
Given a differential graded Lie algebra $L$, we have:
$$Ext^{n}_{L}(\mathbb{Q}, \mathbb{Q}) \cong Ext^{n}_{UL}(\mathbb{Q}, \mathbb{Q}) \cong H^{n}((\wedge^{*}(L[-1]))^{\vee}),$$
and
$$Ext^{n}_{L}(\mathbb{Q}, V) \cong Ext^{n}_{UL}(\mathbb{Q}, V) \cong H^{n}((\wedge^{*}(L[-1]))^{\vee} \otimes V),$$
where $\vee$ means taking the dual, $UL$ is the universal enveloping Lie algbera of $L$ and $V$ is any $L$-module.  $L[-1]_{k} = L_{k-1}$.
\

Applying to the co-Lie algbebra $\gamma$, we get:
$$Ext^{n}_{\gamma}(\mathbb{Q}, \mathbb{Q}) \cong H^{n}(\wedge^{*}(\gamma[-1])).$$
In fact, the proof of this isomorphism can be extended to the following case.
\

Given a co-Lie algebra $\gamma$ over $\G$ and a $\gamma$ co-representation $V$, we have:
$$Ext^{n}_{\gamma}(\mathbb{Q}, V) \cong H^{n}(\wedge^{*}(\gamma[-1]) \otimes V)\law 0 \raw.$$  
Notice that the left hand side computes the extension groups in the category of $\gamma$-representations. We have:
 $$H^{n}(\wedge^{*}(\gamma[-1]) \otimes V)\law 0 \raw = Hom_{\G}(\mb{Q}, H^{n}(\wedge^{*}(\gamma[-1]) \otimes V)).$$ 
Therefore we get:
\begin{equation} \nonumber
\begin{split}
& Hom_{D^{b}(\mathcal{H}_{A}^{\G, f})}(\wedge^{*}(\gamma[-1]) \otimes V, \wedge^{*}\gamma[-1] \otimes W[n])  \\
\cong & Ext^{n}_{\mathcal{H}_{A}^{\G, f}}(\wedge^{*}(\gamma[-1]) \otimes V, \wedge^{*}(\gamma[-1]) \otimes W) \\
\cong &  Ext^{n}_{\gamma}(\mathbb{Q}, V^{\vee} \otimes W) \\
\cong & H^{n}(\wedge^{*}(\gamma[-1]) \otimes V^{\vee} \otimes W)\law 0 \raw \\
\cong & H^{0}(\wedge^{*}(\gamma[-1]) \otimes V^{\vee} \otimes W[n])\law 0 \raw \\
\cong & Hom_{\mathcal{D}^{\G, f}_{A}}(\mathbb{Q}, \wedge^{*}(\gamma[-1]) \otimes V^{\vee} \otimes W[n]) \\
\cong & Hom_{\mathcal{D}^{\G, f}_{A}}(\wedge^{*}(\gamma[-1]) \otimes V, \wedge^{*}(\gamma[-1]) \otimes W[n]).
\end{split}
\end{equation}
It's easy to check that the composition of these isomorphisms is given by $\rho: D^{b}(\mathcal{H}_{A}^{\G, f}) \longrightarrow \mathcal{D}^{\G, f}_{A}.$
\

Conversely, we assume that $\rho$ is an equivalence. Without loss of generality, we assume that $A$ is generalized nilpotent. The above computation tell us that $H^{n}(A \otimes V)\law 0 \raw \cong Ext^{n}_{\gamma}(\mathbb{Q}, V)$, where $\gamma$ is the co-Lie algebra consisting of the indecomposable elements in $H^{0}(\bar{B}(A))$. Let us consider the map $A \rightarrow \wedge^{*}(\gamma[-1])$,  Applying the functor $Hom_{\G}(V[n], \cdot)$ for any $n \in \mathbb{Z}$ and any $\G$-representation $V$, we get:
$$Hom_{\G}(V[n], A) \cong H^{n}(A \otimes V)\law 0 \raw \cong Ext^{n}_{\gamma}(\mathbb{Q}, V) \cong Hom_{\G}(V[n], \wedge^{*}(\gamma[-1])).$$
This implies that, under the level of the $\G$-representations, the above map is a quasi-isomorphism. Therefore $A$ is $1$-minimal.
\end{proof}
\begin{cor} \label{Main structure for coh connected cdga}
Let $A$ be a cohomologically connected cdga over $\G$. Then
\begin{itemize}
\item
There is a functor:
$$\rho: D^{b}(co-rep^{\G, f}_{\mathbb{Q}}(\chi_{A})) \longrightarrow \mathcal{D}^{\G, f}_{A}.$$
Furthermore, $\rho$ induces a functor on the hearts:
$$\mathcal{H}(\rho): co-rep^{\G, f}(\chi_{A}) \rightarrow \mathcal{H}^{\G, f}_{A},$$
which is an equivalence of Tannakian categories.
\item
The functor $\rho$ is an equivalence of triangulated categories if and only if $A$ is $1$-minimal.
\end{itemize}
\end{cor}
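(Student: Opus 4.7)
The plan is to deduce this corollary from Theorem~\ref{main thm for cdga over $\G$} together with the identifications assembled in Section~\ref{heart}. Theorem~\ref{main thm for cdga over $\G$} already supplies a triangulated functor $\rho_{0}\colon D^{b}(\mathcal{H}_{A}^{\G,f})\to\mathcal{D}_{A}^{\G,f}$, along with the characterization of when it is an equivalence, so the remaining content is to produce an equivalence of Tannakian categories $\Phi\colon co-rep^{\G,f}(\chi_{A})\xrightarrow{\sim}\mathcal{H}_{A}^{\G,f}$; the desired functor $\rho$ will then be $\rho_{0}\circ D^{b}(\Phi)$, and the second bullet follows at once.

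To build $\Phi$, I would first reduce to the connected case. By Lemma~\ref{Bar preserve under quasi-iso} the Hopf algebra $\chi_{A}$ is a quasi-isomorphism invariant of $A$, and by Proposition~\ref{preserve under qi} the category $\mathcal{D}_{A}^{\G,f}$, its $t$-structure, and hence its heart are also quasi-isomorphism invariants. Replacing $A$ by an $\infty$-minimal model, I may therefore assume $A$ is generalized nilpotent, and in particular connected. Section~\ref{heart} then gives an equivalence of Tannakian categories
\[
\mathcal{H}_{A}^{\G,f}\ \cong\ Conn^{\G,f}_{A}\ \cong\ co-rep^{\G,f}(\gamma_{A}),
\]
where $\gamma_{A}=I_{\chi_{A}}/(I_{\chi_{A}})^{2}$ is the co-Lie algebra of Definition~\ref{gamma}, so it suffices to identify $co-rep^{\G,f}(\chi_{A})$ with $co-rep^{\G,f}(\gamma_{A})$.

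For this identification, $\chi_{A}$ is a commutative Hopf algebra object in $\mathrm{Rep}_{\G}$ whose augmentation ideal $I_{\chi_{A}}$ is concentrated in strictly positive Adams degrees, a consequence of connectedness of $A$ together with the explicit form of $\bar{B}^{\G}(A)$. Therefore the affine group scheme $\mathrm{Spec}(\chi_{A})$, with its natural $\G$-action coming from Convention~\ref{conv}, is pro-unipotent relative to the Adams weight filtration, and its Lie algebra as a $\G$-representation is precisely $\gamma_{A}^{\vee}$. For such a pro-unipotent group, the infinitesimal-action functor is a $\G$-equivariant equivalence from finite-dimensional rational representations to finite-dimensional Lie algebra modules; dualizing, this yields the sought equivalence $\Phi$.

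Composing $\Phi$ with $\rho_{0}$ produces $\rho$, whose restriction to hearts is $\Phi$ and is therefore an equivalence of Tannakian categories, establishing the first bullet; the second bullet is inherited verbatim from Theorem~\ref{main thm for cdga over $\G$}. The main technical obstacle I anticipate is justifying the pro-unipotence of $\mathrm{Spec}(\chi_{A})$ and the resulting $\G$-equivariant equivalence between its representations and $\gamma_{A}$-co-representations, since the standard arguments over a point must be adapted to the equivariant setting by filtering along the Adams grading rather than the cohomological grading.
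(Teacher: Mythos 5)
Your proposal is correct and takes the route the paper implies (the corollary is stated without an explicit proof, as a direct consequence of Theorem~\ref{main thm for cdga over $\G$} and the identifications of Section~\ref{heart}). You correctly isolate the only genuinely new step, namely the $\G$-equivariant equivalence $co\text{-}rep^{\G,f}(\chi_A)\simeq co\text{-}rep^{\G,f}(\gamma_A)$ via the pro-unipotence of $\mathrm{Spec}(\chi_A)$; this is the standard exponential correspondence for pro-unipotent affine group schemes carried along the Adams filtration in $\mathrm{Rep}_\G$, and leaving it at the level you did matches the paper's own level of detail.
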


\section{Relation with the weighted completion} \label{relation}
Suppose that $\Gamma$ is an abstract group, that $G$ as before and that $\rho: \Gamma \to G(\mb{Q})$ is a homomorphism with Zariski dense image. We recall the definition of the weighted completion of $\Gamma$ relative to $\rho$ and a central cocharacter $\omega: \mb{G}_m \to R$ in \cite{HM1}. 
\begin{df}
A weighted $\Gamma$-module with respect to $\rho$ and $w$ is a finite dimensional $\mb{Q}$ vector space with $\Gamma$-action together with a weight filtration
$$\cdots \subset W_{m-1}M \subset W_{m}M \subset W_{m+1}M \subset \cdots$$
by $\Gamma$ invariant subspaces. And satisfy:
\begin{itemize}
\item
the intersection of the $W_{m}M$ is $0$ and their union is $M$,
\item
for each $m$, the representation $\Gamma \to Aut\ Gr^{W}_{m}M$ should factor through $\rho$ and a homomorphism $\phi_{m}: G \rightarrow Aut\ Gr^{W}_{m}M$,
\item
$Gr^{W}_{m}M$ has weight $m$ viewed as a $\mb{G}_m$-module via
$$\mb{G}_m \xrightarrow{w} G \xrightarrow{\phi_{m}} Aut \ Gr^{W}_{m}M. $$
\end{itemize}
\end{df}
\begin{rmk}
Weighted $\Gamma$-modules together with the $\Gamma$-equivariant morphisms between weighted $\Gamma$-modules form the category of weighted $\Gamma$-modules, denoted by $W-Mod^G(\Gamma)$. Notice that the category of weighted $\Gamma$-modules is Tannakian.
\end{rmk}
\begin{df} \label{df of weighted completion}
The weighted completion of $\Gamma$ with respect to $\rho$ and $w$ is the tannakian fundamental group of the category of weighted $\Gamma$-modules with respect to $\rho$ and $w$. We denote it by $\ml{G}$ for simplicity. Then we have a short exact sequence:
$$1 \to \ml{U} \to \ml{G} \to G \to 1,$$
where $\ml{U}$ is the prounipotent radical of $\ml{G}$. We denote the Lie algebra of $\ml{U}$ by $\mf{u}$.
\end{df}
Notice that, given $V$ any rational $G$-representation, we can view $V$ as a right $\Gamma$-module by $\rho$. In the following, we assume that $H^i(\Gamma, V)$ is finite dimensional for any finite dimensional rational $G$-representation and $i \in \mb{Z}_{\geq 0}$.
Let $E^{\bullet}$ be any cohomologically connected cdga over $G$. Applying Theorem \ref{existence of t-structure}, we get the heart $\ml{H}^{G, f}_{E^{\bullet}}$ of $\ml{D}^{G, f}_{E^{\bullet}}$. Because we are interested in the abelian category only, we may also assume that $E^{\bullet}$ is $1$-minimal. There is a canonical functor $\Phi$ from $\ml{H}^{G, f}_{E^{\bullet}}$ to $W-Mod^G(\Gamma)$ by sending $M \in \ml{H}^{G, f}_{E^{\bullet}}$ to itself. We denote the forgetful functor from $\ml{H}^{G, f}_{E^{\bullet}}$ (resp. $W-Mod^G(\Gamma)$) to the category of $\Gamma$ modules by $F_H$ (resp. $F_W$). Notice that the composition of $\Phi$ with $F_W$ is $F_H$.
\begin{rmk}
There is another equivalent definition of the weighted completion by universal properties. See Section 7 in \cite{HM1} for example. The above canonical functor is just induced by the canonical map $\ml{G} \to \pi_1(\ml{H}^{G, f}_{E^{\bullet}}, F)$, where $\pi_1(\ml{H}^{G, f}_{E^{\bullet}}, F)$ is the tannakian fundamental group of $\ml{H}^{G, f}_{E^{\bullet}}$ and $F$ is the forgetful functor to the category of $\mb{Q}$-vector spaces.
\end{rmk}
\textbf{Assumption:} For $i \in \mb{Z}_{> 0}$, there exists a map:
\begin{equation} \label{extension1}
\varphi_i: \bigoplus_{\alpha} H^{i}(\Gamma, (V_{\alpha})^{\vee}) \otimes V_{\alpha}  \to H^{i}(E^{\bullet}),
\end{equation}
where the index set runs through all irreducible representations with non-positive weights. Furthermore, we require that $\{ \varphi_i \}_{i \in \mb{Z}_{\geq 0}}$ is multiplicative, in other words,
the multiplicative  structure:
$$H^{i}(\Gamma, (V_{\alpha})^{\vee}) \otimes H^{j}(\Gamma, (V_{\beta})^{\vee}) \to H^{i+j}(\Gamma, (V_{\alpha} \otimes V_{\beta})^{\vee})$$
is compatible with the multiplicative structure on $H^{*}(E^{\bullet})$ induced by the multiplication of $E^{\bullet}$. 
\

In the next step, we want to use maps (\ref{extension1}) to construct a functor $\Psi: W-Mod^G(\Gamma) \to \ml{H}^{G, f}_{E^{\bullet}}$. In fact, after rewriting the maps  (\ref{extension1}) and using $1$-minimal property of $E^{\bullet}$, we get the following maps:
$$Ext^i_{\mb{Q}[\Gamma]-Mod}(\mb{Q}, (V_{\alpha})^{\vee}) \xrightarrow{\varphi} Ext^i_{\ml{H}^{G, f}_{E^{\bullet}}}(E^{\bullet}, E^{\bullet} \otimes (V_{\alpha})^{\vee}).$$
Furthermore, we have maps:
\begin{equation} \label{extension2}
Ext^i_{\mb{Q}[\Gamma]-Mod}(V_1, V_2) \to Ext^i_{\ml{H}^{G, f}_{E^{\bullet}}}(E^{\bullet} \otimes V_1, E^{\bullet} \otimes V_2),
\end{equation}
for any rational representations $V_1, V_2$ over $G$ such that $V_1 \otimes (V_2)^{\vee}$ is a representation with non-positive weight.

\begin{df}
For any $M \in W-Mod^G(\Gamma)$, we let $l_M$ be the integer such that $W_{l_M}M \neq 0$ and $W_{i}M = 0$ for $i < l_M$ and we let $u_M$ be the least integer such that $W_{u_m} M = M$. Then we define the length of $M$ to be $u_M - l_M + 1$ and denote it by $l(M)$.  
\end{df}

If $M \in W-Mod^G(\Gamma)$ has length one,  then $M$ is just a rational $G$-representation with a pure weight. We define $\Psi(M) = E^{\bullet} \otimes M$. In general, if $M \in W-Mod^G(\Gamma)$ has length bigger than one, then $M$ can be considered as an element $\xi$ in $Ext^{n-1}_{\mb{Q}[\Gamma]-Mod}(Gr^W_{u_M}M, Gr^W_{l_M}M)$. Then we define $\Psi(M)$ to be the element in $\ml{H}^{G, f}_{E^{\bullet}}$ corresponding to the image of $\xi$ under maps (\ref{extension2}), which is a $(n-1)$-step Yoneda extension of $E^{\bullet} \otimes Gr^W_{l_M}M$ by $E^{\bullet} \otimes Gr^W_{u_M}M$. 

\begin{thm}
Let $E^{\bullet}$ be a cdga over $G$ which satisfies the above assumption. In maps (\ref{extension1}), we assume further that $\varphi_1$ is an $G$-equivariant isomorphism and $\varphi_2$ is an $G$-equivariant injection. Then $\Psi$ is an equivalence between Tannakian categories. 
\end{thm}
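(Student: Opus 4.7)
The plan is to reduce, via Tannakian duality and Koszul considerations, to showing that a comparison map of $1$-minimal cdgas over $G$ is a quasi-isomorphism. Both $W-Mod^G(\Gamma)$ and $\ml{H}^{G,f}_{E^{\bullet}}$ are neutral Tannakian categories. By Corollary \ref{Main structure for coh connected cdga} we may replace $E^{\bullet}$ by its $1$-minimal model, so that $E^{\bullet} \cong \wedge^*(\gamma_{E^{\bullet}}[-1])$. Let $\mf{u}$ be the Lie algebra of the prounipotent radical $\ml{U}$ of the weighted completion $\ml{G}$ and let $CE(\mf{u}) = \wedge^*(\mf{u}^{\vee}[-1])$ be its Chevalley--Eilenberg cdga, a $1$-minimal cdga over $G$. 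By Tannakian duality applied to $\Psi$, combined with the Koszul-type identification of $1$-minimal cdgas over $G$ with their underlying pro-Lie algebras as in Theorem \ref{1-minimal model}, the functor $\Psi$ corresponds to a $G$-equivariant cdga morphism $f: CE(\mf{u}) \to E^{\bullet}$. Showing that $\Psi$ is an equivalence of Tannakian categories is thus equivalent to showing that $f$ is a quasi-isomorphism.

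The cdga morphism $f$ can be produced from $\{\varphi_i\}$ directly: by standard obstruction theory for $1$-minimal models, a multiplicative $G$-equivariant system of maps on cohomology lifts to a cdga morphism realizing them on $H^*$. The cohomological comparison is then performed using the Hain--Matsumoto computations in \cite{HM1}: there is a natural $G$-equivariant isomorphism $H^1(\mf{u}, \mb{Q}) \cong \bigoplus_{\alpha} H^1(\Gamma, V_\alpha^{\vee}) \otimes V_\alpha$ (sum over irreducibles of non-positive weight) and a natural $G$-equivariant injection $H^2(\mf{u}, \mb{Q}) \hookrightarrow \bigoplus_{\alpha} H^2(\Gamma, V_\alpha^{\vee}) \otimes V_\alpha$. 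Under these identifications, the hypothesis that $\varphi_1$ is a $G$-equivariant isomorphism and $\varphi_2$ a $G$-equivariant injection translates directly into $H^1(f)$ being an isomorphism and $H^2(f)$ being an injection. Since both source and target of $f$ are $1$-minimal, this forces $f$ to be a quasi-isomorphism (by the defining property of $n$-minimal models in Definition \ref{def of 1-min}), hence $\Psi$ is an equivalence of Tannakian categories.

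The main technical obstacle is verifying that the Tannakian-duality-induced map $f$ genuinely induces $\varphi_1$ and $\varphi_2$ on cohomology under the Hain--Matsumoto identifications, rather than some a priori unrelated comparison. This is a naturality check tracing through both constructions: on one side $\Psi$ is built from $\{\varphi_i\}$ by definition, sending Yoneda-length-$n$ classes in $W-Mod^G(\Gamma)$ to their images under $\varphi$ via maps (\ref{extension2}); on the other side, $f$ is (up to homotopy) determined by its $H^1$ part by rigidity of $1$-minimal models, and $H^2(f)$ is then forced by the cup-product structure. Matching these two descriptions reduces to the formal compatibility encoded in the multiplicativity condition on $\{\varphi_i\}$, which is exactly what allows passage from a cohomological datum to a cdga-level morphism and ensures that $f$ extends the given $\varphi$ at the level of $H^1$ and $H^2$.
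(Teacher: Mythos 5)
Your argument is, in substance, the Koszul-dual formulation of the paper's own proof. The paper identifies both hearts with categories of representations of pronilpotent Lie algebras in $Rep(G)$, extracts from $\Psi$ a Lie algebra morphism $\varphi: (\gamma_{E^{\bullet}})^{\vee} \to \mf{u}$, translates the hypotheses on $\varphi_1,\varphi_2$ via Hain--Matsumoto into $\varphi^*$ being an isomorphism on $H^1$ and an injection on $H^2$, and then concludes via Proposition 2.1 of \cite{H2} that $\varphi$ is an isomorphism. You carry out the same argument on the cdga side, with $f: CE(\mf{u}) \to E^{\bullet}$ playing the role of the dual of $\varphi$; the cohomological inputs are the same.

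There is, however, a gap in your final step. You justify the implication ``$f$ induces an isomorphism on $H^1$ and an injection on $H^2$, both $CE(\mf{u})$ and $E^{\bullet}$ are $1$-minimal, therefore $f$ is a quasi-isomorphism'' by citing ``the defining property of $n$-minimal models in Definition \ref{def of 1-min}''. That definition only records the meaning of a $1$-minimal model $s: A\{1\}\to A$; it does not assert that any map between two already-$1$-minimal cdgas that satisfies those two cohomological conditions is itself a quasi-isomorphism. On its face, your hypotheses show only that $f$ exhibits $CE(\mf{u})$ as \emph{a} $1$-minimal model of $E^{\bullet}$; one still needs a rigidity result to conclude that $f$ is a quasi-isomorphism (indeed an isomorphism) of cdgas. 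This is exactly the Stallings-type criterion for pronilpotent Lie algebras that the paper invokes as Proposition 2.1 in \cite{H2}, and it is a substantive, non-definitional input. The same issue is implicitly present in your earlier reduction ``$\Psi$ is an equivalence iff $f$ is a quasi-isomorphism'': for general cdgas a quasi-isomorphism need not induce an equivalence of corepresentation categories, and the passage from quasi-isomorphism to isomorphism of the underlying co-Lie algebras again rests on the same rigidity of $1$-minimal models. Replacing the citation of Definition \ref{def of 1-min} by the Lie-algebra criterion (Proposition 2.1 of \cite{H2}, or its cdga dual) closes the gap, after which your argument agrees with the paper's.
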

\begin{proof}
Recall in section \ref{heart}, we could identify $\ml{H}^{G, f}_{E^{\bullet}}$ with the category of $(\gamma_{E^{\bullet}})^{\vee}$-representations in the category of rational $G$-representations. Here $(\gamma_{E^{\bullet}})^{\vee}$ is the dual of $\gamma_{E^{\bullet}}$ (Definition \ref{gamma}), i.e., a Lie algebra in the category of rational $G$-representations. Similarly, $W-Mod^G(\Gamma)$ can be identified with  the category of representations over $\mf{u}$ in the category of rational $G$-representations. Then $\Psi$ induces a $G$-equivariant morphism between pronilpotent Lie algebras $\varphi: (\gamma_{E^{\bullet}})^{\vee} \to \mf{u}$.

The requirement of $\varphi_1, \varphi_2$ will imply that: there exists a $G$-equivariant isomorphism:
\begin{equation}
\bigoplus_{\alpha} H^1(\Gamma, V_{\alpha}) \otimes (V_{\alpha})^{\vee} \cong H^{1}((\gamma_{E^{\bullet}})^{\vee}) .
\end{equation}
and a $G$-equivariant injection:
\begin{equation}
\bigoplus_{\alpha} H^2(\Gamma, V_{\alpha}) \otimes (V_{\alpha})^{\vee} \hookrightarrow H^{2}((\gamma_{E^{\bullet}})^{\vee}) .
\end{equation}
where the index set $\alpha$ runs all the irreducible $G$- representations with positive Adams weights.
\

Using Corollary 8.2. in \cite{HM1}, we get that:
\begin{enumerate}
\item
There is a natural $G$-equivariant isomorphism:
$$\varphi^*: H^{1}(\mf{u}) \xrightarrow{\cong} H^{1}((\gamma_{E^{\bullet}})^{\vee}).$$
\item
There is a natural $G$-equivariant injection:
$$\varphi^*: H^{2}(\mf{u}) \hookrightarrow H^{2}((\gamma_{E^{\bullet}})^{\vee}).$$
\end{enumerate}
Then by Proposition 2.1. in \cite{H2}, we know that $\varphi: (\gamma_{E^{\bullet}})^{\vee} \to \mf{u}$ is an isomorphism of Lie algebras in the category of $G$-represenations. Therefore, the functor $\Psi$ from $\ml{H}^{G, f}_{E^{\bullet}}$ to $W-Mod^G(\Gamma)$ is an equivalence.
\end{proof}

\section{Split Tannakian categories over a reductive group} \label{sp tan}
In this section, we describe a special kind of Tannakian categories. Let $R$ be any reductive group over $\mb{Q}$.
\begin{df}
We say that $C$ is a neutral Tannakian category over $R$ if $C$ is a neutral Tannakian category over $\mb{Q}$ and there exists an exact faithful $\mb{Q}$-linear tensor functor $\widetilde{\omega}: C \to Rep(R)$, whose composition with the forgetful functor $F: Rep(R) \to Vec_{\mb{Q}}$ is the fiber functor $\omega: C \to Vec_\mb{Q}$.
\end{df}
\begin{exe} \label{example of main}
Assume that $R$ satisfies Convection \ref{conv} and that $A$ is a cohomologically connected cdga over $R$. From Theorem \ref{existence of t-structure}, we know the existence of the heart $\ml{H}^{R, f}_A$ of $\ml{D}^{R, f}_A$, which is a neutral Tannakian category over $R$.
\end{exe}
\begin{exe} \label{example of stc}
Another typical example is the relative completion $\ml{G}$ with respect a given map from a finite generated group $\Gamma \xrightarrow{\rho} R(\mb{Q})$. For the definition of the relative completion, we refer to \cite{H}. Roughly speaking, the relative completion is just the weighted completion without considering the weight. 
In \cite{H2}, Hain shows that the relative completion $\ml{G}$ can be expressed as a semi-product of a prounipotent algebraic group and $R$. This implies that there exists a functor from the category of finite dimensional representations over $\ml{G}$, which is a Tannakian category, to the category of rational representations over $R$. It's easy to check this functor is an exact faithful tensor functor. 
\end{exe}
Motivated by the above examples, we define:
\begin{df} \label{split tan}
A neutral Tannakian category $C$ over $R$ is split if the full subcategory of $C$ consisting of semi-simple objects is isomorphic to $Rep(R)$.
\end{df} 
\begin{rmk} \label{tann fun}
The Tannakian fundamental group of a neutral Tannakian category $C$ with a tensor generator is isomorphic to a linear proalgebraic group. See Proposition 2.20 in \cite{DM}. If we assume further that this Tannakian category is split over $G$, then its Tannakian fundamental group will be the form $U \rtimes R$, where $U$ is a prounipotent algebraic group. Example \ref{example of main} and Example \ref{example of stc} satisfy these conditions.
\end{rmk}
In the end, we want to use the method of framed objects (Section 6 of \cite{BK} for example) to give a description of the coordinate ring of the Tannakian fundamental group of any split neautral Tannkian category $C$ with a tensor generator. As explained in Remark \ref{tann fun}, it's enough to determine the coordinate ring of the prounipotent radical as a Hopf algebra object in $Rep(R)$.
\begin{df}
A framed object in $C$ is an object $X$ in $C$ together with an element $u \in Hom_{R}(V, X)$ (called the frame vector) and an element of $v \in Hom_{R}(X , \mb{Q})$ (called the frame covector), where $V$ is an irreducible $G$-representation.  We denote this object by $(X, u, v)$.
\end{df}
\begin{df}
Two framed objects $X, Y \in C$ are identified if there is a mapping $X \to Y$ compatible with the framing.
\end{df}
Notice that such pairs $X, Y$ define a relation $\ml{R}$ on the set of all framed objects. Then $\chi_C$ is defined as the set of equivalence classes of the smallest equivalence relation containing $\ml{R}$. By definition, $\chi_C$ is graded over the irreducible rational representations of $G$. For a given rational $R$- representation $V$, we denote the $V$ graded piece of $\chi_C$ by $\chi_C(V)$.
\begin{cl} \label{claim}
$\chi_C$ is a Hopf algebra in $Rep(R)$.
\end{cl}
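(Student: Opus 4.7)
The plan is to define the Hopf algebra operations on $\chi_C$ via natural constructions on framed objects, and then verify the axioms by comparing with the coordinate ring $\ml{O}(U)$ of the prounipotent radical $U$ of $\pi_1(C) \cong U \rtimes R$ (from Remark \ref{tann fun}). As an $R$-representation, $\chi_C = \bigoplus_V \chi_C(V)$ is graded by isomorphism classes of irreducible $R$-representations, with each $\chi_C(V)$ being the $V$-isotypic component.

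First I would define the multiplication by the tensor product of framed objects:
\[
[X_1, u_1, v_1] \cdot [X_2, u_2, v_2] = [X_1 \otimes X_2,\, u_1 \otimes u_2,\, v_1 \otimes v_2].
\]
Since $u_1 \otimes u_2 : V_1 \otimes V_2 \to X_1 \otimes X_2$ has reducible source, the Clebsch-Gordan decomposition $V_1 \otimes V_2 \cong \bigoplus_W \mathrm{Hom}_R(W, V_1 \otimes V_2) \otimes W$ (valid by semi-simplicity of $R$) redistributes the product into the $R$-isotypic pieces $\chi_C(W)$. The unit is the class of $(\mb{Q}, \mathrm{id}, \mathrm{id}) \in \chi_C(\mb{Q})$, the counit is $[X, u, v] \mapsto v \circ u \in \mathrm{Hom}_R(V, \mb{Q})$ (nonzero only when $V$ is trivial), and the antipode arises from the rigid duality of $C$ by dualizing the framed triple and re-indexing via $V \mapsto V^\vee$. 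Well-definedness on equivalence classes follows immediately, since any morphism $f : X_1 \to X_1'$ compatible with framings induces $f \otimes \mathrm{id}$, $f^\vee$, etc., compatible with the transported framings.

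For the comultiplication, given $[X, u, v] \in \chi_C(V)$, I would choose, for each irreducible $W$, dual bases $\{\iota_\alpha\} \subset \mathrm{Hom}_R(W, X)$ and $\{\pi_\alpha\} \subset \mathrm{Hom}_R(X, W)$ under the trace pairing $\mathrm{Hom}_R(W, X) \otimes \mathrm{Hom}_R(X, W) \to \mathrm{End}_R(W) = \mb{Q}$. The coproduct has the schematic form
\[
\Delta[X, u, v] = \sum_W \sum_\alpha [X, u, \pi_\alpha] \otimes_W [X, \iota_\alpha, v],
\]
where $\otimes_W$ contracts the intermediate $W$-component via the canonical evaluation $W^\vee \otimes W \to \mb{Q}$ and redistributes into the graded pieces of $\chi_C \otimes \chi_C$ by Clebsch-Gordan. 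The first factor is effectively a $W$-valued framed object whose pairing with a vector in $W^\vee$ gives an honest element of $\chi_C(V)$, while the second factor is an honest element of $\chi_C(W)$.

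The hard part will be verifying that $\Delta$ is independent of the basis choices, coassociative, and compatible with $m$. The cleanest route is to construct an $R$-equivariant map $\Phi : \chi_C \to \ml{O}(U)$ via matrix coefficients: $[X, u, v] \mapsto (\xi \mapsto (g \mapsto v(g \cdot u(\xi))))$ for $g \in U$ and $\xi \in V$. A direct check shows $\Phi$ descends to equivalence classes and is $R$-equivariant, and it restricts to an isomorphism $\chi_C(V) \xrightarrow{\sim} V \otimes \mathrm{Hom}_R(V, \ml{O}(U))$ on each graded piece by Tannakian reconstruction (every matrix coefficient of a $\pi_1(C)$-representation arises from a framed object in $C$). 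Under $\Phi$ the operations defined above correspond to the canonical Hopf algebra structure on $\ml{O}(U)$ --- multiplication from the group law of $U$ and comultiplication dual to it --- so the Hopf axioms and $R$-equivariance transport automatically. The principal technical obstacle is matching the coproduct formula above with the Hopf comultiplication on $\ml{O}(U)$ under $\Phi$, which requires careful basis-tracking through the Clebsch-Gordan decomposition.
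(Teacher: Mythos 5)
Your definitions of the operations --- multiplication by tensor product of framed objects redistributed by Clebsch--Gordan, comultiplication by contracting through dual bases over irreducibles, unit, counit, antipode --- match the paper's in spirit, though you omit the Baer sum, which is what makes each graded piece $\chi_C(V_\alpha)$ a $\mathbb{Q}$-vector space in the first place and underlies the additive structure. The paper also organizes the bookkeeping via the $(R,R)$-bimodule description $\mathcal{O}(R) \cong \bigoplus_\alpha (V_\alpha)^\vee \boxtimes V_\alpha$; your Clebsch--Gordan phrasing plays the same role. Where you diverge is in the verification: the paper simply lays out the operations (BGSV style) and treats the Hopf axioms as a computation left implicit, with the substantial content deferred to Proposition \ref{framed objects}, which establishes $Spec(\chi_C) \cong \mathcal{U}$ by comparing $H^1$ and $H^2$ of the associated Lie algebras. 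You propose instead to verify the Hopf axioms by transport along a matrix-coefficient map $\Phi: \chi_C \to \mathcal{O}(U)$.

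The gap is in that last step. For the transport argument to apply you need $\Phi$ to be an isomorphism, and you assert this follows from ``a direct check'' plus ``Tannakian reconstruction.'' The direct check handles only that $\Phi$ is well defined on equivalence classes (morphisms compatible with framings give the same matrix coefficient). The Tannakian-reconstruction parenthetical addresses only surjectivity. What is missing is injectivity of the descended map: that two non-equivalent framed objects yield distinct matrix coefficients in $\mathcal{O}(U)$. This is exactly the delicate point that the framed-object equivalence relation is engineered to capture, and it is essentially the content of Proposition \ref{framed objects}, which the paper proves by a different route (conilpotence of $\chi_C$, the $\chi_C$-comodule functor, and the $H^1/H^2$ Lie algebra comparison). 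As written, your proof of the Claim presupposes the isomorphism that the whole section is building towards. You would need either to verify the Hopf axioms directly from your explicit formulas (closer to the paper's implicit strategy) or to supply the injectivity argument independently, which is genuinely nontrivial.
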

In order to explain our claim, first we recall a fundamental result -- Proposition 3.1 in \cite{H}. If $(V_{\alpha})_{\alpha}$ is a set of irreducible rational $R$-representations, viewed right $R$-modules, then, as an $(R, R)$ bimodule, $\ml{O}(R)$ is canonically isomorphic to $\bigoplus_{\alpha} (V_{\alpha})^{\vee} \boxtimes V_{\alpha}.$ In other words, there exists a Hopf algebraic structure on $\bigoplus_{\alpha} (V_{\alpha})^{\vee} \boxtimes V_{\alpha}.$ If we denote the generator of $(V_{\alpha})^{\vee} \boxtimes V_{\alpha}$ by $v_{\alpha}^{\vee} \boxtimes v_{\alpha}$, then we have:
\begin{enumerate}
\item[(1).]
$m((v_{\alpha}^{\vee} \boxtimes v_{\alpha}) \otimes (v_{\beta}^{\vee} \boxtimes v_{\beta})) = \oplus_{\gamma} (v_{\gamma}^{\vee} \boxtimes v_{\gamma})$, where $m$ is the multiplication of the Hopf algebra and the index set runs through the irreducible representations $V_{\gamma}$ appearing in the tensor product of $V_{\alpha} \otimes V_{\beta}$.
\item[(2).] \label{coproduct}
Let $\Delta$ be the coproduct map of the Hopf algebra $\bigoplus_{\alpha} (V_{\alpha})^{\vee} \boxtimes V_{\alpha}.$ Then we can express $$\Delta(v_{\alpha}^{\vee} \boxtimes v_{\alpha}) = \oplus_{\beta, \gamma} m^{\beta, \gamma}_{\alpha}((v_{\beta})^{\vee} \boxtimes v_{\beta}) \otimes  ((v_{\gamma})^{\vee} \boxtimes v_{\gamma}),$$
where $m^{\beta, \gamma}_{\alpha}$ is the corresponding multiplicity.
\end{enumerate}
Now we move to our claim.
\begin{itemize}
\item
The sum on $\chi_C$ is the Baer sum. 
\item
The product is defined by the tensor product of underlying objects together with the tensor product of the framings. Let $(X_1, u_1, v_1), (X_2, u_2, V_2)$ be two framed objects, which represent two classes $[(X_1, u_1, v_1)]$ and $ [(X_2, u_2, v_2)]$ in $\chi_C(V_{\alpha})$ and $\chi_C(V_{\beta})$ respectively. Then we define:
\begin{equation}
[(X_1, u_1, v_1)] \cdot [(X_2, u_2, v_2)] = [(X_1 \otimes X_2, v_1 \otimes v_2, v_1 \otimes v_2)] \in \chi_C(V_{\alpha} \otimes V_{\beta}),
\end{equation}
Then one can project this element into pieces $\chi_C(V_{\gamma})$, where  the irreducible representations $V_{\gamma}$ appearing in the tensor product of $V_{\alpha} \otimes V_{\beta}$.
\item
The coproduct 
\begin{equation}
\begin{split}
&\psi = \oplus_{\alpha} \psi_{\alpha}; \\
&\psi_{\alpha} = \oplus  \psi_{\alpha}^{\beta, \gamma};\\
& \psi^{\beta, \gamma}_{\alpha}: \chi_C(V_{\alpha}) \to \chi_C(V_{\beta}) \otimes \chi_C(V_{\gamma}),
\end{split}
\end{equation} 
where the first index set $\alpha$ runs over all irreducible representations $V_{\alpha}$ and the second index set is the same as the index set appearing in the coproduct (\ref{coproduct}), is defined as follows. We let $[(M, u, v)] \in \chi_C(V_{\alpha})$ and let
\begin{equation}
\sum x_i \otimes y_i = 1 \in Hom_R(H, V_{\gamma}) \otimes Hom_R(V_{\gamma}, H).
\end{equation}
Here $1$ corresponds the identity map under the isomorphism: $$Hom_R(H, V_{\gamma}) \otimes Hom_R(V_{\gamma}, H) \cong End(Hom_R(V_{\gamma}, H)).$$ Let $M_{x_i}$ be $M$ with frame vector $u$ and frame covector $x_i$ and let $M_{y_i}$ be $M$ with frame vector $y_i$ and frame covector $v$. Then we put
\begin{equation}
\psi^{\beta, \gamma}_{\alpha}([(M, u, v)]) = \sum [(M, u, x_i)] \otimes [(M, y_i, v)].
\end{equation}
\end{itemize}
\begin{prop} \label{framed objects}
Let $C$ be a split neutral Tannakian category with a tensor generator. Then $C$ is equivalent to the category of finite dimensional $\chi_C$-comodules in $Rep(R)$.
\end{prop}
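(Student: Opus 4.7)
The plan is to combine Tannakian duality with an explicit matrix-coefficient construction. By Remark \ref{tann fun} the Tannakian fundamental group of $C$ has the form $\ml{G}_C = U \rtimes R$ with $U$ prounipotent, so standard Tannakian duality gives $C \simeq Rep(U \rtimes R)$. Giving a representation of $U \rtimes R$ is the same as giving a finite-dimensional comodule in $Rep(R)$ over the Hopf algebra $\ml{O}(U)$ (with $R$ acting on $\ml{O}(U)$ by conjugation). Consequently the proposition reduces to identifying $\chi_C$ with $\ml{O}(U)$ as Hopf algebra objects in $Rep(R)$, compatibly with the evident coactions on objects of $C$.

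For this I would introduce a matrix-coefficient map $\Phi : \chi_C \to \ml{O}(U)$. Given a framed object $(M, u, v)$ with $u \in Hom_R(V, M)$ and $v \in Hom_R(M, \mb{Q})$, let $\delta_M : M \to \ml{O}(U) \otimes M$ be the $U$-coaction on $M$. Set
$$\Phi(M, u, v) = (1_{\ml{O}(U)} \otimes v) \circ \delta_M \circ u \in Hom_R(V, \ml{O}(U)),$$
which lands in the $V$-isotypic component of $\ml{O}(U)$ viewed as an $R$-representation. Naturality of $\delta$ together with the compatibilities $u_2 = f \circ u_1$ and $v_1 = v_2 \circ f$ for a morphism of framed objects show that $\Phi$ is constant on the relation $\ml{R}$, and hence descends to $\chi_C$.

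Next I would verify that $\Phi$ is a morphism of Hopf algebras in $Rep(R)$. Compatibility with the Baer sum is formal, and compatibility with the product of framings follows because $\delta_{M_1 \otimes M_2}$ is the tensor product of the coactions post-composed with the multiplication on $\ml{O}(U)$, together with the decomposition of $V_\alpha \otimes V_\beta$ into the irreducible summands $V_\gamma$. For the coproduct, inserting the expansion $\sum x_i \otimes y_i = 1$ in $Hom_R(M, V_\gamma) \otimes Hom_R(V_\gamma, M)$ into the iterated coaction $(1 \otimes \delta_M) \circ \delta_M = (\Delta \otimes 1) \circ \delta_M$ reproduces precisely the formula
$$\psi^{\beta,\gamma}_\alpha([(M, u, v)]) = \sum [(M, u, x_i)] \otimes [(M, y_i, v)].$$
Surjectivity of $\Phi$ is then immediate from the existence of a tensor generator $T \in C$: every irreducible representation of $U \rtimes R$ is a subquotient of some $T^{\otimes n}$, and the matrix coefficients of such representations span $\ml{O}(U)$.

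The main obstacle will be injectivity. If $\Phi(M, u, v) = 0$, following the BGSV and Bloch-Kriz strategy I would let $M^\circ \subseteq M$ be the smallest $U$-subobject containing $u(V)$ and let $N \subseteq M^\circ$ be the largest $U$-subobject contained in $\ker v$; the vanishing of the matrix coefficient forces $N = M^\circ$, and then the zigzag $M \hookleftarrow M^\circ \twoheadrightarrow M^\circ / N = 0$ of morphisms of framed objects identifies $[(M, u, v)]$ with zero in $\chi_C$. The subtlety is that each of these constructions must be carried out $R$-equivariantly so that the subobjects and quotients remain framed objects in the sense of our definition, and this is where the split hypothesis on $C$ is essential: it supplies the $R$-equivariant isotypic decomposition of $M$ needed to make these reductions compatible with the grading of $\chi_C$ by irreducible $R$-representations. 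Once injectivity is established, $\Phi$ is an isomorphism of Hopf algebra objects in $Rep(R)$, and transporting comodule categories across it yields the stated equivalence.
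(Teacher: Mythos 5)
Your proposal is correct in spirit but takes a genuinely different route from the paper's. Both proofs start by identifying $C$ with $Rep(\ml{U} \rtimes R)$ and reducing to a statement about the prounipotent part, but they diverge in how they compare $\chi_C$ to the coordinate ring (or Lie algebra) of $\ml{U}$. The paper works at the level of functors and cohomology: it constructs a functor $\Phi: C \to (\chi_C\text{-comodules in } Rep(R))$ by decomposing $M \cong \bigoplus_\alpha Hom_C(V_\alpha, M) \otimes V_\alpha$ and reading off framed classes, observes that this induces (via Tannakian duality) a morphism of prounipotent groups $\ml{U}_C = Spec(\chi_C) \to \ml{U}$, and then shows the induced map on Lie algebras $\mf{u}_C \to \mf{u}$ is an isomorphism by checking that it matches the identification $H^i(\ml{U}) \cong \bigoplus_\alpha Ext^i_C(\mathbf{1}, V_\alpha^\vee) \otimes V_\alpha$ of Hain--Matsumoto and then invoking Hain's cohomological criterion (iso on $H^1$, injection on $H^2$ $\Rightarrow$ Lie algebra isomorphism). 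You instead build a Hopf-algebra map $\chi_C \to \ml{O}(\ml{U})$ directly via matrix coefficients and prove bijectivity by hand: surjectivity from the Peter--Weyl style density of matrix coefficients, and injectivity from the BGSV/Bloch--Kriz zigzag that shrinks a framed object with vanishing matrix coefficient to zero. The cohomological route buys a short argument given the Hain--Matsumoto machinery, and the paper leans on it exactly to sidestep the explicit injectivity computation; your route is more elementary and self-contained, and has the side benefit of actually verifying the Hopf-algebra compatibility of $\chi_C$ (which the paper asserts in Claim \ref{claim} without proof). The one place where you should add a sentence is injectivity: the matrix-coefficient map is linear, so you need to reduce the kernel computation to a single framed object; this follows because the Baer sum of two framed objects is again represented by a single framed object, so every element of $\chi_C$ has such a representative. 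You should also note explicitly that $M^\circ$ (the $\ml{U}$-subobject generated by $u(V)$) and $N$ (the largest $\ml{U}$-subobject of $M^\circ$ inside $\ker v$) are automatically $R$-stable --- the former because $R$ normalizes $\ml{U}$ and $u(V)$ is $R$-stable, the latter by maximality applied to $rNr^{-1}$ --- so they are genuine subobjects in $C$; your remark that this is "where the split hypothesis is essential" is on the right track but should be made precise in these terms.
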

\begin{proof}
Step 1.  As we state in Remark \ref{tann fun}, $C$ is isomorphic to the category of representations over a linear proalgebraic group $\ml{G}$, which is a semi-product of a prounipotent algebraic group $\ml{U}$ with $R$. Recall the following basic properties about extension groups in $C$ or the category of finite dimensional $\ml{G}$-representations (Section 5.1 in \cite{HM1}):
\begin{enumerate}
\item[A.] 
The category of $\ml{G}$-modules has enough injectives, therefore the cohomology groups can be identified with the extension groups:
$$H^i(\ml{G}, V) = Ext^i_{\ml{G}-mod}(\mb{Q}, V)$$
for $V \in \ml{G}-mod$ and $\mb{Q}$ is the trivial object or the trivial representation. If $V$ is the trivial representation $\mb{Q}$, we will denote $H^i(\ml{G}, V) $ by $H^i(\ml{G})$ for short. When $V \in C$, we have:
$$H^i(\ml{G}, V) = Ext^i_{C}(\mb{Q}, V),$$
because each element in the extension class can be replaces by an equivalent extension consisting of only finite dimensional representations.
\item[B.]
We consider $V \in Rep(R)$ as a $\ml{G}$-representation. Then we have isomorphism in $Rep(R)$ (Theorem 5.3 in \cite{HM1}):
\begin{equation}  \label{q}
H^i(\ml{U}) \cong  \bigoplus_{\alpha} H^i(\ml{G}, (V_{\alpha})^{\vee}) \otimes V_{\alpha} \cong  \bigoplus_{\alpha} Ext^i_C(1, (V_{\alpha})^{\vee}) \otimes V_{\alpha},
\end{equation} 
where the index set runs over all irreducible $R$-representations.
\end{enumerate}

Step 2. Notice that the coproduct on $\chi_C$ is conilpotent. For the definition of a conilpotent coproduct, we refer to Section 3.8 in \cite{C}. Using Theorem 3.9.1 in \cite{C}, we know that $\ml{U}_C$, which is defined to be $Spec(\chi_C)$, is a prounipotent algebraic group over $\mb{Q}$.
\

\noindent Step 3. Now we want to construct a functor from $C$, which can be identified as the category of representations over $\ml{G}$, to the category of finite dimensional $\chi_C$-comodules (the same as $\ml{U}_C$ modules) in $Rep(R)$. Because $Rep(R)$ is a full subcategory of $C$ as we assume, we also consider $V \in Rep(R)$ as an element in $C$. Let $M \in C$. Notice that $M \cong \bigoplus_{\alpha} Hom_C(V_{\alpha}, M) \otimes V_{\alpha}$. Under this isomorphism, any $m \in M$ can be decomposed as the sum of the form $u_{\alpha} \otimes m_{\alpha} \in Hom_R(V_{\alpha}, M) \otimes V_{\alpha}$. Then we define a $\chi_C$-comodule structure on $M$ by:
\begin{equation}
M \xrightarrow{\varphi_M} \chi_C \otimes M: m \to \oplus_{\alpha} [(M, u_\alpha, 0)] \otimes m_{\alpha}.
\end{equation} 
We denote the functor from $C$ to the category of $\chi_C$-comodules in $Rep(R)$ by $\Phi$. Then under Tannakian duality, $\Phi$ induces a group homomorphism from $\ml{U}_C$ to $\ml{U}$.
\

\noindent Step 4. By the definition of $\chi_C$, we may check:
\begin{itemize} 
\item
$H^1(\ml{U}_C) \cong \bigoplus_{\alpha} Ext^1_C(1, (V_{\alpha})^{\vee}) \otimes V_{\alpha}$ and $H^2(\ml{U}_C) \cong \bigoplus_{\alpha} Ext^2_C(1, (V_{\alpha})^{\vee}) \otimes V_{\alpha}.$
\item
Furthermore, $\Phi$ induces a map from $H^i(\ml{U}) \to H^i(\ml{U}_{C})$, which are coincide with (\ref{q}).
\end{itemize}
If we denote the corresponding Lie algebra of $\ml{U}_C$ and $\ml{U}$ by $\mf{u}_C$ and $\mf{u}$, using Proposition 2.1 in \cite{HM1} again, we know that: $\Phi: \mf{u}_C \to \mf{u}$ is a $R$-equivariant isomorphism between pronilpotent Lie algebras. Therefore, $\Phi$ induces an equivalence of Tannkian categories between $C$ and the category of finite dimensional $\chi_C$-comodules in $Rep(R)$.
\end{proof}
\begin{exe}
Let $F$ be a field finitely generated over a prime field and let $l$ be a prime number. A mixed Tate representation of the absolute Galois group $Gal(\bar{F}/F)$ over $F$ is a filtered representation such that the odd graded pieces are $0$ and the $(2r)$-th graded piece is a sum of copies of $\mb{Q}_l(-r)$. Then the category of mixed Tate representations of $Gal(\bar{F}/F)$, denoted by $MTM_{F, l}$, is a split neutral Tannakian category over $\mb{G}_m$. Using Proposition \ref{framed objects}, we know that $MTM_{l, F}$ is isomorphic to the category of finite dimensional $\chi_{MTM_{l, F}}$ comodules in $Rep(\mb{G}_m)$. The latter fact has been shown in \cite{BGSV}.
\end{exe}
\begin{exe}
Let $F$ be a number field, then the abelian category $MTM(F, \mb{Q})$ of mixed Tate motives with rational coefficients over $F$ exists. See \cite{L} for example. In fact,  $MTM(F, \mb{Q})$ is also a split neutral Tannakian category over $\mb{G}_m$. One may use Proposition \ref{framed objects} to describe such category. Moreover, the full rigid tensor subcategory of the abelian category of mixed motives generated by the motive of a fixed smooth projective variety is conjecturally a split neutral Tannakian categories over some reductive group. 
\end{exe}

\bigskip

Jin Cao, Yau Mathematical Sciences Center, Tsinghua University, Beijing, China,
\

\textit{E-mail address}: jcao@math.tsinghua.edu.cn

\end{document}